\documentclass[11pt]{amsart}
%
%
%

%
%

\usepackage{graphicx,rlepsf,latexsym,amssymb}

\usepackage[all]{xy}
\CompileMatrices

%
%

\setlength{\voffset}{-1cm}
\setlength{\hoffset}{-1cm}
\addtolength{\textwidth}{2cm}
\addtolength{\textheight}{2cm}

\begin{document}
%
%
%
%
%
%

%
%

\theoremstyle{plain}
\newtheorem{theorem}{Theorem}[section]
\newtheorem{lemma}[theorem]{Lemma}
\newtheorem{proposition}[theorem]{Proposition}
\newtheorem{corollary}[theorem]{Corollary}
\newtheorem{conjecture}[theorem]{Conjecture}

\theoremstyle{definition}
\newtheorem{definition}[theorem]{Definition}
\newtheorem{question}[theorem]{Question}
\newtheorem{example}[theorem]{Example}
\newtheorem{remark}[theorem]{Remark}
\newtheorem{summary}[theorem]{Summary}
\newtheorem{notation}[theorem]{Notation}
\newtheorem{problem}[theorem]{Problem}

\theoremstyle{remark}
\newtheorem{claim}[theorem]{Claim}
\newtheorem{sublemma}[theorem]{Sub-lemma}
\newtheorem{innerremark}[theorem]{Remark}

\numberwithin{equation}{section}
\numberwithin{figure}{section}

%
%

\renewcommand{\labelitemi}{$\centerdot$}

%
%

\def \C{\mathbb{C}}
\def \Z{\mathbb{Z}}
\def \N{\mathbb{N}}
\def \Q{\mathbb{Q}}
\def \K{\mathbb{K}}

\def \Cob{\mathcal{C}ob}
\def \Cyl{\mathcal{C}}

\def \F{\mathcal{F}}
\def \I{\mathcal{I}}
\def \M{\mathcal{M}}
\def \P{\mathcal{P}}
\def \T{\mathcal{T}}

\def \Ztilde{\widetilde{Z}}
\def \ZtildeY{\widetilde{Z}^{Y}}

\def \osqcup{\hphantom{}^<_\sqcup}

\def \longrightleftarrows{\begin{array}{c}\longrightarrow \\[-0.2cm] \longleftarrow \end{array}}

\def \Char{\operatorname{Char}}
\def \Coker{\operatorname{Coker}}
\def \deg{\operatorname{deg}}
\def \ideg{\operatorname{i\hbox{-}deg}}
\def \End{\operatorname{End}}
\def \GL{\operatorname{GL}}
\def \Gr{\operatorname{Gr}}
\def \Hom{\operatorname{Hom}}
\def \incl{\operatorname{incl}}
\def \Id{\operatorname{Id}}
\def \Img{\operatorname{Im}}
\def \Ker{\operatorname{Ker}}
\def \Lie{\operatorname{Lie}}
\def \mod{\operatorname{mod}}
\def \ord{\operatorname{ord}}
\def \pr{\operatorname{pr}}
\def \rk{\operatorname{rk}}
\def \sgn{\operatorname{sgn}}
\def \Sp{\operatorname{Sp}}
\def \Tors{\operatorname{Tors}}
\def \ud{\operatorname{d}}
\def \GLike{\operatorname{G}}
\def \Prim{\operatorname{P}}

\def \sp{\mathfrak{sp}}

\newcommand{\set}[1]{\lfloor #1\rceil}
\newcommand{\Star}[3]{\stackrel{{\scriptsize #1},{\scriptsize #2}}{\star}}

\newcommand{\figtotext}[3]{\begin{array}{c}\includegraphics[width=#1pt,height=#2pt]{#3}\end{array}}

\newcommand{\thetagraph}
{\hspace{-0.2cm} \figtotext{12}{12}{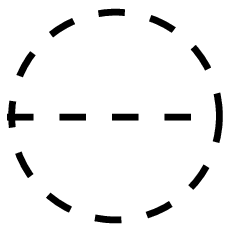} \hspace{-0.2cm}}

\newcommand{\strutgraph}[2]
{\begin{array}{c} \\[-0.2cm] \!\! {\relabelbox \small
\epsfxsize 0.07truein \epsfbox{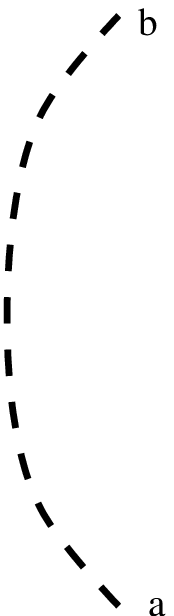}
\adjustrelabel <0.0cm,-0cm> {a}{\scriptsize $#1$}
\adjustrelabel <0.05cm,-0cm> {b}{\scriptsize $#2$}
\endrelabelbox} \end{array}}

\newcommand{\strutgraphbot}[2]
{\begin{array}{c} \\[-0.2cm] \!\! {\relabelbox \small
\epsfxsize 0.3truein \epsfbox{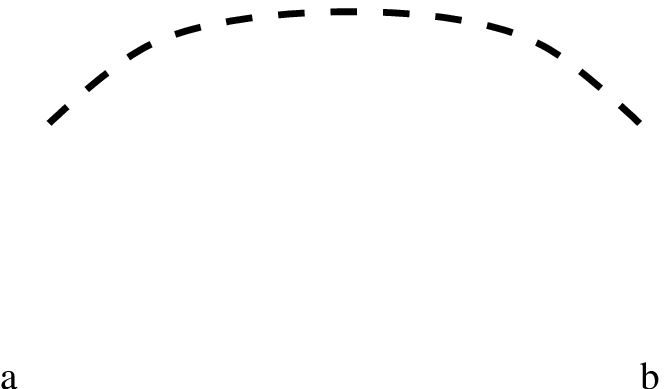}
\adjustrelabel <0cm,0.05cm> {a}{\scriptsize $#1$}
\adjustrelabel <0cm,0.05cm> {b}{\scriptsize $#2$}
\endrelabelbox} \end{array}}

\newcommand{\Hgraph}[4]
{\begin{array}{c} \\[-0.2cm] \!\! {\relabelbox \small
\epsfxsize 0.2truein \epsfbox{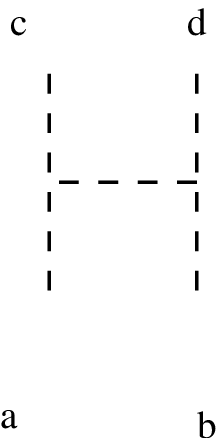}
\adjustrelabel <0cm,0.05cm> {a}{\scriptsize $#1$}
\adjustrelabel <0cm,0.05cm> {b}{\scriptsize $#2$}
\adjustrelabel <0cm,0.0cm> {c}{\scriptsize $#3$}
\adjustrelabel <0cm,0.0cm> {d}{\scriptsize $#4$}
\endrelabelbox} \end{array}}

\newcommand{\phigraphtop}[2]
{\begin{array}{c} \\[-0.2cm] \!\! {\relabelbox \small
\epsfxsize 0.25truein \epsfbox{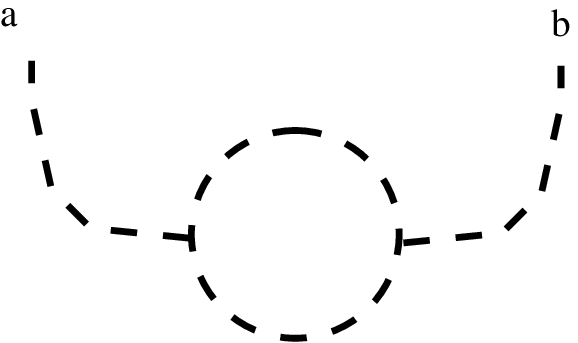}
\adjustrelabel <0cm,0.05cm> {a}{\scriptsize $#1$}
\adjustrelabel <0cm,0.1cm> {b}{\scriptsize $#2$}
\endrelabelbox} \end{array}}

\newcommand{\Ygraphtop}[3]
{\begin{array}{c} \\[-0.2cm] \!\! {\relabelbox \small
\epsfxsize 0.3truein \epsfbox{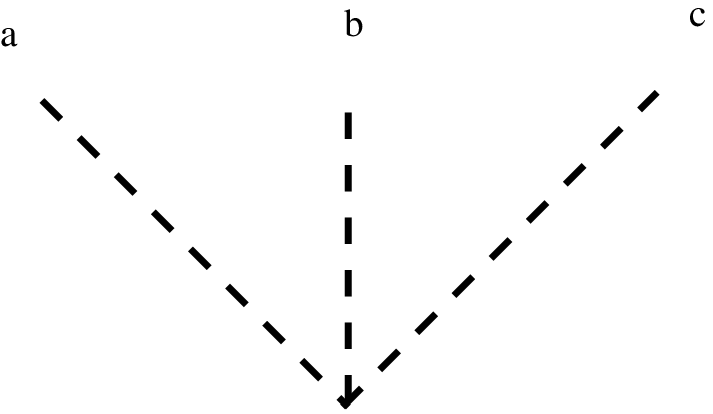}
\adjustrelabel <-0cm,-0cm> {a}{\scriptsize $#1$}
\adjustrelabel <-0cm,-0cm> {b}{\scriptsize $#2$}
\adjustrelabel <-0cm,-0cm> {c}{\scriptsize $#3$}
\endrelabelbox} \end{array}}

\newcommand{\Ygraphbottoptop}[3]
{\begin{array}{c} \\[-0.2cm] \!\! {\relabelbox \small
\epsfxsize 0.2truein \epsfbox{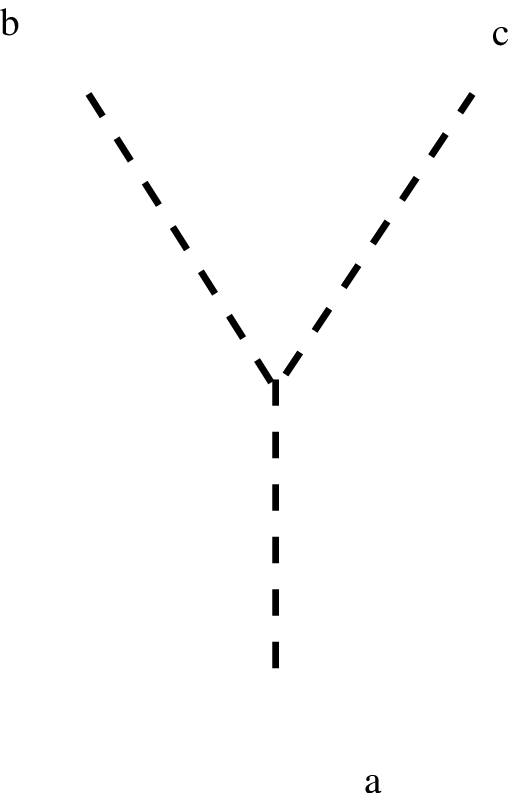}
\adjustrelabel <0cm,-0cm> {a}{\scriptsize $#1$}
\adjustrelabel <0cm,-0cm> {b}{\scriptsize $#2$}
\adjustrelabel <0cm,-0cm> {c}{\scriptsize $#3$}
\endrelabelbox} \end{array}}

\newcommand{\Ygraphbotbottop}[3]
{\begin{array}{c} \\[-0.2cm] \!\! {\relabelbox \small
\epsfxsize 0.3truein \epsfbox{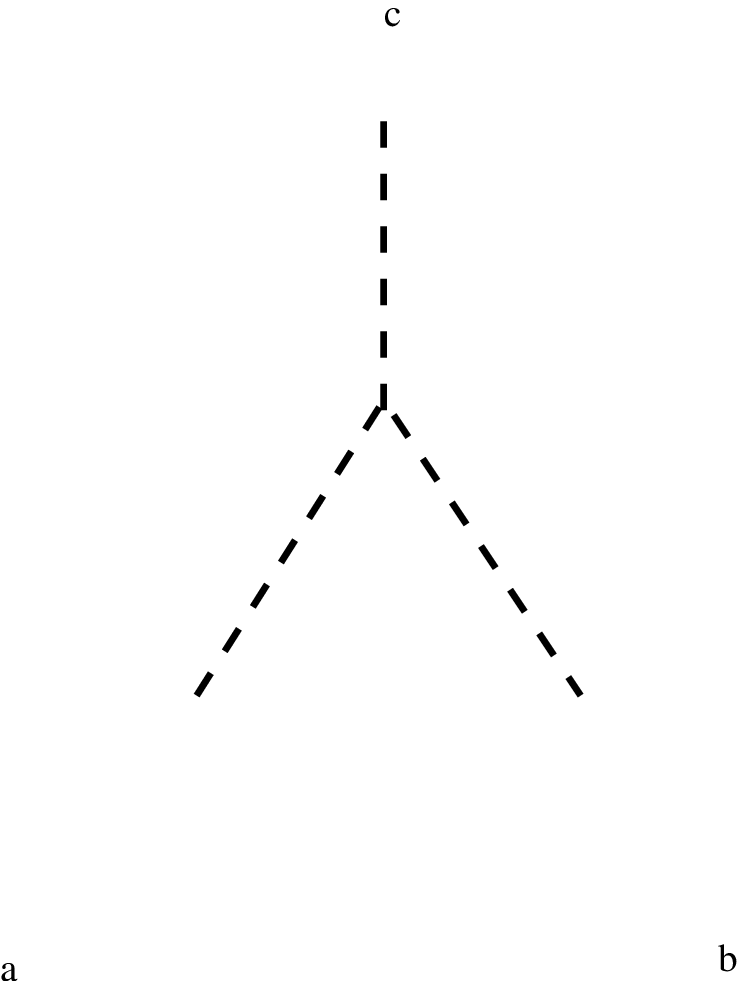}
\adjustrelabel <0cm,-0cm> {a}{\scriptsize $#1$}
\adjustrelabel <0cm,-0cm> {b}{\scriptsize $#2$}
\adjustrelabel <0cm,-0cm> {c}{\scriptsize $#3$}
\endrelabelbox} \end{array}}

\newcommand\nc{\newcommand}

\nc\mapcyl{\mathbf{c}}
\nc\mapcylhat{\widehat{\mapcyl}}

\nc\sgg{\Sigma_{g,1}}
\nc\tsgg{\tilde\Sigma_{g,1}}
\nc\sg{\Sigma_g}
\nc\Igg{\I_{g,1}}
\nc\Ig{\I_g}
\nc\Igghat{\widehat{\I}_{g,1}}
\nc\Mgg{\mathcal{M}_{g,1}}
\nc\Cgg{\Cyl_{g,1}}
\nc\Cg{\Cyl_{g}}
\nc\Cgghat{\widehat{\Cyl}_{g,1}}
\nc\LCob{\mathcal{L}\mathcal{C}ob}

\nc\bfi {\mathbf i}

\nc\Yhat{\widehat{Y}}

\nc\oQ{\otimes\Q}

\nc\LMO{\operatorname{LMO}}

\nc\simeqto{\overset{\simeq}{\longrightarrow}}
\nc\xto[1]{\overset{#1}{\longrightarrow}}

\nc\glim{\varinjlim_g}

\nc\og{$(\omega ,<)$}
\nc\omegaQ{\omega_\Q}
\nc\HQ{H_\Q}
\nc\LHQ{\Lambda^3\HQ}
\nc\SpHQ{\Sp(\HQ)}
\nc\sHQ{\left(\HQ\right)}

\nc\A{\mathcal{A}}
\nc\AY{\mathcal{A}^Y}
\nc\AYc{\mathcal{A}^{Y,c}}
\nc\AgHQ{\A^<}
\nc\AgcHQ{\A^{<,c}}
\nc\AtildeHQ{\widetilde{\A}}
\nc\AHQ{\A}
\nc\AcHQ{\A^c}
\nc\Ac[1]{\A^c_{#1}}
\nc\Ag[1]{\A^<_{#1}}
\nc\Agc[1]{\A^{<,c}_{#1}}
\nc\tsA{{}^{ts}\negthinspace\negthinspace\A}
\nc\cI{\check I}

\nc\Aut{\operatorname{Aut}}
\nc\Grp{\operatorname{Grp}}
\nc\bbI{\mathbb I}
\nc\bbX{\mathbb X}
\nc\calT{\mathcal T}
\nc\bbR{\mathbb R}
\nc\ev{\operatorname{ev}}
\nc\od{\operatorname{od}}

\nc\Span{\operatorname{Span}}

\nc\MalcevGroup[1]{\mathcal{G}(#1)}
\nc\MalcevLie[1]{\mathcal{P}(#1)}

\title[]{Symplectic Jacobi diagrams and\\ the Lie algebra of homology cylinders}

\date{July 6, 2009}

\author[]{Kazuo Habiro}
\address{Research Institute for Mathematical Sciences, Kyoto University, Kyoto 606-8502, Japan}
\email{habiro@kurims.kyoto-u.ac.jp}

\author[]{Gw\'ena\"el Massuyeau}
\address{Institut de Recherche Math\'ematique Avanc\'ee, Universit\'e de Strasbourg \& CNRS,
7 rue Ren\'e Descartes, 67084 Strasbourg, France}
\email{massuyeau@math.u-strasbg.fr}

\keywords{$3$-manifold, monoid of homology cylinders, Torelli group, 
finite-type invariant, Jacobi diagram, clasper, LMO invariant, Malcev completion, Malcev Lie algebra}

\subjclass[2000]{57M27, 57R50, 20F12, 20F38, 20F40}

\begin{abstract}
  Let $S$ be a compact connected oriented surface, whose boundary is connected or empty.
  A homology cylinder over the surface $S$ is  a cobordism between $S$ and itself,
  homologically equivalent to the cylinder over $S$.  
  The $Y$-filtration on the monoid of homology   cylinders over $S$ is defined by clasper surgery.
  Using a functorial extension of the Le--Murakami--Ohtsuki invariant, we show
  that the graded Lie algebra associated to the $Y$-filtration is
  isomorphic to the Lie algebra of ``symplectic Jacobi  diagrams''.  
  This Lie algebra consists of the primitive elements of a certain Hopf algebra 
  whose multiplication is a diagrammatic analogue of the Moyal--Weyl product.

  The mapping cylinder construction embeds the Torelli group into the
  monoid of homology cylinders, sending the lower central series to
  the $Y$-filtration.  We give a combinatorial description of the
  graded Lie algebra map induced by this embedding, by connecting
  Hain's infinitesimal presentation of the Torelli group to the Lie
  algebra of symplectic Jacobi diagrams.  This Lie algebra map is
  shown to be injective in degree two, and the question of the
  injectivity in higher degrees is discussed.
\end{abstract}

\maketitle

\setcounter{tocdepth}{1}
{\small \tableofcontents}

\section{Introduction and statement of the results}

\label{sec:intro}

Let $\sgg$ be a compact connected oriented surface of genus $g$ with one boundary component. 
The first homology group $H_1(\sgg;\Z)$ is denoted by $H$ and is equipped 
with the intersection pairing 
$$
\omega: H \otimes H \longrightarrow \Z.
$$
This is a non-degenerate skew-symmetric form, the group of isometries of which is denoted by $\Sp(H)$.
Similarly, $H_\Q:= H \oQ$ is equipped with the rational extension of $\omega$ 
and $\Sp(H_\Q)$ denotes the group of isometries of the symplectic vector space $H_\Q$. 

\subsection{The Torelli group}

Let $\Igg$ be the {\em Torelli group} of $\sgg$, which is the
subgroup of the mapping class group $\Mgg$ of $\sgg$
consisting of the elements acting trivially on homology.  
A good introduction to the Torelli group is found in Johnson's survey \cite{Johnson_survey}.  

{\em Commutator calculus} is one of the most important tools in the study of the Torelli group. 
The group $\Igg$ is filtered by its lower central series
$$
\Igg = \Gamma_1 \Igg \supset \Gamma_2 \Igg \supset \Gamma_3 \Igg \supset \cdots.
$$
The \emph{pronilpotent completion} of $\Igg$ is
$$
\Igghat := \varprojlim_i \Igg/ \Gamma_i \Igg
$$
and the canonical map $\Igg \to \Igghat$ is injective.
The graded Lie algebra over $\Z$ associated to the lower central series of $\Igg$, namely
$$
\Gr^\Gamma \Igg = \bigoplus_{i\geq 1} \Gr^\Gamma_i\Igg,
\quad \hbox{where } \Gr^\Gamma_i \Igg := {\Gamma_i \Igg}/{\Gamma_{i+1} \Igg},
$$
is called the \emph{Torelli Lie algebra} of $\sgg$.
With rational coefficients, the Torelli Lie algebra
$$
\Gr^\Gamma \Igg \oQ
$$ 
is, by a general fact, canonically isomorphic to the graded Lie algebra 
associated to the complete lower central series of the Malcev Lie algebra of $\Igg$.

The Torelli Lie algebra with rational coefficients 
is generated by its degree $1$ part $\Gr_1^\Gamma\Igg\oQ$.
If $g\geq 3$, this vector space can be identified with $\LHQ$
by extending the first Johnson homomorphism
\begin{gather*}
  \tau_1\colon \Igg \longrightarrow \Lambda^3 H
\end{gather*}
to rational coefficients \cite{Johnson}. Hence a Lie algebra epimorphism 
$$
J: \Lie(\Lambda^3 H_\Q) \longrightarrow \Gr^\Gamma\Igg\oQ,
$$
where $\Lie(\LHQ)$ is the free Lie algebra over $\LHQ$.
The ideal of \emph{relations} of the Torelli Lie algebra is
$$
\hbox{R}\left(\Igg\right)=\bigoplus_{i\ge1}\hbox{R}_i\left(\Igg\right):=\operatorname{ker}(J).
$$
Let $\overline{J}: \Lie(\Lambda^3 H_\Q)/ \hbox{R}\left(\Igg\right) \to \Gr^\Gamma\Igg\oQ$
be the isomorphism induced by $J$.
The following theorem is proved by Hain in \cite{Hain}.

\begin{theorem}[Hain]
\label{th:Hain}
If $g\ge3$, then the Malcev Lie algebra of $\Igg$ is isomorphic 
to the completion of $\Gr^\Gamma\Igg\oQ$.
Moreover, the ideal ${\rm{R}}\left(\Igg\right)$ 
is generated by  ${\rm{R}}_2\left(\Igg\right)$
for $g\geq 6$, and by ${\rm{R}}_2\left(\Igg\right)+
{\rm{R}}_3\left(\Igg\right)$ for $g=3,4,5$.
\end{theorem}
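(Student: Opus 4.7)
The plan is to follow Hain's approach, which applies mixed Hodge theory to the relative Malcev completion of $\Mgg$. The Torelli group is the kernel of the symplectic representation $\rho \colon \Mgg \to \Sp(H)$, so one first forms the relative Malcev completion $\mathcal{G}$ of $\Mgg$ with respect to $\rho$: a proalgebraic $\Q$-group fitting in an extension
\[ 1 \longrightarrow \mathcal{U} \longrightarrow \mathcal{G} \longrightarrow \Sp(\HQ) \longrightarrow 1 \]
with $\mathcal{U}$ prounipotent. A preliminary step, using that $\Sp(\HQ)$ is reductive and acts semisimply on the Torelli Lie algebra, identifies the classical Malcev completion of $\Igg$ with $\mathcal{U}$. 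The task thus reduces to describing $\mathfrak{u} := \Lie(\mathcal{U})$.

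Since $\Mgg$ is, up to a finite cover, the topological fundamental group of a smooth quasi-projective variety (a Torelli-type level cover of $\mathcal{M}_{g,1}$), Morgan's and Hain's refinements of Deligne's mixed Hodge theory equip $\mathfrak{u}$ with a canonical mixed Hodge structure compatible with the $\Sp(\HQ)$-action and the Lie bracket. Its associated weight filtration $W_\bullet \mathfrak{u}$ is the central object of study; the graded pieces are polarizable pure Hodge structures carrying $\Sp$-actions, and the first Johnson homomorphism identifies $\Gr^W_{-1}\bigl(\mathfrak{u}^{\mathrm{ab}}\bigr)$ with $\LHQ$.

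The formality statement now follows from a purity argument: the generators of $\mathfrak{u}$, detected by $H^1(\Mgg; V)$ for appropriate symplectic local systems $V$, sit entirely in weight $-1$. Standard Malcev-theoretic arguments then force the weight filtration on $\mathfrak{u}$ to coincide (up to re-indexing) with the lower central series filtration, so $\mathfrak{u}$ is the completion of its associated graded $\Gr^\Gamma \Igg \oQ$. For the presentation statement, one identifies the minimal relations of $\mathfrak{u}$ with $H_2(\mathfrak{u}; \Q)$, analysed via the Hochschild--Serre spectral sequence for $\Igg \to \Mgg \to \Sp(H,\Z)$ together with Borel's calculation of the stable cohomology of arithmetic groups. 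In weight $-2$ one recovers the classical Morita--Johnson quadratic relations; potential further relations lie in weight $-3$ and are controlled by specific $\Sp$-isotypic components of $H^2$ with symplectic coefficients, which vanish for $g \geq 6$ but survive for $g = 3,4,5$.

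The main obstacle is the purity step: showing that all generators lie in weight $-1$, equivalently that $H^1(\Mgg; V)$ is pure of weight $1$ for the relevant local systems. This requires geometric input beyond the formal Hodge-theoretic machinery, namely control of the variation of Hodge structure on $H$ along the boundary of the Deligne--Mumford compactification of $\mathcal{M}_g$, together with Schmid's nilpotent orbit theorem and $\Sp$-equivariance of the period map to Siegel space. The remaining representation-theoretic computation that isolates the cubic relations in low genus is more routine, amounting to Littlewood--Richardson-type decompositions of tensor products of irreducible $\Sp(\HQ)$-modules.
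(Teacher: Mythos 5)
The paper does not actually prove this statement: it is quoted from Hain's article \cite{Hain} (the sentence immediately preceding it reads ``The following theorem is proved by Hain in \cite{Hain}''), so there is no in-paper argument to measure your proposal against. The only fragment the paper re-derives independently is the degree-two part of the presentation: in \S\ref{sec:quadratic} the authors compute the bracket $b_2$ on symplectic Jacobi diagrams and combine Lemma \ref{lem:ker_bracket} with the commutative diagram (\ref{eq:pre-diagram}) to obtain ${\rm R}_2(\Igg)=\langle r_1,r_2\rangle_{\SpHQ}$ for all $g\ge3$, see (\ref{e10}). That diagrammatic route is completely different from, and far more elementary than, the Hodge-theoretic one you describe, but it only recovers the quadratic relations; it says nothing about formality (the first half of the theorem) or about the degrees in which the ideal of relations is generated.

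Judged as a reconstruction of Hain's proof, your outline is broadly faithful to \cite{Hain} but has two problems. First, the ``preliminary step'' is wrong as stated: the Malcev completion of $\Igg$ is \emph{not} identified with the prounipotent radical $\mathcal{U}$ of the relative completion. Hain proves only that the natural map from the Malcev completion of $\Igg$ to $\mathcal{U}$ is surjective with central kernel (which turns out to be a copy of $\Q$ controlled by $H^2(\Sp_{2g}(\Z);\Q)$), and the surjectivity rests on the vanishing of $H^1$ of $\Sp_{2g}(\Z)$ with nontrivial algebraic coefficients rather than on semisimplicity of the $\SpHQ$-action; transporting the mixed Hodge structure and the presentation across this central extension is a substantive part of Hain's argument that your plan skips. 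Second, you explicitly defer the purity of the generators (equivalently, of $H^1(\Mgg;V)$ for the relevant local systems), which you correctly identify as the crux. With that step left open, the proposal is a roadmap of \cite{Hain} rather than a proof.
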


\noindent
The first half of Theorem \ref{th:Hain} implies that the Torelli Lie algebra
has all the information about the Malcev completion of $\Igg$.  
The second half implies that one obtains a
presentation of the Torelli Lie algebra by
computing the quadratic/cubic relations (see \cite{Hain,HS} for $g\geq 6$).

\subsection{The monoid of homology cylinders}

\emph{Homology cylinders} over $\sgg$ are cobordisms from $\sgg$ 
to itself with the same homology type as $\sgg \times [-1,1]$.
The set $\Cgg$ of homeomorphism types (relative to boundary parameterization) 
of homology cylinders is a monoid, with multiplication being the usual pasting operation of cobordisms.  
Homology cylinders are introduced in \cite{Goussarov,Habiro}, 
see also  \cite{Levine,GL_tree-level,Habegger,MM,Sakasai_cylinders,Massuyeau} where 
the terminology ``homology cylinder''  sometimes refers to a wider class of cobordisms.

{\em Calculus of claspers} \cite{Goussarov_clovers,Habiro} works as
``topological commutator calculus'' on the monoid $\Cgg$, where the
usual algebraic commutator calculus does not work.  The role of the
lower central series is played by the family of the $Y_i$-equivalence
relations.  For each $i\ge1$, the {\em $Y_i$-equivalence} on homology
cylinders is generated by surgeries along $Y_i$-claspers, which are
connected graph claspers with $i$ nodes.  
The $Y_i$-equivalence is also generated by {\em Torelli surgeries of class
$i$}, i.e$.$ surgeries along an embedding of the surface $\Sigma_{h,1}$
with any $h\ge0$ using any element of $\Gamma_i\mathcal{I}_{h,1}$ (see \cite{Habiro,Massuyeau}).  
The $Y_i$-equivalence becomes finer as $i$ increases.

For each $i\ge1$, the quotient monoid $\Cgg/Y_i$ is a finitely generated, 
nilpotent group, and there is a sequence of surjective
homomorphisms
\begin{gather*}
  \cdots \longrightarrow
  \Cgg/Y_3 \longrightarrow
  \Cgg/Y_2 \longrightarrow
  \Cgg/Y_1 = \{1\}.
\end{gather*}
The completion 
\begin{gather*}
  \Cgghat :=\varprojlim_i\Cgg/Y_i
\end{gather*}
is called the {\em group of homology cylinders}\footnote{The group 
$\Cgghat$ is different from the
{\em homology cobordism group} of homology cylinders introduced by
Levine \cite{Levine}, which is a non-trivial quotient of $\Cgg$.}.  
Conjecturally, the canonical homomorphism $\Cgg \to \Cgghat$
is injective.

Denoting by $Y_i\Cgg$ the submonoid of $\Cgg$ consisting of
homology cylinders which are $Y_i$-equivalent to the trivial cylinder, 
one obtains the \emph{$Y$-filtration}
\begin{equation}
\label{eq:Y-filtration}
\Cgg = Y_1\Cgg \supset  Y_2\Cgg \supset Y_3\Cgg \supset \cdots
\end{equation}
for the monoid $\Cgg$. The quotient monoid $Y_k \Cgg /Y_l$ is a subgroup of
$\Cgg/Y_l$ for all $l\ge k\ge 1$ and, furthermore, the inclusion 
\begin{equation}
\label{eq:N-series}
\left[\ Y_{j}\Cgg/Y_l\ ,\ Y_{k} \Cgg /Y_l\ \right] \subset Y_{j+k} \Cgg/Y_l
\end{equation}
is satisfied for all $j,k\geq 1$ and $l\geq j+k$.
Thus, there is a graded Lie algebra over $\Z$
$$
\Gr^Y \Cgg := \bigoplus_{i\geq 1} Y_i\Cgg /Y_{i+1},
$$
which we call the {\em Lie algebra of homology cylinders}.\\

As proposed in \cite{Habiro} and established in \cite{CHM}, 
there is a diagrammatic version of the Lie algebra $\Gr^Y\Cgg\oQ$.
Similar diagrammatic constructions have also been considered 
by Garoufalidis and Levine \cite{GL_tree-level} and by Habegger \cite{Habegger}.
Our diagrammatic description of $\Gr^Y\Cgg\oQ$ involves a graded Lie algebra of Jacobi diagrams
\begin{gather*}
  \A^{<,c}\sHQ=\bigoplus_{i\ge1}\A^{<,c}_i\sHQ.
\end{gather*}
Here, the vector space $\A^{<,c}_i\sHQ$ is spanned by connected Jacobi diagrams with
$i$ internal vertices and with external vertices totally ordered and labeled by elements of $H_\Q$,
modulo the AS, IHX, STU-like and multilinearity relations. 
The following theorem is essentially proved in \cite{CHM}, see \S~\ref{subsec:LMO}. 

\begin{theorem}
  \label{th:surgery-LMO}
  For $g\ge0$, there are graded Lie algebra isomorphisms
  \begin{gather}
    \label{e5}
    \A^{<,c}\sHQ \overset{\psi}{\underset{\LMO}{\longrightleftarrows}}
    \Gr^Y\Cgg\oQ ,
  \end{gather}
  which are inverse to each other.
\end{theorem}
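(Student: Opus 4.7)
The plan is to construct the two maps separately, verify each is a graded Lie algebra homomorphism, and then show they are mutually inverse; most of the work is already present in \cite{CHM}, so the task is primarily to package it.

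First, I would define the surgery map $\psi$. Given a connected Jacobi diagram $D$ with $i$ internal vertices and external vertices totally ordered and labeled by elements of $\HQ$, one realizes $D$ as a graph clasper $C_D$ embedded in $\sgg\times[-1,1]$ whose combinatorial type mirrors $D$ and whose leaves realize the $\HQ$-labels via pushed-in oriented curves (using the order $<$ to stack them along the interval direction). Surgery on $C_D$ produces a homology cylinder lying in $Y_i\Cgg$, and its class in $\Gr^Y_i\Cgg\oQ$ defines $\psi(D)$. Well-definedness with respect to the AS, IHX, STU-like, and multilinearity relations is a direct application of the calculus of claspers of \cite{Habiro,Goussarov_clovers}, the only subtle point being the STU-like relation, which encodes how two leaves sitting near the boundary can be exchanged at the cost of a Y-graph.

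Second, I would construct $\LMO$ from the functorial extension $\widetilde{Z}$ of the Le--Murakami--Ohtsuki invariant established in \cite{CHM}. Because $\widetilde{Z}$ is a monoid homomorphism on $\Cgg$ with values in a Hopf algebra of Jacobi diagrams, its degree-$i$ part vanishes on $Y_{i+1}\Cgg$ and, restricted to $Y_i\Cgg$, descends to a $\Q$-linear map $\LMO: Y_i\Cgg/Y_{i+1}\to\A_i^{<,c}\sHQ$. The connectedness of the output and the landing in the primitive part come from taking logarithms of the group-like invariant $\widetilde{Z}$. The compatibility with the Lie bracket on both sides follows from the multiplicativity of $\widetilde{Z}$ together with the commutator identities in the associated graded (on the topological side, via \eqref{eq:N-series}; on the diagrammatic side, via the STU-like bracket).

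Third, I would prove $\LMO\circ\psi=\Id$ by computing the leading degree-$i$ contribution of $\widetilde{Z}$ on a homology cylinder obtained by surgery along $C_D$. This is the universality statement for $\widetilde{Z}$ with respect to the $Y$-filtration, which is the core content of \cite{CHM}: the principal term of $\log\widetilde{Z}$ on a $Y_i$-surgery reproduces the underlying diagram $D$ after relabeling leaves by their homology classes. Combined with the fact that every element of $Y_i\Cgg$ is $Y_{i+1}$-equivalent to a product of Y-graph surgeries of degree $i$ (again by clasper calculus), this forces $\psi$ to be surjective; injectivity then follows from $\LMO\circ\psi=\Id$, and both maps are mutually inverse isomorphisms.

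The main obstacle is the universality computation underlying $\LMO\circ\psi=\Id$: one must identify, degree by degree, the leading term of the functorial LMO invariant on a clasper surgery with the clasper's underlying diagram, matching conventions for orderings, orientations of leaves, and the normalization used to turn $\widetilde{Z}$ into a group-like Hopf algebra element. A secondary bookkeeping issue is that \cite{CHM} treats a slightly different diagrammatic target (with unordered external vertices), so one must verify that the ordering $<$ introduced here is respected by $\widetilde{Z}$; this is essentially the observation that the functoriality of $\widetilde{Z}$ encodes the stacking order of boundary components.
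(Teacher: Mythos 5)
Your proposal matches the paper's argument: $\psi$ is defined by clasper surgery and shown to be well defined, surjective and a Lie homomorphism via clasper calculus, while $\LMO$ is obtained from the functorial invariant $\ZtildeY$ of \cite{CHM} (composed with the identifications $\varphi$ and $s$), and the results of \cite{CHM} — universality with respect to the $Y$-filtration and the sign/ordering bookkeeping you flag — give that the two maps are mutually inverse. This is essentially the same route the paper takes, so no further comparison is needed.
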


\noindent
The isomorphism $\psi$ is a ``surgery'' map sending each Jacobi diagram to surgery
along its corresponding graph clasper in the trivial cylinder over $\sgg$.
The isomorphism $\LMO$ comes from a functorial version of the Le--Murakami--Ohtsuki invariant \cite{LMO,BGRT} 
constructed in \cite{CHM}.  

The $Y$-filtration (\ref{eq:Y-filtration}) on $\Cgg$ induces a similar filtration on the group $\Cgghat$
\begin{equation}
\label{eq:Yhat-filtration}
\Cgghat = \Yhat_1\Cgghat \supset  \Yhat_2\Cgghat \supset \Yhat_3\Cgghat \supset \cdots
\end{equation}
which is called the \emph{$\Yhat$-filtration} and is defined by 
$$
\Yhat_j \Cgghat := \varprojlim_{i\geq j} Y_j \Cgg/ Y_i.
$$
In the appendix, we define the Malcev Lie algebra of a filtered group,
which corresponds to the usual notion of Malcev Lie algebra when the group is filtered by the lower central series.
We also extend to this setting some well-known properties of Malcev Lie algebras and Malcev completions.
In particular, we consider in \S~\ref{subsec:Malcev} the Malcev Lie algebra of the group $\Cgghat$ 
endowed with the $\Yhat$-filtration and prove it to be isomorphic  to the  completion of $\A^{<,c}\sHQ$.
This enhances Theorem \ref{th:surgery-LMO} since, by a general fact (proved in the appendix), 
the graded Lie algebra associated to a filtered group is canonically isomorphic to the graded Lie algebra associated 
to the canonical filtration on its Malcev Lie algebra.

In \S~\ref{sec:SJD} we give an alternative description of the Lie algebra $\A^{<,c}\sHQ$. 
For this, we consider the graded vector space $\A^c\sHQ$ spanned by  connected Jacobi
diagrams with external vertices labeled by elements of $H_\Q$, 
subject to the AS, IHX and multilinearity relations.
There is no ordering of the external vertices anymore.  
We define a Lie algebra structure on $\A^c\sHQ$, which is isomorphic to that
of $\A^{<,c}\sHQ$ via a ``symmetrization'' map
\begin{gather*}
  \chi\colon\A^c\sHQ\simeqto \A^{<,c}\sHQ.
\end{gather*}
The Lie algebra  $\A^c(H_\Q)$ consists of the primitive elements
of a  Hopf algebra $\A\sHQ$ of ``symplectic Jacobi diagrams'', 
whose associative multiplication is a diagrammatic analogue of the Moyal--Weyl product.
This analogy is justified by considering weight systems associated to metrized Lie algebras.

\subsection{The mapping cylinder construction} 

As proposed by the first author in \cite{Habiro}, the injective monoid homomorphism
\begin{equation}
\label{eq:mapping_cylinder}
\mapcyl\colon\Igg \hookrightarrow \Cgg
\end{equation}
defined by the mapping cylinder construction serves as a useful tool
in the study of the Torelli group.
It follows from the inclusion \eqref{eq:N-series} that
$\mapcyl$ sends the lower central series of $\Igg$ to the $Y$-filtration of $\Cgg$:
\begin{gather*}
  \mapcyl(\Gamma_i \Igg) \subset Y_i \Cgg \quad \quad   \text{for all $i\ge1$}.
\end{gather*}
So, $\mapcyl$ induces a group homomorphism
$$
\mapcylhat: \Igghat \longrightarrow \Cgghat,
$$
as well as a graded Lie algebra homomorphism
\begin{equation}
\label{eq:mapping_cylinder_graded}
\Gr \mapcyl\colon \Gr^\Gamma \Igg \longrightarrow \Gr^Y \Cgg.
\end{equation}
It is natural to ask whether the homomorphisms $\mapcylhat$ and
$\Gr\mapcyl$ are injective or not.
For example, in degree $1$, the homomorphism
\begin{gather*}
  \Gr_1 \mapcyl: \Igg/[\Igg,\Igg] \longrightarrow \Cgg/Y_2  
\end{gather*}
is an isomorphism for $g\geq 3$ \cite{Habiro,MM}.
\begin{question}
\label{ques:strong_injectivity}
Is the graded Lie algebra homomorphism $\Gr \mapcyl\colon \Gr^\Gamma \Igg \to \Gr^Y \Cgg$
injective when $g\geq 3$?
\end{question}
\noindent
If $g=2$, $\Gr \mapcyl$ is certainly not injective because
$\Gr^\Gamma_1 \mathcal{I}_{2,1}$ is not finitely generated\footnote{
This follows, for instance, from the fact that the Torelli group of a closed surface of genus $2$
is free of infinite rank \cite{Mess}.}.\\

The question has been asked by the first author in \cite{Habiro} 
to clarify the relationship between Hain's presentation of the Torelli Lie algebra 
and diagrammatic descriptions of the Lie algebra of homology cylinders.
The following result is a starting point of studies in this direction. 

\begin{theorem}
  \label{th:diagram}
  If $g\ge3$, then the following diagram in the category of graded Lie algebras 
  with $\Sp(H_\Q)$-actions is commutative:
  \begin{equation}
    \label{eq:diagram}
    \xymatrix{
      {\Lie\left(\LHQ\right) \left/ {\rm{R}}(\Igg) \right. }
      \ar[d]_-{\overline{J}}^-\simeq \ar[rr]^-{\overline{Y}} 
      && {\A^{<,c}\sHQ} \ar[d]_-{\psi}^-\simeq\\
	 {\Gr^\Gamma \Igg \oQ} \ar[rr]^{\Gr \mapcyl \oQ}
	 && {\Gr^Y \Cgg \oQ}. \ar@/_1pc/[u]_-{\LMO}
    }
  \end{equation}
  Here, $\overline{Y}$ is induced by the Lie algebra homomorphism
  $Y: \Lie\left(\LHQ\right)\to \A^{<,c}\sHQ$ which, in degree $1$,
  sends the trivector $x\wedge y\wedge z$ to the $Y$-graph $\Ygraphtop{x}{y}{z}$.
\end{theorem}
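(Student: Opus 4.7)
The plan is to exploit that $\Lie(\LHQ)$ is a free Lie algebra generated in degree one, so that both the well-definedness of $\overline{Y}$ and the commutativity of \eqref{eq:diagram} reduce to a single identity in degree one, plus the injectivity of $\psi$ given by Theorem \ref{th:surgery-LMO}. Concretely, I would first work with the outer square \emph{before} passing to the quotient by $\mathrm{R}(\Igg)$: both $\psi\circ Y$ and $(\Gr\mapcyl\oQ)\circ J$ are Lie algebra maps $\Lie(\LHQ)\to\Gr^Y\Cgg\oQ$, so by the universal property of the free Lie algebra they coincide as soon as they agree on the degree one generating space $\LHQ$. Granting this coincidence in degree one, the identity $\psi\circ Y=(\Gr\mapcyl\oQ)\circ J$ holds on all of $\Lie(\LHQ)$ and therefore vanishes on $\mathrm{R}(\Igg)=\ker J$. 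Injectivity of $\psi$ then forces $Y(\mathrm{R}(\Igg))=0$, so $\overline{Y}$ is well-defined, and \eqref{eq:diagram} commutes tautologically.

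The geometric content is concentrated in the degree-one computation. After identifying $\Gr_1^\Gamma\Igg\oQ$ with $\LHQ$ via the rational extension of the first Johnson homomorphism $\tau_1$, the map $J$ in degree one is an identification, so $(\Gr_1\mapcyl\oQ)\circ J$ sends a trivector $x\wedge y\wedge z$ to the $Y_2$-class of $\mapcyl(f)$ for any $f\in\Igg$ with $\tau_1(f)=x\wedge y\wedge z$. On the diagrammatic side, $Y$ sends $x\wedge y\wedge z$ to the $Y$-graph with legs labeled $x,y,z$, and $\psi$ maps this to the $Y_2$-class of the trivial cylinder surgered along a $Y$-clasper whose three leaves realize $x,y,z\in H$. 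Thus the required identity in degree one reduces to the foundational clasper-calculus statement that the mapping cylinder of such an $f$ is $Y_2$-equivalent to the trivial cylinder surgered along such a $Y$-clasper; this is classical and is recorded in the references \cite{Habiro,Massuyeau} already cited for the relation between Torelli surgeries and $Y_i$-equivalence.

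The $\SpHQ$-equivariance required by the statement is clear for each constituent: $\overline{J}$ by naturality of $\tau_1$; $\psi$ and $\LMO$ by the functorial/diagrammatic construction underlying Theorem \ref{th:surgery-LMO}; $\Gr\mapcyl$ by the naturality of the mapping cylinder with respect to the $\Mgg$-action and its factorization through $\SpHQ$; and $\overline{Y}$ because it is induced from the manifestly $\SpHQ$-equivariant degree-one assignment $x\wedge y\wedge z\mapsto Y$-graph, together with the freeness argument above. Hence the entire diagram lives in the category of graded Lie algebras with $\SpHQ$-action.

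The main obstacle is the degree-one identification, namely pinning down precisely which $Y$-clasper surgery realizes, up to $Y_2$-equivalence, the mapping cylinder of a given first-Johnson generator of $\Igg$. This is the bridge between the algebraic Johnson filtration of the Torelli group and the topological clasper filtration of the monoid of homology cylinders, and it is the genuinely geometric input on which the otherwise formal reduction above rests; everything else follows from freeness of $\Lie(\LHQ)$ and injectivity of $\psi$.
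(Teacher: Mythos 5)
Your proposal is correct and follows essentially the same route as the paper: reduce everything to degree one via the freeness of $\Lie(\LHQ)$ and the fact that all maps are Lie algebra homomorphisms, then invoke the identification of the degree-one part of the diagrammatic side with the first Johnson homomorphism (the paper phrases this via $\LMO$ and the result of \cite{CHM} that $\Gr_1\ZtildeY$ computes $\tau_1$, which is equivalent to your $\psi$-side formulation since $\psi$ and $\LMO$ are mutually inverse by Theorem \ref{th:surgery-LMO}). The well-definedness of $\overline{Y}$ on the quotient by ${\rm R}(\Igg)=\Ker(J)$ and the $\SpHQ$-equivariance are handled as you describe.
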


\noindent
Thus, Theorem \ref{th:diagram} reduces the study of the map $\Gr \mapcyl \oQ$ 
to the understanding of the algebraically-defined map
\begin{gather}
  \label{e13}
  \overline{Y}\colon\Lie(\LHQ)/{\rm{R}}(\Igg)\longrightarrow \A^{<,c}\sHQ,
\end{gather}
the source of which  is described by Hain's result (Theorem \ref{th:Hain}). 
Theorem \ref{th:diagram} is proved in \S~\ref{sec:mapping_cylinder_construction},
where the symplectic actions in diagram (\ref{eq:diagram}) are also specified.

In \S~\ref{sec:quadratic}  we use the Lie algebra $\AcHQ\sHQ$ 
to compute the Lie bracket of $\AgcHQ\sHQ$ in degree $1+1$. 
Thus, we obtain the following result:

\begin{theorem}
\label{th:degree_2}
If $g\ge3$, then the kernel of 
$Y_2: \Lie_2(\Lambda^3 H_\Q) \to \A_2^{<,c}\sHQ$ 
coincides with the submodule ${\rm{R}}_2(\Igg)$.
\end{theorem}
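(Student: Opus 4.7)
The plan is to use the commutative diagram of Theorem \ref{th:diagram} to reduce the claim to an injectivity statement in degree $2$, and then verify that injectivity by an explicit computation of the quadratic bracket in $\A^{<,c}\sHQ$ via the symmetrized model $\A^c\sHQ$.

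The inclusion $\mathrm{R}_2(\Igg)\subset\ker(Y_2)$ is built into the definition of $\overline{Y}$: since $\mathrm{R}(\Igg)=\ker(J)$ and diagram (\ref{eq:diagram}) forces $Y$ to factor through $\Lie(\LHQ)/\mathrm{R}(\Igg)$, the map $Y_2$ vanishes on $\mathrm{R}_2(\Igg)$. For the reverse inclusion I would use the diagram again: $\overline{J}$ is an isomorphism by construction, and $\psi$ is an isomorphism by Theorem \ref{th:surgery-LMO}, so
$$
\overline{Y}_2 \;=\; \psi_2^{-1}\circ\bigl(\Gr_2\mapcyl\oQ\bigr)\circ\overline{J}_2,
$$
and the containment $\ker(Y_2)\subset\mathrm{R}_2(\Igg)$ amounts to the injectivity of $\overline{Y}_2$, or equivalently of $\Gr_2\mapcyl\oQ$.

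For the concrete calculation I would work in the symmetrized Lie algebra $\A^c\sHQ$ of \S~\ref{sec:SJD}. Transporting the bracket through $\chi\colon\A^c\sHQ\simeqto\A^{<,c}\sHQ$ and using that multiplication in the ambient Hopf algebra $\A\sHQ$ is a diagrammatic Moyal--Weyl product, the Lie bracket reduces to a signed sum of ``contractions'' that pair external vertices of two diagrams against each other with weight $\omega$. Applied to two $Y$-graphs with external labels $x,y,z$ and $x',y',z'$, this yields an explicit closed formula for $Y_2$ as a sum over the nine possible pairings of external legs. One then invokes Hain's explicit generating set for $\mathrm{R}_2(\Igg)$ (as described in Theorem \ref{th:Hain}) to check (a) that each such generator lies in $\ker(Y_2)$, which is automatic from the factorization above, and (b) that no additional element of $\Lie_2(\LHQ)=\Lambda^2\LHQ$ is killed by the formula.

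The hard part is step (b), which amounts to decomposing $\Lambda^2\LHQ$ and $\A_2^{<,c}\sHQ$ into $\Sp(\HQ)$-isotypic components for $g\ge3$ and verifying, isotypic piece by isotypic piece, that the kernel of $Y_2$ matches the corresponding summand of $\mathrm{R}_2(\Igg)$. In the stable range $g\ge6$ this is a finite, explicit calculation in symplectic representation theory; in the unstable cases $g=3,4,5$ one further verifies by direct computation on test elements that the explicit formula for $Y_2$ remains injective on a complement of $\mathrm{R}_2(\Igg)$, ruling out any accidental kernel coming from the small-genus extra representations. Once this matching is complete, the equality $\ker(Y_2)=\mathrm{R}_2(\Igg)$ follows.
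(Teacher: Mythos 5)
Your overall strategy --- get $\mathrm{R}_2(\Igg)\subset\Ker(Y_2)$ from Theorem \ref{th:diagram}, then compute the quadratic bracket explicitly in the symmetrized model $\A^c\sHQ$ and compare kernels via the $\SpHQ$-isotypic decomposition of $\Lambda^2\LHQ$ --- is the paper's. But your step (b) has a genuine gap. You propose to ``invoke Hain's explicit generating set for $\mathrm{R}_2(\Igg)$'' and match $\Ker(Y_2)$ against it piece by piece. The explicit description of $\mathrm{R}_2(\Igg)$ as the $\SpHQ$-span of $r_1$ and $r_2$ (Proposition \ref{prop:HHS}) is available in the literature only for $g\geq 6$; for $g=3,4,5$ it is not an input you may invoke --- it is an \emph{output} of the argument (equation \eqref{e10}). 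Your fallback for small genus (``verify injectivity on a complement of $\mathrm{R}_2(\Igg)$'') presupposes knowing that complement, so the plan is circular exactly in the cases $g=3,4,5$ that the theorem is claimed for. Note also that Theorem \ref{th:Hain} does not supply the explicit quadratic relations, only the fact that $\mathrm{R}(\Igg)$ is generated in low degree.

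The missing idea is topological, and it is what closes the argument in all genera $g\geq 3$: one realizes $r_1$ and $r_2$ as $\tau_1(f_i)\wedge\tau_1(g_i)$ for pairs of bounding-pair maps $f_i,g_i$ with disjoint supports; disjointness forces $[f_i,g_i]=1$ in $\Igg$, hence $r_i\in\Ker(J_2)=\mathrm{R}_2(\Igg)$. Combined with $\mathrm{R}_2(\Igg)\subset\Ker(Y_2)$ and the purely diagrammatic computation $\Ker(Y_2)=\langle r_1,r_2\rangle_{\SpHQ}$ (Lemma \ref{lem:ker_bracket}), the three containments collapse to equalities, proving the theorem and simultaneously extending Proposition \ref{prop:HHS} to $g\geq 3$. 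A smaller point: the bracket of two $Y$-graphs is not only the nine single contractions; formula \eqref{eq:commutator} sums over matchings of \emph{odd} cardinality, so there is also the triple contraction producing a $\theta$-graph weighted by a $3\times 3$ determinant (Lemma \ref{lem:bracket}). That term cannot be dropped: it supplies one of the two trivial summands of the image (the Casson-invariant direction in Proposition \ref{prop:im_bracket}), and without it the representation-theoretic count that pins down the kernel comes out wrong.
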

\noindent
This can be regarded as a diagrammatic formulation 
of previous results by Morita \cite{Morita_Casson_I,Morita_Casson_II} and Hain \cite{Hain},
so that some computations done in \S~\ref{sec:quadratic} to prove it 
should be essentially well-known to experts.
We also identify the image of $\chi_2^{-1} \circ Y_2\colon\Lie_2(\LHQ)\to\Ac{2}\sHQ$
with the {\em even part} $\A^{c}_{2,\ev}\sHQ$,
consisting of linear combinations of Jacobi diagrams whose first Betti number is even. 

Theorems \ref{th:diagram} and  \ref{th:degree_2}
give a partial answer to Question \ref{ques:strong_injectivity} in degree $2$: 

\begin{corollary}
  \label{r13}
  If $g\ge3$, then the map
  $\Gr_2\mapcyl\oQ\colon\Gr^\Gamma_2\Igg\oQ\longrightarrow\Gr^Y_2\Cgg\oQ$
  is injective.  
\end{corollary}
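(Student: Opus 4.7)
The plan is to read off the corollary directly from the two main theorems already stated, by restricting the commutative diagram of Theorem~\ref{th:diagram} to degree~$2$. Since Theorem~\ref{th:degree_2} identifies $\ker Y_2$ exactly with $\rm{R}_2(\Igg)$, the induced map $\overline{Y}_2$ on the quotient is injective, and a quick diagram chase transports this injectivity across the vertical isomorphisms to $\Gr_2\mapcyl\oQ$.

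More concretely, I would proceed as follows. First, take the degree $2$ piece of diagram~\eqref{eq:diagram}. It reads
\begin{equation*}
\xymatrix{
\Lie_2(\LHQ)/{\rm{R}}_2(\Igg) \ar[d]_-{\overline{J}_2}^-{\simeq} \ar[rr]^-{\overline{Y}_2} && \A^{<,c}_2\sHQ \ar[d]_-{\psi_2}^-{\simeq} \\
\Gr^\Gamma_2\Igg\oQ \ar[rr]^-{\Gr_2\mapcyl\oQ} && \Gr^Y_2\Cgg\oQ,
}
\end{equation*}
and it commutes by Theorem~\ref{th:diagram}. The two vertical maps $\overline{J}_2$ and $\psi_2$ are isomorphisms: $\overline{J}$ is an isomorphism by Hain's presentation (Theorem~\ref{th:Hain}, together with the definition of $\overline{J}$), and $\psi$ is an isomorphism by Theorem~\ref{th:surgery-LMO}. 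Second, apply Theorem~\ref{th:degree_2}, which asserts precisely that $\ker\bigl(Y_2\colon\Lie_2(\LHQ)\to\A^{<,c}_2\sHQ\bigr)={\rm{R}}_2(\Igg)$. This is exactly the statement that the map $\overline{Y}_2$ induced on the quotient by ${\rm{R}}_2(\Igg)$ is injective. Finally, because the two vertical arrows are isomorphisms and the top horizontal arrow is injective, commutativity of the square forces $\Gr_2\mapcyl\oQ$ to be injective as well.

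There is no real obstacle here, since all the substance has been placed in Theorems~\ref{th:diagram} and~\ref{th:degree_2}. The only point that requires a moment of care is to confirm that the diagram, originally stated for the full graded Lie algebras, restricts cleanly to degree $2$; this is immediate since each arrow is a morphism of graded objects. Thus the corollary is an essentially formal consequence of the two preceding theorems.
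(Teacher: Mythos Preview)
Your proof is correct and follows exactly the approach indicated in the paper: the corollary is stated there without a separate proof, merely with the remark that Theorems~\ref{th:diagram} and~\ref{th:degree_2} together yield it, and your argument is precisely the diagram chase this entails. The only cosmetic difference is that the paper's own working diagram (in \S\ref{sec:quadratic}) has $\LMO_2$ as the right vertical arrow rather than $\psi_2$, but since these are mutually inverse isomorphisms this changes nothing.
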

\noindent

There is also a ``stabilized'' form of Question \ref{ques:strong_injectivity}.
\begin{conjecture}
  \label{r3}
  For $g\geq 3$, the map
  \begin{gather}
    \label{e11}
    \glim\Gr \mapcyl\colon
    \glim\Gr^\Gamma \Igg \longrightarrow \glim\Gr^Y \Cgg
  \end{gather}
  is injective, where the spaces and the map are induced by a sequence
  of surface inclusions
  $\Sigma_{0,1}\subset \Sigma_{1,1}\subset \Sigma_{2,1}\subset \cdots$.
\end{conjecture}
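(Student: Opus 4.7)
The plan is to reduce Conjecture \ref{r3} to a purely algebraic statement via the commutative diagram of Theorem \ref{th:diagram}, and then to attack it combinatorially on the diagrammatic side. At each genus $g\ge 3$, the map $\Gr\mapcyl\oQ$ is conjugate, through the isomorphisms $\overline{J}$ and $\psi$, to
\[
\overline{Y}\colon \Lie(\LHQ)\bigl/R(\Igg) \longrightarrow \A^{<,c}\sHQ.
\]
One checks that the surface inclusions $\Sigma_{g,1}\subset \Sigma_{g+1,1}$ induce compatible stabilization maps on all four corners of diagram \eqref{eq:diagram}, so the conjecture is equivalent to the injectivity of $\glim_g \overline{Y}$, or equivalently to the identity $\ker\bigl(\glim_g Y\bigr)=\glim_g R(\Igg)$ inside $\Lie\bigl(\glim_g\LHQ\bigr)$.

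The next ingredient is Hain's theorem: for $g\ge 6$, the ideal $R(\Igg)$ is generated by its quadratic part $R_2(\Igg)$, so $\glim_g R(\Igg)$ coincides with the Lie ideal $\mathcal{R}_2$ in $\Lie\bigl(\glim_g\LHQ\bigr)$ generated by $\glim_g R_2(\Igg)$. By Theorem \ref{th:degree_2}, this quadratic piece is precisely $\ker\bigl(\glim_g Y_2\bigr)$, so $\mathcal{R}_2$ is automatically contained in $\ker\bigl(\glim_g Y\bigr)$, and the conjecture reduces to the reverse inclusion: every element of $\ker\bigl(\glim_g Y\bigr)$ is a Lie-theoretic consequence of the quadratic relations.

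To establish this, I would pass to the symmetrized model $\AcHQ\sHQ$ of \S\ref{sec:SJD}, whose Moyal--Weyl-type product makes the bracket combinatorics transparent and invites detection by weight systems coming from metrized Lie algebras. The strategy has two steps: first, produce a normal form for iterated brackets of $Y$-graphs modulo $\mathcal{R}_2$ by using the AS and IHX relations to slide internal edges past trivalent vertices; second, separate this normal form by a suitably chosen family of Lie-algebra weight systems in order to conclude that it maps faithfully into $\AcHQ\sHQ$. The main obstacle is the normal-form step: controlling it amounts to showing that all higher ``Morita--Hain'' relations among $Y$-graphs in degrees $\ge 3$ are Lie consequences of $R_2$. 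This is intimately linked with open questions about the cokernels of higher Johnson homomorphisms and about tree-level identities satisfied by the LMO invariant, so even stably the problem is genuinely nontrivial and plausibly requires a new structural insight into the diagrammatic Moyal--Weyl bracket, well beyond the degree-two argument underlying Corollary \ref{r13}.
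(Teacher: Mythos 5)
This statement is labeled as a conjecture in the paper, and the paper does not prove it: the authors only show (via Proposition \ref{prop:stability_Y}) that it is equivalent to the stable coincidence of the lower central series of $\Igg$ with the restricted $Y$-filtration (Conjecture \ref{r15}), and they establish the degree-two case (Corollary \ref{r13}). Your proposal does not prove it either. The reduction in your first two paragraphs is sound and essentially reproduces the paper's own discussion in \S\ref{sec:conclusion}: combining Theorem \ref{th:diagram}, Hain's Theorem \ref{th:Hain} and Theorem \ref{th:degree_2}, the conjecture is equivalent (stably) to the statement that $\ker\bigl(\glim_g Y\bigr)$ is generated as a Lie ideal by its quadratic part, i.e$.$ to the quadratic presentability of the Lie subalgebra $a\sHQ\subset\A^c\sHQ$ raised in Question \ref{ques:quadratic_presentation}. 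But that reformulation is not progress toward a proof; it is a restatement of the open problem.

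The genuine gap is your ``normal-form step'': showing that all relations among iterated brackets of $Y$-graphs in internal degree $\ge 3$ are Lie consequences of $\langle r_1,r_2\rangle_{\SpHQ}$ is precisely the content of the conjecture, and no argument is offered for it --- sliding edges via AS and IHX does not by itself produce a normal form modulo the quadratic ideal, and the paper only verifies the corresponding statement in degree $3$ by a long hand computation. The second step is equally unsupported: the weight systems $W_{\mathfrak g}$ of Theorem \ref{th:weight_systems} are not known to separate $a\sHQ$ (or even $\A^c\sHQ$) faithfully, and no candidate family with this property is exhibited. Since you yourself acknowledge that the key step ``plausibly requires a new structural insight,'' the proposal should be regarded as a (correct) reduction plus a research program, not a proof; as far as the paper is concerned the statement remains open, with only the degree-two case and the equivalence with Conjecture \ref{r15} established.
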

\noindent
Conjecture \ref{r3} is equivalent to the conjecture stated in \cite{Habiro} that the lower central
 series of $\Igg$ and the restriction to $\Igg$ of the $Y$-filtration of $\Cgg$ are stably equal.
Some stability properties are discussed in \S~\ref{sec:stability}.

The case of a closed connected oriented surface $\Sigma_g$ of genus $g$ is also considered in \S~\ref{sec:closed_case},
where analogues of Theorems \ref{th:surgery-LMO}, \ref{th:diagram} and \ref{th:degree_2} are proved.

The final  \S~\ref{sec:conclusion} concludes with further problems and remarks.
The problems of determining the kernel and the image of the map $Y$ in higher degree are discussed, 
and such problems are related to questions about Johnson homomorphisms.\\

\noindent
\textbf{Acknowledgements.} The authors thank the anonymous referee for her/his careful reading of the manuscript.
The first author was partially supported by Grant-in-Aid for Scientific Research (C) 19540077.

\vspace{0.5cm} 

\section{Diagrammatic description of the Lie algebra of homology cylinders}

\label{sec:surgery-LMO}

In this section, we recall from \cite{Habiro,CHM} the main ingredients to obtain Theorem \ref{th:surgery-LMO},
which gives a diagrammatic description of the Lie algebra of homology cylinders.
Furthermore, we produce from the LMO invariant a diagrammatric description
of the Malcev Lie algebra of the group of homology cylinders.

\subsection{The algebra $\A^<\sHQ$ and the Lie algebra $\A^{<,c}\sHQ$}

\label{subsec:ordered_Jacobi_diagrams}

First of all, we recall the definition of the cocommutative Hopf algebra $\A^<\sHQ$,
which was introduced in \cite{Habiro} and used in \cite{CHM}. 

A \emph{Jacobi diagram} is a finite graph whose vertices
have valence $1$ (\emph{external} vertices) or $3$ (\emph{internal}
vertices).  Each internal vertex is \emph{oriented}, in the sense that
its incident edges are cyclically ordered.  A Jacobi diagram is
\emph{colored} by a set $S$ if a map from the set of its external
vertices to $S$ is specified. A \emph{strut} is a Jacobi diagram with
only two external vertices and no internal vertex.  
The {\em internal degree} of a Jacobi diagram is the number of its internal vertices.

We define the following $\Q$-vector space
$$
\A^<(H_\Q) := 
\frac{\Q\cdot \left\{ \begin{array}{c} \hbox{Jacobi diagrams without strut component and with}\\
\hbox{external vertices totally ordered and colored by } \HQ  \end{array} \right\}}
{\hbox{AS, IHX, STU-like, multilinearity}},
$$
which is also denoted simply by $\AgHQ$.
Here, the AS and IHX relations among Jacobi diagrams are the usual ones, namely\\

\centerline{\relabelbox \epsfxsize 3.5truein \epsfbox{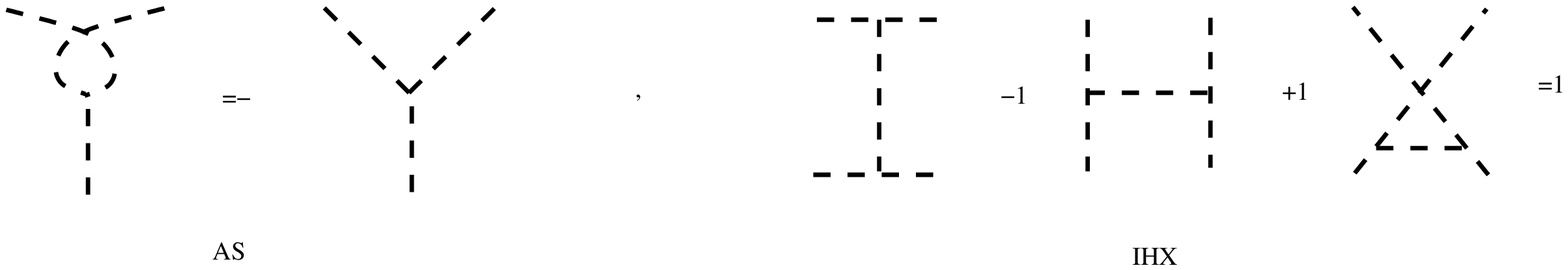}
\adjustrelabel <0cm,-0.2cm> {AS}{AS}
\adjustrelabel <0cm,-0.2cm> {IHX}{IHX}
\adjustrelabel <0cm,-0cm> {=-}{$= \ -$}
\adjustrelabel <0cm,-0cm> {,}{,}
\adjustrelabel <0cm,-0cm> {-1}{$-$}
\adjustrelabel <0cm,-0.05cm> {+1}{$+$}
\adjustrelabel <0cm,-0.1cm> {=1}{$=0$,}
\endrelabelbox}
\vspace{0.5cm} 
\noindent
where, as usual, the vertex orientation is given by the trigonometric orientation.
The STU-like and multilinearity relations are defined by\\

\centerline{\relabelbox \small
\epsfxsize 3.5truein \epsfbox{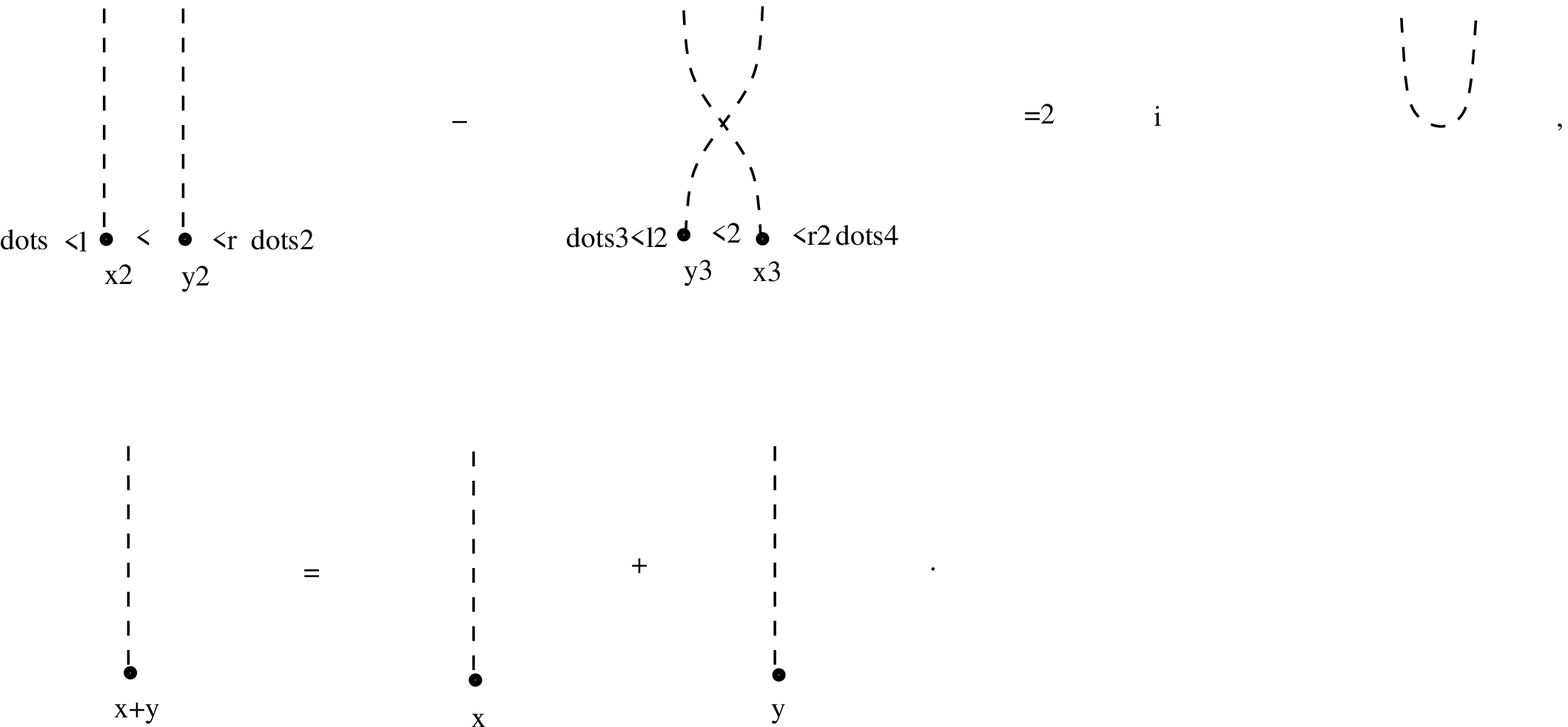}
\adjustrelabel <-0.05cm,-0.05cm> {x}{$x$}
\adjustrelabel <-0.05cm,-0.05cm> {y}{$y$}
\adjustrelabel <-0.35cm,-0.05cm> {x+y}{$x+y$}
\adjustrelabel <-0.1cm,-0.1cm> {x2}{$x$}
\adjustrelabel <-0.05cm,-0.05cm> {y2}{$y$}
\adjustrelabel <-0.05cm,-0.05cm> {x3}{$x$}
\adjustrelabel <-0.1cm,-0.05cm> {y3}{$y$}
\adjustrelabel <-0.1cm,-0.05cm> {<}{$<$}
\adjustrelabel <-0.05cm,-0.05cm> {<2}{$<$}
\adjustrelabel <-0.1cm,-0cm> {<l}{$<$}
\adjustrelabel <-0.05cm,-0cm> {<l2}{$<$}
\adjustrelabel <-0.05cm,-0.05cm> {<r}{$<$}
\adjustrelabel <-0.05cm,-0.05cm> {<r2}{$<$}
\adjustrelabel <-0.2cm,-0cm> {dots}{$\cdots$}
\adjustrelabel <0.05cm,-0.05cm> {dots2}{$\cdots$}
\adjustrelabel <-0.2cm,-0cm> {dots3}{$\cdots$}
\adjustrelabel <0.05cm,-0.05cm> {dots4}{$\cdots$}
\adjustrelabel <-0.2cm,-0cm> {-}{$-$}
\adjustrelabel <-0.1cm,-0.05cm> {+}{$+$}
\adjustrelabel <-0.3cm,-0cm> {i}{$\omega(x,y)$}
\adjustrelabel <0cm,-0cm> {=}{$=$}
\adjustrelabel <-0.2cm,-0cm> {=2}{$=$}
\adjustrelabel <0cm,-0cm> {,}{,}
\adjustrelabel <0cm,-0cm> {.}{,}
\endrelabelbox}
\vspace{0.5cm}

\noindent
where $x,y\in H_\Q$.
The space $\Ag{}\sHQ$ is graded by the internal degree of Jacobi diagrams.
Its degree completion will also be denoted by $\Ag{}\sHQ$.

There is also a space $\A^{<}(-H_\Q)$ defined as $\A^{<}(H_\Q)$ except that
one uses the symplectic form $-\omega$ in  the STU-like relation instead of $\omega$.
There is a canonical isomorphism 
$$
s: \A^{<}(-H_\Q) \longrightarrow \A^{<}(H_\Q)
$$
defined by $s(D) = (-1)^{\chi(D)}D$ for any Jacobi diagram $D$ with Euler characteristic $\chi(D)$.

\begin{remark}
Note that $\A^{<}(H_\Q)$ depends not only on the vector space
$\HQ$ but also on the symplectic form $\omega$, with which $\HQ$ is implicitly equipped.
The space $\A^{<}(H_\Q)$ is denoted by $\A(\Sigma_{g,1})$ in \cite{Habiro},
while $\A^{<}(-H_\Q)$ corresponds to the space $\A(\Sigma_{g,1})$ in \cite{CHM}.
\end{remark}

The multiplication $D \osqcup E$ of two Jacobi diagrams $D,E \in \AgHQ$
is the disjoint union of $D$ and $E$, the external vertices of $E$ being considered
as ``larger'' than those of $D$. Then, $\AgHQ$ is an associative algebra
whose unit element is the empty diagram.
Like many other algebras of Jacobi diagrams in the literature, the
algebra $\AgHQ$ has a structure of a cocommutative Hopf algebra \cite{CHM}. 
The comultiplication $\Delta: \AgHQ \to \AgHQ \otimes \AgHQ$ 
for a Jacobi diagram $D\in\AgHQ$ is defined by
\begin{gather*}
  \Delta(D)=\sum_{D=D'\sqcup D''} D'\otimes D'',
\end{gather*}
where the sum is over all the decompositions of $D$ into two families
of connected components $D',D''$;
in the right-hand side, the orders of the external vertices of
$D'$ and $D''$ are induced by that of $D$.
The counit $\varepsilon\colon\AgHQ\to\Q$ for a  diagram $D\in\AgHQ$ is defined by
\begin{gather*}
  \varepsilon(D) =
  \begin{cases}
    1&\text{if $D$ is empty},\\
    0&\text{otherwise}.
  \end{cases}
\end{gather*}
The antipode $S\colon\AgHQ\to\AgHQ$ is the unique algebra
anti-automorphism satisfying $S(D) = -D$ for each non-empty connected Jacobi diagram $D\in\AgHQ$.

As is well known, the set of primitive elements $\Prim(A)$ in a Hopf algebra $A$ forms a Lie algebra, 
with the Lie bracket given by $[x,y]=xy-yx$.  
Thus, we have the Lie algebra $\Prim(\AgHQ)$ of primitives in $\AgHQ$.
Moreover, the Hopf algebra $\AgHQ$ being cocommutative, the Milnor--Moore theorem asserts
that $\AgHQ$ is canonically isomorphic to the universal enveloping algebra $U\Prim(\AgHQ)$ of $\Prim(\AgHQ)$.
Let $\A^{<,c}\sHQ$ (or simply $\AgcHQ$) 
denote the subspace of $\A^<\sHQ$ spanned by the connected Jacobi diagrams.

\begin{lemma}
\label{r4}
We have $\AgcHQ=\Prim(\AgHQ)$.
\end{lemma}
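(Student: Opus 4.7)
The plan is to prove $\AgcHQ\subseteq\Prim(\AgHQ)$ directly, and to deduce the reverse inclusion from the Milnor--Moore theorem applied to $\AgHQ$, once $\AgcHQ$ has been verified to be (i)~a Lie subalgebra of $\AgHQ$ under the commutator bracket and (ii)~a set of algebra generators for $\AgHQ$. The first inclusion is immediate from the coproduct formula: a connected Jacobi diagram $D$ admits only the two trivial decompositions $D=D\sqcup\emptyset=\emptyset\sqcup D$ into disjoint unions of sub-families of connected components, so $\Delta(D)=D\otimes 1+1\otimes D$.

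Both (i) and (ii) rest on one and the same ``shuffle via the STU-like relation'' computation. In any diagram, swapping two adjacent external vertices of colors $x,y\in\HQ$ which come from different connected components equals, by the STU-like relation, the transposed diagram plus $\omega(x,y)$ times the diagram obtained by identifying those two vertices; this identification joins the two corresponding components through a new edge, so the correction term has strictly fewer connected components. For (i): the diagrams $D\osqcup E$ and $E\osqcup D$ have the same underlying graph $D\sqcup E$ but opposite block orders on their external vertices, so iterating the swap exhibits $[D,E]=D\osqcup E-E\osqcup D$ as a sum of one-contraction terms, each of which is connected when $D$ and $E$ are. For (ii): given a diagram $D$ with connected components $D_1,\ldots,D_k$, iteratively swapping adjacent external vertices to bring the order of $D$ to the block order of $D_1\osqcup\cdots\osqcup D_k$ yields $D=D_1\osqcup\cdots\osqcup D_k+(\text{terms with strictly fewer components})$, and induction on $k$ expresses $D$ as an element of the subalgebra generated by $\AgcHQ$.

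To conclude, note that $\AgHQ$ is graded by internal degree with degree-$0$ part equal to $\Q$, since a connected diagram of internal degree zero would be a strut and struts are excluded. As $\AgHQ$ is also cocommutative over $\Q$, the Milnor--Moore theorem gives a Hopf-algebra isomorphism $\AgHQ\cong U(\Prim(\AgHQ))$. By (i), the Lie-algebra inclusion $\AgcHQ\hookrightarrow\Prim(\AgHQ)$ induces, via functoriality and PBW, an injective Hopf-algebra map $U(\AgcHQ)\hookrightarrow U(\Prim(\AgHQ))=\AgHQ$; its image is the associative subalgebra generated by $\AgcHQ$, which is all of $\AgHQ$ by (ii). Taking primitives in the resulting Hopf-algebra isomorphism $U(\AgcHQ)\cong U(\Prim(\AgHQ))$ recovers the inclusion $\AgcHQ\hookrightarrow\Prim(\AgHQ)$ and forces it to be an equality. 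The main obstacle will be the careful combinatorial bookkeeping in the shuffle argument---ensuring that each STU-like correction genuinely drops the component count by one---after which Milnor--Moore and PBW do the rest.
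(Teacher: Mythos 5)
Your proof is correct and follows essentially the same route as the paper: connected diagrams are primitive by the coproduct formula, the STU-like relation shows $\AgcHQ$ is a Lie subalgebra generating $\AgHQ$, and Milnor--Moore together with Poincar\'e--Birkhoff--Witt forces $\AgcHQ=\Prim(\AgHQ)$. You merely spell out the shuffle/contraction bookkeeping that the paper leaves as ``one can check''.
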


\begin{proof}
Clearly, connected Jacobi diagrams are primitive.  Thus we have
$\AgcHQ\subset \Prim(\AgHQ)$.
Using the STU-like relation, one can check that $\AgcHQ$ is a Lie
subalgebra of $\Prim(\AgHQ)$
and that the algebra $\AgHQ$ is generated by $\AgcHQ$.
Since $\AgHQ=U\Prim(\AgHQ)$, it follows from the Poincar\'e--Birkhoff--Witt
theorem that $\AgcHQ=\Prim(\AgHQ)$.
\end{proof}

The natural $\SpHQ$-action on $\HQ$ induces an $\SpHQ$-action on
$\A^<\sHQ$, which is easily seen to be compatible with the Hopf algebra structure. 
In particular, $\A^{<,c}\sHQ$ is equipped with an
$\SpHQ$-action compatible with the Lie algebra structure.

\subsection{The surgery map $\psi$}

\label{subsec:surgery}

As suggested in \cite{Habiro}, there is a canonical linear isomorphism
$$
\psi: \AgcHQ \longrightarrow \Gr^Y \Cgg\oQ
$$
defined by mapping each connected Jacobi diagram $D$ to
the $3$-manifold obtained from the cylinder $\sgg \times [-1,1]$
by surgery along a graph clasper $C(D)$ obtained from $D$ as follows:
\begin{itemize}
\item Thicken $D$ to an oriented surface using the vertex-orientation of $D$
(vertices are thickened to disks, and edges to bands). 
Cut a smaller disk in the interior of each disk
that has been produced from an external vertex of $D$.
This leads to an oriented compact surface $S(D)$,
decomposed into disks, bands and annuli (corresponding to internal
vertices, edges and external vertices of $D$ respectively).
Use the induced orientation on $\partial S(D)$ to orient the cores of the annuli.
\item Next, embed $S(D)$ into the interior of $\sgg \times [-1,1]$ in such a way that
each annulus of $S(D)$ represents in $\HQ$ the color of the corresponding external vertex of $D$.
Moreover, the annuli should be in disjoint ``horizontal slices'' of $\sgg \times [-1,1]$
and their ``vertical height'' along $[-1,1]$  should respect the total ordering of the external vertices of $D$.
Such an embedding defines a graph clasper $C(D)$ in $\sgg \times [-1,1]$.
\end{itemize}
That $\psi$ is well-defined and surjective follows from clasper calculus \cite{Habiro,Goussarov_clovers,GGP}. 
For instance, the fact that the STU-like relation is satisfied in $\Gr^Y \Cgg$
is proved using Move 2 and Move 7 from \cite{Habiro}.
The detail of the degree $1$ case, where the STU-like relation amounts to saying that
the order of the external vertices does not matter, is done in \cite{MM}.
The higher degree case, where one has to consider also the IHX relation,
is similar but needs the zip construction \cite{Habiro}.
Using clasper calculus, one can also check that $\psi$ is a Lie algebra homomorphism.
See also \cite{GL_tree-level} and \cite{Habegger} for similar constructions.

To prove the injectivity of $\psi$, one needs the LMO invariant.

\subsection{The LMO map}

\label{subsec:LMO}

In a joint work with Cheptea \cite{CHM}, the authors extended the LMO
invariant of homology $3$-spheres to a functor on a category
of Lagrangian cobordisms, which are cobordisms between
surfaces with connected boundary, satisfying certain homological conditions.
(Some extensions of the LMO invariant to cobordisms were previously
constructed by Murakami and Ohtsuki \cite{MO} and by Cheptea and Le \cite{CL}.)
Since homology cylinders over $\sgg$ are Lagrangian cobordisms, 
the LMO functor restricts to a monoid homomorphism 
$$
\ZtildeY: \Cgg \longrightarrow \AY(\set{g}^+ \cup \set{g}^-)
$$
with values in a certain complete Hopf algebra of Jacobi diagrams.
The latter is isomorphic via a certain map $\varphi$ 
defined in \cite{CHM} to  $\A^<(-H_\Q)$.
Thus, the composition $s \circ \varphi \circ \ZtildeY$ defines a monoid homomorphism
\begin{equation}
\label{eq:LMO}
\Cgg \longrightarrow \A^<(H_\Q).
\end{equation}
Since $\ZtildeY$ is an isomorphism at the level of graded Lie algebras \cite{CHM},
the monoid homomorphism (\ref{eq:LMO}) induces a graded Lie algebra isomorphism
\begin{equation}
\label{eq:graded_LMO}
  \LMO\colon \Gr^Y\Cgg\oQ \simeqto \A^{<,c}(H_\Q).
\end{equation}
Taking care of signs, we also deduce from \cite{CHM} that $\psi$ and $\LMO$ 
are inverse to each other. Thus we have Theorem \ref{th:surgery-LMO}.

\subsection{The Malcev Lie algebra of the group of homology cylinders}

\label{subsec:Malcev}

The LMO functor can be used to prove more than Theorem \ref{th:surgery-LMO}:
It also produces a diagrammatic description of the Malcev Lie algebra of $\Cgghat$.
Here, the Malcev Lie algebra is defined with respect to the $\Yhat$-filtration (\ref{eq:Yhat-filtration})
rather than the lower central series of $\Cgghat$, 
the former being more natural than the latter from the point of view of finite-type invariants. 
Malcev completions and Malcev Lie algebras of filtered groups are presented in the appendix.
 
To deal with the Malcev Lie algebra of $\Cgghat$, 
we come back to the monoid homomorphism (\ref{eq:LMO}), which we also denote by $\LMO$:
\begin{equation}
\label{eq:LMO_homomorphism}
\LMO : \Cgg \longrightarrow \A^<.
\end{equation}
It is shown in \cite{CHM} that, if an $M \in \Cgg$ is $Y_i$-equivalent to the trivial cylinder, then
$\ZtildeY(M)-\varnothing$ starts in internal degree $i$. So, the $\LMO$ map induces a multiplicative map
$\LMO : \Cgg/Y_i \longrightarrow \A^</\A^<_{\geq i}$ for all $i\geq 1$, 
where $\A^<_{\geq i}$ denotes the internal degree at least $i$ part of $\A^<$.
By passing to the limit, we obtain 
\begin{equation}
\label{eq:LMO_homomorphism_complete}
\LMO : \Cgghat \longrightarrow \A^<.
\end{equation}
Since $\ZtildeY$ takes group-like values and since $\varphi$ and $s$
are Hopf algebra isomorphisms, the map (\ref{eq:LMO_homomorphism}) takes group-like values 
and so, by continuity of the coproduct, the map (\ref{eq:LMO_homomorphism_complete}) does too.
Thus, we get a Hopf algebra homomorphism
\begin{equation}
\label{eq:big_LMO}
\LMO : \Q[\Cgghat] \longrightarrow \A^<.
\end{equation}
There are two filtrations on the group algebra $\Q[\Cgghat]$. 
On one hand, let $F$ be the filtration defined in the appendix at (\ref{eq:filtration_group_algebra})
and induced by the $\Yhat$-filtration on the group $\Cgghat$.
On the other hand, let $F'$ be the filtration defined by rational finite-type invariants:
An $x\in \Q[\Cgghat]$ is declared to belong to $F'_i \Q[\Cgghat]$
if $f(x)=0$ for any finite-type invariant $f: \Cgg \to \Q$ of degree at most $i-1$.
By clasper calculus, it can be proved that $F=F'$ (see \cite{Habiro,Massuyeau}).
Since $\ZtildeY$ is universal among rational finite-type invariants \cite{CHM},
(\ref{eq:big_LMO}) induces  a monomorphism
$$
\LMO : \Q[\Cgghat]/ F_i \Q[\Cgghat] \longrightarrow \A^</\A^<_{\geq i}.
$$
Actually, this map is an isomorphism since it is bijective at the graded level \cite{CHM}.
Thus, passing to the limit, we finally obtain an isomorphism
$$
\LMO : \widehat{\Q}[\Cgghat] \longrightarrow \A^<
$$
of complete Hopf algebras, where $\widehat{\Q}[\Cgghat]$ denotes the completion
of $\Q[\Cgghat]$ with respect to the filtration $F$. Thus, we deduce the following 

\begin{theorem}
Let $\MalcevGroup{\Cgghat}$ be the  Malcev completion of the group $\Cgghat$ 
endowed with the $\Yhat$-filtration,
and let $\MalcevLie{\Cgghat}$ be its Malcev Lie algebra. 
Then, the LMO invariant induces an isomorphism of filtered groups
$$
\LMO:  \MalcevGroup{\Cgghat} \simeqto \GLike(\A^<)
$$
as well as an isomorphism of filtered Lie algebras
$$
\LMO:  \MalcevLie{\Cgghat} \simeqto \Prim(\A^<).
$$ 
\end{theorem}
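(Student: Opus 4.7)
The plan is to leverage the isomorphism of complete Hopf algebras
$$\LMO\colon \widehat{\Q}[\Cgghat] \simeqto \A^<$$
established immediately before the statement, together with the general characterization of Malcev completion and Malcev Lie algebra of a filtered group via group-like and primitive elements of its completed group algebra, which is developed in the appendix.

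First, I recall from the appendix that for any filtered group $(G, F_\bullet G)$, the Malcev completion $\MalcevGroup{G}$ is canonically identified, as a filtered group, with $\GLike(\widehat{\Q}[G])$, where $\widehat{\Q}[G]$ denotes the completion of the group algebra with respect to the filtration $F$ induced from $F_\bullet G$. Likewise, the Malcev Lie algebra $\MalcevLie{G}$ is canonically identified, as a filtered Lie algebra, with $\Prim(\widehat{\Q}[G])$. These identifications send the canonical filtration on $\MalcevGroup{G}$ (respectively $\MalcevLie{G}$) to the filtration induced from $\widehat{\Q}[G]$. Applied to $G=\Cgghat$ endowed with the $\Yhat$-filtration, this gives filtered identifications
$$\MalcevGroup{\Cgghat} \cong \GLike\bigl(\widehat{\Q}[\Cgghat]\bigr), \qquad \MalcevLie{\Cgghat} \cong \Prim\bigl(\widehat{\Q}[\Cgghat]\bigr).$$

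Second, any isomorphism of complete Hopf algebras restricts to an isomorphism between the respective submonoids of group-like elements and to an isomorphism between the respective sub-Lie algebras of primitive elements, and these restrictions automatically respect the induced filtrations. Applying this observation to the complete Hopf algebra isomorphism $\LMO\colon\widehat{\Q}[\Cgghat]\simeqto\A^<$ yields isomorphisms of filtered groups $\GLike(\widehat{\Q}[\Cgghat])\simeqto\GLike(\A^<)$ and of filtered Lie algebras $\Prim(\widehat{\Q}[\Cgghat])\simeqto\Prim(\A^<)$. Composing with the identifications of the previous paragraph gives the two isomorphisms of the theorem.

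The only nontrivial piece is the input result that $\LMO$ is an isomorphism of complete Hopf algebras respecting filtrations; this has already been established in the paragraphs preceding the statement, using that $F=F'$ (which depends on clasper calculus) and that $\widetilde{Z}^Y$ is universal among rational finite-type invariants. Given these ingredients, the main conceptual step that remains is verifying that the appendix's abstract identifications of $\MalcevGroup{G}$ and $\MalcevLie{G}$ with group-like and primitive elements are genuinely functorial with respect to filtration-preserving Hopf algebra isomorphisms; this is a formal but essential check, and it is the main technical obstacle in the argument, though it is of a purely general nature and belongs naturally to the appendix.
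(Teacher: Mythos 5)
Your proposal is correct and follows the same route as the paper: the theorem is deduced directly from the complete Hopf algebra isomorphism $\LMO\colon\widehat{\Q}[\Cgghat]\simeqto\A^<$ by restricting to group-like and primitive elements, which by the appendix's definitions \emph{are} the Malcev completion and Malcev Lie algebra of the filtered group. The only difference is that you treat the identification of $\MalcevGroup{G}$ and $\MalcevLie{G}$ with $\GLike(\widehat{\Q}[G])$ and $\Prim(\widehat{\Q}[G])$ as something requiring a functoriality check, whereas in the paper this is literally the definition, so the restriction step is immediate.
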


\noindent
The second part of this statement and Theorem \ref{th:graded_Malcev_Lie_algebra}
gives back Theorem \ref{th:surgery-LMO}.
Besides, it proves that the filtration on the Malcev Lie algebra of $\Cgghat$ 
comes from a grading, which is not true for an arbitrary filtered group.

\vspace{0.5cm}

\section{The Lie algebra of symplectic Jacobi diagrams}

\label{sec:SJD}

In this section, we define the algebra of symplectic Jacobi diagrams,
which is isomorphic to the algebras $\A^<\sHQ$ and is more convenient in some occasions.  
We interpret the multiplication of symplectic Jacobi diagrams as an analogue of the Moyal--Weyl product.

\subsection{The algebra of symplectic Jacobi diagrams}

\label{subsec:SJD}

We define the following vector space
$$
\A(H_\Q) := 
\frac{\Q\cdot \left\{ \begin{array}{c} \hbox{Jacobi diagrams without strut component}\\ 
\hbox{and with external vertices colored by } \HQ  \end{array} \right\}}
{\hbox{AS, IHX, multilinearity}},
$$
which is also simply denoted by $\AHQ$.
It is graded by the internal degree of Jacobi diagrams,
and its degree completion is also denoted by $\A\sHQ$.

There is a  graded linear map
$$
\chi : \AHQ \longrightarrow \AgHQ
$$
defined, for all Jacobi diagram $D \in \AHQ$ with $e$  external vertices, by
$$
\chi(D) := \frac{1}{e!} \cdot \left(\hbox{sum of all ways of ordering
 the $e$ external vertices of $D$} \right).
$$

\begin{proposition} 
\label{lem:chi}
The ``symmetrization'' map $\chi$ is an isomorphism. Its inverse is given on
a Jacobi diagram $D\in \AgHQ$, with external vertices $v_1 < \cdots < v_e$  
colored by $c(v_1),\dots,c(v_e) \in \HQ$ respectively, by the formula
\begin{equation}
\label{eq:chi_inverse}
\chi^{-1}(D) = D+\sum_{p=1}^{[e/2]} \frac{1}{2^p} \sum_{\{i_1,j_1\},\dots, \{i_p,j_p\}}\
\prod_{k=1}^p \omega\left(c(v_{i_k}),c(v_{j_k})\right)\ \cdot
D_{(v_{i_1} = v_{j_1},\dots,v_{i_p} = v_{j_p})}.
\end{equation}
Here, the second sum is taken over all ways of doing $p$ pairings $\{i_1,j_1\},\dots, \{i_p,j_p\}$ 
inside the set $\{1,\dots,e\}$ (with $i_1 < \cdots < i_p$ and $i_1<j_1, \dots,i_p<j_p$)
and the diagram $D_{(v_{i_1} = v_{j_1},\dots,v_{i_p} = v_{j_p})}$ is obtained 
from $D$ by gluing the vertices that are paired (and by forgetting the order of the remaining vertices).
\end{proposition}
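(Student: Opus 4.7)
The plan is to construct the inverse explicitly via the formula in (\ref{eq:chi_inverse}) and verify the two compositions are the identity. The whole argument is a combinatorial exercise in partial matchings of external vertices, where the antisymmetry of $\omega$ does the essential work.

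First, I would check that both $\chi$ and the candidate $\chi^{-1}$ are well defined. For $\chi$, the relations AS, IHX and multilinearity in $\A\sHQ$ are local at a single vertex, so applying $\chi$ yields the corresponding relation in $\A^<\sHQ$ summed over the $e!$ orderings and then averaged. For $\chi^{-1}$, defined a priori on the free vector space of ordered, $\HQ$-colored Jacobi diagrams, the same local observation handles AS, IHX and multilinearity; the essential content is that the STU-like relation is annihilated. Fix consecutive external vertices $v_k, v_{k+1}$ with colors $x,y$, let $D_<, D_>$ denote the two orderings that differ only by transposing them, and $D^{=}$ the diagram obtained by gluing them. Classify the partial matchings appearing in $\chi^{-1}(D_<) - \chi^{-1}(D_>)$ into three types: those disjoint from $\{v_k, v_{k+1}\}$; those containing the pair $\{v_k, v_{k+1}\}$; and those pairing exactly one of these two vertices with some other vertex. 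The first and third types contribute identically to $\chi^{-1}(D_<)$ and to $\chi^{-1}(D_>)$, since both the products of $\omega$'s on the other pairs and the glued diagrams are insensitive to the relative order of $v_k, v_{k+1}$, so they cancel. The second type contributes with a sign difference coming from $\omega(y,x) = -\omega(x,y)$; a size-$p$ matching doubles up to produce $\omega(x,y)/2^{p-1}$ times the fully glued diagram, and substituting $p' = p-1$ re-indexes the result precisely as $\omega(x,y)\, \chi^{-1}(D^{=})$.

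Next, I would verify both compositions. For $\chi^{-1} \circ \chi = \mathrm{id}_{\A\sHQ}$, the empty-matching contribution to $\chi^{-1}(\chi(D))$ reassembles into $D$, while the coefficient of each glued diagram coming from a nonempty matching $\pi$ on the vertices of $D$ is proportional to $\sum_\sigma \prod_{\{u,v\} \in \pi} \omega(c(u),c(v))$ with signs depending on the order $\sigma$ induces on each pair. Pairing each $\sigma$ with the ordering obtained by swapping the two endpoints of any fixed pair of $\pi$ defines a sign-reversing involution on the set of orderings, so this sum vanishes and only the leading term survives. For $\chi \circ \chi^{-1} = \mathrm{id}_{\A^<\sHQ}$, the symmetrization of the matching terms in $\chi^{-1}(D)$ is analysed by the same swap-pairing argument, with the STU-like identity verified above used to absorb the residual corrections.

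I expect the main obstacle to be the STU-like compatibility check, where the normalization $1/2^p$ in (\ref{eq:chi_inverse}) and the sign flip from $\omega(x,y) = -\omega(y,x)$ must conspire to produce exactly the full expansion of $\chi^{-1}(D^{=})$. Any miscount by a constant factor would break the inverse. Once this calibration is in place, the two compositions reduce to the same sign-reversing involution on vertex swaps.
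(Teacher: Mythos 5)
Your first two steps coincide with the paper's proof. The STU-compatibility check for the candidate inverse (three types of matchings; the first and third cancel between the two orderings, the pairs containing $\{v_k,v_{k+1}\}$ double up to give $\omega(x,y)/2^{p-1}$ and re-index to $\omega(x,y)\,\chi^{-1}(D^{=})$) is exactly the computation the paper does to define the map $\sigma$ on $\AgHQ$. Your sign-reversing involution for $\chi^{-1}\circ\chi=\Id$ is also the paper's argument in different clothing: the paper lifts everything to the auxiliary space $\AtildeHQ$ (ordered vertices, no STU-like relation), writes $\widetilde\sigma=\exp_\circ(\widetilde\gamma/2)$ for the contraction operator $\widetilde\gamma$, and observes $\widetilde\gamma\circ\widetilde\chi=0$ --- which is precisely your endpoint-swap involution killing every nonempty matching.

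The soft spot is your last step, $\chi\circ\chi^{-1}=\Id_{\AgHQ}$. Here ``the same swap-pairing argument'' does not apply as stated: in $\AgHQ$ the glued diagrams $D_\pi$ reappear only after symmetrizing, and comparing $\chi(\sigma(D))$ with $D$ forces you to work modulo the STU-like relation, i.e.\ the ``residual corrections'' you mention are exactly the content to be proved, and your one-sentence appeal to the earlier compatibility check does not establish it. The paper avoids this direction altogether: it proves instead that $\chi$ is \emph{surjective}, by an easy induction on the number of external vertices (the STU-like relation gives $\chi(D)=D$ plus terms with fewer external vertices), and then $\sigma\circ\chi=\Id$ together with surjectivity of $\chi$ immediately yields $\chi\circ\sigma=\Id$. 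You should either adopt that shortcut or actually carry out the second composition (for instance by an induction of the same flavor); note that $\sigma\circ\chi=\Id$ alone does not give bijectivity, since the equality of dimensions of the graded pieces of $\AHQ$ and $\AgHQ$ is not known beforehand.
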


\begin{proof}
Let $\sigma(D)$ be the quantity defined by the right term of (\ref{eq:chi_inverse}),
for all Jacobi diagram $D$ colored by $\HQ$ and with external vertices  $v_1 < \cdots < v_e$.
Let $(l,l+1)\cdot D$ be the same diagram, but with the order of $v_l$ and $v_{l+1}$ reversed.
Then, in the difference $\sigma(D) - \sigma\left((l,l+1)\cdot D\right)$, all terms
cancel except for those corresponding to pairings that match $v_l$ and $v_{l+1}$:
\begin{eqnarray*}
&&\sigma(D) - \sigma\left((l,l+1)\cdot D\right)\\
&=& \sum_{p=1}^{[e/2]} \frac{1}{2^{p-1}} 
\sum_{\substack{\{i_1,j_1\},\dots, \{i_p,j_p\} \\ \exists r, (i_r,j_r)=(l,l+1)  }}\
\prod_{k=1}^p \omega\left(c(v_{i_k}),c(v_{j_k})\right)\  \cdot
D_{(v_{i_1} = v_{j_1},\dots,v_{i_p} = v_{j_p})}\\
&=& \omega\left(c(v_l),c(v_{l+1})\right) \cdot \sigma\left(D_{(v_l=v_{l+1})}\right).
\end{eqnarray*}
Thus, the STU-like relation is satisfied, 
and we get a linear map $\sigma:\AgHQ \to \AHQ$.

Let $D$ be a Jacobi diagram colored by $\HQ$ and with external vertices  $v_1 < \cdots < v_e$.
The STU-like relation implies that $\chi(D) = D$ modulo some terms with fewer external vertices.
(Here, the $D$ to which $\chi$ applies is obtained from $D$ by forgetting the order.)
Moreover,  $\chi(D) = D$ if $D$ has no external vertex $(e=0)$. 
This proves, by an induction on $e$, that
$D$ belongs to the image of $\chi$. So, $\chi$ is surjective.

Thus, it is enough to prove that $\sigma \circ \chi$ is the identity. 
For this, we define the space
\begin{equation}
\label{eq:big_space}
\widetilde{\A}\sHQ := \frac{\Q\cdot \left\{ \begin{array}{c} 
\hbox{Jacobi diagrams without strut component and with}\\
\hbox{external vertices totally ordered and colored by } \HQ  \end{array} \right\}}
{\hbox{AS, IHX, multilinearity}},
\end{equation}
which we simply denote by $\AtildeHQ$.
(The space $\widetilde{\A}\sHQ$ is used also in the proof of Theorem \ref{th:weight_systems}.)
Thus, the quotient of $\AtildeHQ$ by the STU-like relation is $\AgHQ$
while its quotient by the ``forgetting orders'' relation is $\AHQ$.
Let $\widetilde{\gamma}: \AtildeHQ \to \AtildeHQ$ be the linear map
defined on each Jacobi diagram $D$ with external vertices $v_1 < \cdots < v_e$  by
$$
\widetilde{\gamma}(D) := \sum_{1 \leq i<j \leq e} \omega(c(v_i),c(v_j)) \cdot D_{(v_i=v_j)}
$$
if $e\geq 2$, and by $\widetilde{\gamma}(D)=0$ if $e=0,1$. 
Then, we define $\widetilde{\sigma}: \AtildeHQ \to \AtildeHQ$ by 
$$
\widetilde{\sigma} := \exp_\circ\left( \widetilde{\gamma}/2 \right) =
\sum_{k\geq 0} \frac{\widetilde{\gamma}^k}{2^k k!}.
$$
Let also $\widetilde{\chi}: \AtildeHQ \to \AtildeHQ$ be the ``symmetrization'' map 
sending all Jacobi diagram $D$ to
$$
\widetilde{\chi}(D) = \frac{1}{e!} \cdot 
\left(\hbox{sum of all ways of permuting the $e$ external vertices of $D$}\right).
$$
The following diagram is commutative:
$$
\xymatrix{
{\AtildeHQ} \ar[r]^-{\widetilde{\chi}} \ar@{->>}[d] & 
\!\!{\AtildeHQ} \ar[r]^-{\widetilde{\sigma}} \ar@{->>}[d] & {\AtildeHQ} \ar@{->>}[d]\\
{\AHQ} \ar[r]^-{\chi} & \!\! {\AgHQ} \ar[r]^-{\sigma} & {\AHQ.}
}
$$
Since $\widetilde{\gamma} \circ \widetilde{\chi} = 0$, we have 
$\widetilde{\sigma} \circ \widetilde{\chi} = \widetilde{\chi}$,
which implies that $\sigma \circ \chi =\Id$.
\end{proof}

Thus, we can pull back by $\chi$ the product on $\AgHQ$ 
to an associative multiplication $\star$ on the space $\AHQ$, i.e$.$ we set
\begin{gather}
  \label{e6}
  D\star E := \chi^{-1}\left(\chi(D)\osqcup\chi(E)\right)
\end{gather}
for all $D,E\in\AHQ$. More generally, the full Hopf algebra structure
on $\AgHQ$ gives one for $\AHQ$. The comultiplication in $\AHQ$ is
given on a Jacobi diagram $D$ by
\begin{gather*}
\Delta(D)=\sum_{D=D'\sqcup D''} D'\otimes D'',
\end{gather*}
the counit is given by $\varepsilon(D)=\delta_{D,\emptyset}$, and the
antipode is the unique algebra anti-automorphism satisfying 
$S(D) = -D$ if $D$ is connected and non-empty.
The primitive part $\Prim(\AHQ)$ of $\AHQ$
is the subspace $\AcHQ$ spanned by the connected diagrams.

\begin{definition}
The Hopf algebra of \emph{symplectic Jacobi diagrams} is 
$\left(\A,\varnothing,\star,\varepsilon,\Delta,S\right)$.
\end{definition}

The multiplication $\star$ of Jacobi diagrams can also be defined directly as follows:

\begin{proposition}
\label{prop:multiplication}
Let $D,E\in\AHQ$ be Jacobi diagrams colored by $\HQ$,
and whose sets of external vertices are denoted by $V$ and $W$ respectively.
Then, we have 
$$
D \star E = \sum_{\substack{V' \subset V,\ W' \subset W\\ 
\beta\ :\ V' \stackrel{\simeq}{\longrightarrow} W'}}\
\frac{1}{2^{|V'|}} \cdot \prod_{v\in V'} \omega\left(c(v),c(\beta(v))\right)\ \cdot (D \cup_\beta E).
$$
Here, the sum is taken over all ways of identifying a subset $V'$ of $V$ with a subset $W'$ of $W$,
and $D \cup_\beta E$ is obtained from $D \sqcup E$
by gluing each vertex $v \in V'$ to $\beta(v) \in W'$. 
\end{proposition}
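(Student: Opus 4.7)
The plan is to unravel the definition $D\star E := \chi^{-1}(\chi(D)\osqcup\chi(E))$ using the explicit formula (\ref{eq:chi_inverse}) for $\chi^{-1}$ from Proposition \ref{lem:chi}, and then to identify the surviving contributions as the cross-pairings appearing in the statement. I would first unfold the averages as
$$
\chi(D)\osqcup\chi(E) \;=\; \frac{1}{|V|!\,|W|!}\sum_{\pi,\rho} D^\pi\osqcup E^\rho,
$$
where $\pi$ and $\rho$ range over the symmetric groups on $V$ and $W$, and $D^\pi\osqcup E^\rho$ denotes the diagram $D\sqcup E$ with external vertices ordered by $\pi$ on $V$ followed by $\rho$ on $W$. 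Applying (\ref{eq:chi_inverse}) to each summand produces a sum indexed by an integer $p\ge 0$ and by unordered sets of $p$ disjoint pairs of positions in $\{1,\dots,|V|+|W|\}$, each weighted by $1/2^p$ times the product of the symplectic forms on the colors of the paired vertices.

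The key step is to classify each pairing according to whether a given pair joins two positions in $V$, two positions in $W$, or one of each kind (a \emph{cross-pairing}), and to show that only cross-pairings contribute after averaging. For any pairing having a within-$V$ pair $(i,j)$ (that is, with $i,j\le|V|$), composing $\pi$ with the transposition $(i,j)$ defines an involution on $(\pi,\rho)$ that preserves the glued diagram, the orderings of the remaining external vertices, and every other $\omega$-weight, but flips the sign of the factor $\omega(c(v_{\pi^{-1}(i)}),c(v_{\pi^{-1}(j)}))$ by antisymmetry of $\omega$. Hence all such contributions cancel pairwise, and a symmetric argument with $\rho$ disposes of the within-$W$ pairings.

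For a purely cross-pairing realizing a given bijection $\beta\colon V'\simeqto W'$ with $|V'|=|W'|=p$, the whole positional datum is determined by $(\pi,\rho)$: the $V'$-positions form $I=\pi(V')$, the $W'$-positions form $J=|V|+\rho(W')$, and the unique positional pairing $I\to J$ realizing $\beta$ sends $i\in I$ to $|V|+\rho(\beta(\pi^{-1}(i)))$. Hence each $\beta$ is obtained from exactly $|V|!\,|W|!$ triples $(\pi,\rho,\text{positional pairing})$, each giving the same underlying (unordered) diagram $D\cup_\beta E$ with weight $\frac{1}{2^p}\prod_{v\in V'}\omega(c(v),c(\beta(v)))$. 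The count cancels the normalization $1/(|V|!\,|W|!)$ from the first step, and summing over $(V',W',\beta)$ yields the announced formula.

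I expect the main obstacle to lie in the clean bookkeeping for the cancellation in the second step, since one must check that the chosen involution respects all other pairs present, which is guaranteed here by the fact that the remaining pairs involve positions disjoint from $\{i,j\}$. An alternative and perhaps more conceptual route is to lift the whole computation to the auxiliary space $\AtildeHQ\sHQ$ introduced in the proof of Proposition \ref{lem:chi}, where $\chi^{-1}$ is induced by $\widetilde{\sigma}=\exp_\circ(\widetilde{\gamma}/2)$, and to realize the cancelling involutions directly at the level of $\widetilde{\gamma}^k$ applied to the explicit lift of $\chi(D)\osqcup\chi(E)$ in $\AtildeHQ\sHQ$.
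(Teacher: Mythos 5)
Your argument is correct and follows essentially the same route as the paper's own proof: expand $\chi(D)\osqcup\chi(E)$ as an average over orderings, apply formula (\ref{eq:chi_inverse}), cancel the non-mixed (within-$V$ or within-$W$) gluings by the sign-reversing transposition involution, and count the surviving cross-pairings to absorb the normalization $1/(|V|!\,|W|!)$. Your write-up merely makes the final counting step more explicit than the paper does.
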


\noindent
Consequently, the commutator $[D,E]_\star=D\star E -E \star D$ of $D$ and $E$ is given by
\begin{equation}
\label{eq:commutator}
[D,E]_\star = \sum_{\substack{ \beta\ :\  V \supset V' \stackrel{\simeq}{\longrightarrow} W'  \subset W\\
|V'| = |W'| \equiv 1\ \mod\ 2}}\
\frac{1}{2^{|V'|-1}}\prod_{v\in V'} \omega(c(v),c(\beta(v)))\
\cdot (D \cup_\beta E),
\end{equation}
where the sum is taken over all ways of identifying a subset $V'$ of $V$ 
of odd cardinality with a subset $W'$ of $W$.

\begin{proof}[Proof of Proposition \ref{prop:multiplication}]
Denote the external vertices of $D$ by $v_1,\dots,v_d$, and those of $E$ by $w_1,\dots,w_e$.
By \eqref{e6}, we have
$$
D \star E  = \sum_{\delta \in S_d,\ \varepsilon \in S_e}  
\frac{1}{d! \cdot e!} \cdot \chi^{-1}\left(  D^\delta\ \osqcup\  E^\varepsilon \right),
$$
where $\delta$ is a permutation of $\{1,\dots,d\}$ and $D^\delta \in \AgHQ$ is obtained from $D$ 
by ordering its external vertices as  $v_{\delta(1)} < \cdots < v_{\delta(d)}$. 
The diagram $E^\varepsilon$ is defined similarly from the permutation $\varepsilon$ of $\{1,\dots,e\}$.
If we apply formula (\ref{eq:chi_inverse}) 
to $\chi^{-1}\left(  D^\delta\ \osqcup\  E^\varepsilon \right)$, 
two kinds of terms appear in the resulting sum: Either, the gluings performed on 
$D^\delta\ \osqcup\  E^\varepsilon$ are all ``mixed'', 
or at least one gluing is not mixed and involves, say, two external vertices of $D^\delta$.
In the latter case, the corresponding term will appear with an opposite sign in 
$\chi^{-1}(D^{\tau \circ \delta}\ \osqcup\ E^\varepsilon)$ 
where $\tau \in S_d$ is the transposition of the indices
of those two external vertices. Thus, we can assume that formula 
(\ref{eq:chi_inverse}) applied to $\chi^{-1}\left(  D^\delta\ \osqcup\ E^\varepsilon \right)$
involves only ``mixed'' gluing, in which case the orderings of the external vertices in $D^\delta$
and in $E^\varepsilon$ do not matter. The conclusion follows.
\end{proof}

There is an obvious action of $\Sp(H_\Q)$ on $\A\sHQ$,
such that $\chi$ is $\SpHQ$-equivariant.
Thus, the Hopf algebra structure on $\A\sHQ$ is compatible with this $\SpHQ$-action.
In particular, the Lie bracket $[-,-]_\star$ on $\A^c(H_\Q)$ is $\SpHQ$-equivariant.

\begin{remark}
  \label{r9}
  Garoufalidis and Levine \cite{GL_tree-level} attempted to define a
  Lie bracket on the graded space $\Ac{}$, but, as pointed out by
  Habegger and Sorger \cite{HS}, their Lie bracket is \emph{not}
  $\SpHQ$-equivariant so that \cite[Theorem 6]{GL_tree-level} fails.
  Yet the approach in \cite{GL_tree-level} can be fixed at the tree
  level, as is done in \cite[\S 3]{HS}.
\end{remark}

\subsection{Loop filtration}

\label{subsec:loop_filtration}

The {\em loop degree} of a Jacobi diagram is defined to be its first
Betti number.\footnote{In the literature, the loop degree is sometimes
defined as the first Betti number minus $1$.}  
For example, the loop degree of a
tree diagram is $0$, and the loop degree of $\thetagraph$ is $2$.
The loop degree is additive under disjoint union of diagrams.

Let $\F_k(\AgHQ)$ be the subspace of $\AgHQ$ spanned by the
Jacobi diagrams of loop degree at least $k$.  We have a filtration
$$
  \AgHQ=
  \F_0(\AgHQ)\supset
  \F_1(\AgHQ)\supset
  \F_2(\AgHQ)\supset\cdots.
$$
This filtration induces a filtration on $\A_i^<$, for each $i\geq 1$, and is an algebra filtration:
\begin{gather*}
  \F_k(\AgHQ)\;\osqcup\;\F_l(\AgHQ)\subset\F_{k+l}(\AgHQ).
\end{gather*}

Similarly, let $\F_k(\AHQ)$ be the subspace of $\AHQ$ spanned by
the Jacobi diagrams of loop degree at least $k$. Again, we have a filtration 
$$
  \AHQ=
  \F_0(\AHQ)\supset
  \F_1(\AHQ)\supset
  \F_2(\AHQ)\supset\cdots,
$$
which induces a filtration on $\A_i$, for each $i\geq 1$, and is an algebra filtration:
\begin{gather*}
  \F_k(\AHQ)\;\star\;\F_l(\AHQ)\subset\F_{k+l}(\AHQ).
\end{gather*}
The above two filtrations are connected by the symmetrization isomorphism:
$$
\chi(\F_k(\AHQ))=\F_k(\AgHQ).
$$

The above algebra filtrations also induce Lie algebra filtrations on $\AgcHQ$ and $\AcHQ$. 
To be more specific, if we define
\begin{gather*}
  \F_k(\AgcHQ) := \F_k(\AgHQ) \cap \AgcHQ \quad \hbox{and} \quad
  \F_k(\AcHQ) := \F_k(\AHQ) \cap \AcHQ,
\end{gather*}
then we have $[\F_k,\F_l]\subset \F_{k+l}$ in the two cases.

These ``loop filtrations'' are closely related to clasper calculus.  
For example, one can prove that if a graph clasper $C$ in the trivial cylinder over $\Sigma_{g,1}$ has $k$ loops, 
then the LMO invariant of the homology cylinder obtained  by surgery along $C$ belongs to $\varnothing + \F_k(\AgHQ)$.

For each $i\ge1$, the internal degree $i$ part of $\A^c$ is itself graded as a vector space by the loop degree:
\begin{gather}
  \label{e9}
  \A^c_i=\bigoplus_{0\le k\le d_i}\Ac{i,k},
\end{gather}
where the bound is $d_i=(i+2)/2$ if $i$ is even, and $d_i=(i-1)/2$ if $i$ is odd\footnote{
This can be checked from the fact that a Jacobi diagram with only one external vertex vanishes in the space $\A$:
See for instance \cite{Vogel}.}.
Set
\begin{gather*}
  \A^{c}_{*,\ev} :=\bigoplus_{k\;\text{even},\;i} \Ac{i,k}
  \quad \quad \hbox{and}\quad \quad
  \A^{c}_{*,\od} :=\bigoplus_{ k\;\text{odd},\;i} \Ac{i,k}.
\end{gather*}
Then \eqref{eq:commutator} implies that $\A^{c}_{*,\ev}$ is a Lie
subalgebra of $\AcHQ$, and that the decomposition
\begin{gather*}
  \AcHQ=\A^{c}_{*,\ev} \oplus \A^{c}_{*,\od}
\end{gather*}
defines a $\Z/2\Z$-graded Lie algebra structure on $\AcHQ$. In particular, we have
\begin{gather}
  \label{eq:a}
  a\sHQ\subset \A^{c}_{*,\ev}\sHQ,
\end{gather}
where $a\sHQ$ is the Lie subalgebra of $\A^c\sHQ$ generated by the degree $1$ part $\Ac{1}$.

\subsection{Weight systems and the Moyal--Weyl product}

\label{subsec:MW}

Let us recall the definition of the Moyal--Weyl product.
For this, we consider a $\Q$-vector space $V$
together with a symplectic form $s:V \otimes V \to \Q$.

\begin{definition}
The \emph{Weyl algebra} generated by $V$ is the quotient of the tensor algebra 
of $V$ by the relations ``$\ u\otimes v - v \otimes u= s(u,v)\ $'':
$$
\mathcal{W}(V) := T(V) \left/ 
\langle u\otimes v - v \otimes u - s(u,v)\ |\ u,v \in V\rangle_{\rm{ideal}} \right. .
$$
\end{definition}

The ``symmetrization'' map $\chi: S(V) \longrightarrow \mathcal{W}(V)$ is defined by
$$
\chi(v_1 \cdot \dots \cdot v_n)
:=  \frac{1}{n!} \sum_{ \sigma \in S_n } 
\left\{ v_{\sigma(1)} \otimes \cdots \otimes v_{\sigma(n)} \right\}.
$$ 
By formally the same argument as in Proposition \ref{lem:chi}, 
it can be shown that $\chi$ is an isomorphism, which justifies the following.

\begin{definition}
The associative multiplication on the vector space $S(V)$ 
corresponding to $\otimes$ on $\mathcal{W}(V)$
is denoted by $\star$ and is called the \emph{Moyal--Weyl product}.
\end{definition}

\begin{remark}
Recall that a \emph{deformation quantization} of a Poisson algebra $(A,\cdot,\{-,-\})$
is a $\Q[[h]]$-linear associative multiplication $\star_h$ on the space $A[[h]]$, such that 
$a\star_h b = a \cdot b + O(h)$ and $a\star_h b - b\star_h a = \{a,b\} \cdot h + O(h^2)$,
for all $a,b \in A$. See \cite{BFFLS}.
 
We are considering here the commutative algebra $(S(V),\cdot)$ 
with Poisson bracket defined by $\{u,v\}:= s(u,v)$ for all $u,v\in V$.
The \emph{Moyal--Weyl product} usually refers to its deformation quantization 
$$
\star_h : S(V)[[h]] \otimes S(V)[[h]] \longrightarrow S(V)[[h]]
$$  
defined, for all $A,B \in S(V)$, by 
$$
A \star_h B := 
\sum_{l = 0}^\infty \frac{h^l}{2^l l!}
\sum_{\substack{i_1,\dots, i_l \in \{1,\dots,d\} \\ j_1,\dots, j_l \in \{1,\dots,d\} }}
\left(\prod_{k=1}^l s(x_{i_k},x_{j_k})\right)
 \frac{\partial^l A}{\partial x_{i_1} \cdots \partial x_{i_l}}
\frac{\partial^l B}{\partial x_{j_1} \cdots \partial x_{j_l}}.
$$

\noindent
Here, a basis $(x_1,\dots,x_d)$ of $V$ has been chosen so that $S(V)$ 
is identified with the polynomial algebra $\Q[x_1,\dots,x_d]$.
Using an analogue of Proposition  \ref{prop:multiplication}  for the product
$\star$ on $S(V)$, it is easily checked that $\star$ coincides with $\star_h$ at $h=1$.
\end{remark}

To connect the multiplication $\star$ of symplectic Jacobi diagrams to the Moyal--Weyl product,
we consider a \emph{metrized} Lie algebra $\mathfrak{g}$.
Thus, $\mathfrak{g}$ is a finite-dimensional Lie algebra 
together with a symmetric bilinear form
$\kappa: \mathfrak{g} \times \mathfrak{g} \to \Q$, which is $\mathfrak{g}$-invariant and non-degenerate.

It is well-known that such a data defines a linear map $\A(\varnothing) \to \Q[[t]]$,
called the \emph{weight system} associated to $\mathfrak{g}$: 
This is the case of homology spheres considered in \cite{LMO} or, equivalently, 
the case of homology cylinders of genus $g=0$. This construction extends to higher genus as follows.
First, we equip $\mathfrak{g} \otimes \HQ$  with the symplectic form $\kappa \otimes \omega$,
where $\omega$ is the intersection pairing on $\sgg$.

\begin{theorem}
\label{th:weight_systems}
We can define non-trivial algebra homomorphisms
$$
W_{\mathfrak{g}} : \ \left(\ \A^<\sHQ\ ,\ \osqcup\ \right) \longrightarrow 
\left(\ \mathcal{W}\left(\mathfrak{g} \otimes \HQ \right)[t]\ ,\ \otimes\ \right)
$$
and
$$
W_{\mathfrak{g}} : \ (\ \A\sHQ\ ,\ \star\ ) \longrightarrow 
\left(\ S(\mathfrak{g} \otimes \HQ)[t]\ ,\ \star\ \right),
$$
sending the internal degree to the $t$-degree and
such that the following diagram commutes in the category of graded algebras:
\begin{equation}
\label{eq:weight_systems}
\xymatrix{
{\A^<\sHQ} \ar[rr]^-{W_{\mathfrak{g}}} & &
{\mathcal{W}(\mathfrak{g} \otimes \HQ)[t]} \\
{\A\sHQ} \ar[u]^-{\chi}_-\simeq \ar[rr]_-{W_{\mathfrak{g}}} & &
{S(\mathfrak{g} \otimes \HQ)[t]}. \ar[u]_-{\chi}^-\simeq 
}
\end{equation}
\end{theorem}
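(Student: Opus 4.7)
The plan is to first build $W_{\mathfrak{g}}$ on the larger space $\AtildeHQ$ introduced in \eqref{eq:big_space} (which imposes only AS, IHX and multilinearity), with values in the tensor algebra $T(\mathfrak{g}\otimes\HQ)[t]$, and then check that it descends to the Weyl algebra quotient. Pick dual bases $(e_a),(e^a)$ of $\mathfrak{g}$ relative to $\kappa$, so that the Casimir is $c=\sum_a e_a\otimes e^a\in\mathfrak{g}\otimes\mathfrak{g}$. For a Jacobi diagram $D\in\AtildeHQ$ with $i$ internal vertices and external vertices $v_1<\cdots<v_e$ colored by $x_1,\dots,x_e\in\HQ$, decorate each edge by $c$, contract at every internal trivalent vertex by the Lie bracket (using the cyclic order), and read off at each $v_k$ the remaining factor $A_k\in\mathfrak{g}$. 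Then set
\begin{equation*}
W_{\mathfrak{g}}(D) := t^i \cdot (A_1\otimes x_1) \otimes \cdots \otimes (A_e\otimes x_e) \in T(\mathfrak{g}\otimes\HQ)[t].
\end{equation*}
This is multiplicative under $\osqcup$ since $\osqcup$ concatenates external vertices in the prescribed order, and nontriviality follows from $W_{\mathfrak{g}}(\varnothing)=1$ as soon as $\mathfrak{g}\neq 0$.

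Antisymmetry of the bracket yields the AS relation, the Jacobi identity together with ad-invariance of $\kappa$ yields the IHX relation, and multilinearity is clear; hence $W_{\mathfrak{g}}$ already factors through $\AtildeHQ$. The heart of the argument is the STU-like relation, which I verify after composing with the projection onto $\mathcal{W}(\mathfrak{g}\otimes\HQ)[t]$. If $D$ and $(l,l+1)\cdot D$ differ only by swapping two adjacent external vertices $v_l,v_{l+1}$ colored by $x,y$, the difference of their images is
\begin{equation*}
(A_l\otimes x)\otimes(A_{l+1}\otimes y)-(A_{l+1}\otimes y)\otimes(A_l\otimes x),
\end{equation*}
which in the Weyl algebra equals the symplectic pairing $(\kappa\otimes\omega)(A_l\otimes x,A_{l+1}\otimes y)=\kappa(A_l,A_{l+1})\,\omega(x,y)$. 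The scalar $\kappa(A_l,A_{l+1})$ is exactly the contraction obtained by gluing the half-edges at $v_l$ and $v_{l+1}$ through the metric: when expanded in dual bases it reproduces the evaluation of the $\mathfrak{g}$-part of $D_{(v_l=v_{l+1})}$. This matches $\omega(x,y)\cdot W_{\mathfrak{g}}(D_{(v_l=v_{l+1})})$, which is the STU-like relation, and yields the desired algebra map $W_{\mathfrak{g}}\colon\AgHQ\to\mathcal{W}(\mathfrak{g}\otimes\HQ)[t]$.

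For the symmetric version, I simply set $W_{\mathfrak{g}}^S:=\chi^{-1}\circ W_{\mathfrak{g}}\circ\chi$, so that the diagram \eqref{eq:weight_systems} commutes by construction. Unfolding the averaging formulas for the two $\chi$'s one sees that $W_{\mathfrak{g}}^S(D)=t^i\cdot(A_1\otimes x_1)\cdots(A_e\otimes x_e)$, the product now being taken in $S(\mathfrak{g}\otimes\HQ)$. Multiplicativity of $W_{\mathfrak{g}}^S$ with respect to the Moyal--Weyl product $\star$ on $S(\mathfrak{g}\otimes\HQ)[t]$ is then automatic, because that $\star$ is pulled back from $\otimes$ on $\mathcal{W}(\mathfrak{g}\otimes\HQ)$ via $\chi$ in exactly the same way as \eqref{e6} pulls back $\osqcup$ via $\chi$. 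The main obstacle is the local-to-global bookkeeping in the STU-like check: one has to make sure that, upon swapping $v_l$ and $v_{l+1}$, the Casimir contractions outside the two vertices recombine correctly to produce the evaluation of $D_{(v_l=v_{l+1})}$. This is the standard invariance argument for weight systems, and it reduces to the ad-invariance of $\kappa$.
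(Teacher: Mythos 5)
Your proposal is correct and follows essentially the same route as the paper: evaluate diagrams on $\AtildeHQ\sHQ$ by placing the Casimir ($K$ dual to $\kappa$) on edges and the structure tensor $\kappa([\cdot,\cdot],\cdot)$ at internal vertices, use antisymmetry/Jacobi for AS/IHX, and descend to $\A^<\sHQ$ via the STU-like relation, which as you compute amounts to the contraction identity gluing two Casimirs through $\kappa$ back into one; your only (cosmetic) deviation is defining the symmetric-side map as $\chi^{-1}\circ W_{\mathfrak{g}}\circ\chi$, which agrees with the paper's directly defined map and makes the square commute tautologically. One small slip of attribution: the STU-like verification uses only the duality between $K$ and $\kappa$ (ad-invariance of $\kappa$ is what IHX needs), but this does not affect the argument.
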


\begin{proof}
Let $K \in S^2 \mathfrak{g}$ be the $2$-tensor corresponding 
to $\kappa \in  S^2 \mathfrak{g}^* \simeq (S^2 \mathfrak{g})^*$ 
by the isomorphism $\mathfrak{g} \to \mathfrak{g}^*$ adjoint to $\kappa$. 
Let also $B \in \Lambda^3 \mathfrak{g}^* \simeq \left(\Lambda^3 \mathfrak{g}\right)^*$ 
be the alternating trilinear form defined by $x \wedge y \wedge z \mapsto \kappa([x,y],z)$.

Then, any Jacobi diagram $D$ whose external vertices are numbered from $1$ to $e$ 
defines a tensor in $\mathfrak{g}^{\otimes e}$: Each internal vertex is replaced by a copy of $B$, 
each edge by a copy of $K$ and contractions are performed. 
If $D$ is now colored by $\HQ$, then we get a tensor $\widetilde{w}_{\mathfrak{g}}(D)$ in 
$\mathfrak{g}^{\otimes e} \otimes \HQ^{\otimes e} \simeq (\mathfrak{g} \otimes \HQ)^{\otimes e}$.
Let $\widetilde{\A}\sHQ$ be the space of Jacobi diagrams defined at (\ref{eq:big_space}) 
and define a linear map 
$$
\widetilde{W}_{\mathfrak{g}}: \widetilde{\A}\sHQ 
\longrightarrow T(\mathfrak{g} \otimes \HQ)[t]
$$
by $D\mapsto \widetilde{w}_{\mathfrak{g}}(D) \cdot t^{i}$ 
for all Jacobi diagram $D$ with $i$ internal vertices.
This map is well-defined since the antisymmetry and the Jacobi identity
satisfied by the Lie bracket of $\mathfrak{g}$ are mapped to the AS
and IHX relations, respectively, as usual.
Observe that the ordered disjoint union $\osqcup$ defines 
an associative multiplication on $\widetilde{\A}\sHQ$,
and that $\widetilde{W}_{\mathfrak{g}}$ is then multiplicative.

Because the contraction of $K \otimes \kappa \otimes K$ gives $K$, the map 
$\widetilde{W}_{\mathfrak{g}}$ induces a linear map 
$W_{\mathfrak{g}}: \A^<\sHQ \to \mathcal{W}\left(\mathfrak{g} \otimes \HQ\right)[t]$.
Obviously, the multiplicativity of the former implies the multiplicativity of the latter.

Clearly, the  map $\widetilde{W}_{\mathfrak{g}}$ 
induces a linear map $W_{\mathfrak{g}}: \A\sHQ \to \hbox{S}(\mathfrak{g} \otimes \HQ)[t]$ as well.
The commutativity of (\ref{eq:weight_systems}) follows from the definitions 
and shows that the multiplicativity of its top map implies that of its bottom map.
\end{proof}

\begin{remark}
Since the form $\kappa$ is $\mathfrak{g}$-invariant, the above two maps $W_{\mathfrak{g}}$
actually take values on the $\mathfrak{g}$-invariant subspaces.
\end{remark}

\vspace{0.5cm}

\section{Algebraic description of the mapping cylinder construction}

\label{sec:mapping_cylinder_construction}

In this section, we prove Theorem \ref{th:diagram}, which 
gives an algebraic description of the mapping cylinder
construction $\mapcyl: \Igg \to \Cgg$ at the level of graded Lie algebras.
To start with, we shall describe the action of $\Sp(H_\Q)$ 
on the four graded Lie algebras of diagram (\ref{eq:diagram}).

\subsection{Symplectic actions}

We start by recalling how $\Sp(H_\Q)$ acts on $\Gr^\Gamma\Igg\oQ$.
The conjugation action of the mapping class group
$\Mgg$ on $\Igg$ induces an $\Sp(H)$-action on
$\Gr^\Gamma \Igg$ such that the Lie bracket is $\Sp(H)$-equivariant.
There is also the standard action of $\Sp(H)$ on $\Lambda^3 H$, 
and the first Johnson homomorphism is $\Sp(H)$-equivariant:
Thus, for $g\geq 3$, the Lie algebra map
$$
J: \Lie(\Lambda^3 H_\Q) \longrightarrow \Gr^\Gamma\Igg\oQ
$$ 
is $\Sp(H)$-equivariant. 
It follows that $\Ker(J)$ is an $\Sp(H)$-submodule and so,
by the ``algebraicity lemma'' of \cite[\S 2.2.8]{AN}, it is an $\Sp(H_\Q)$-submodule as well. 
Consequently, the Lie algebra epimorphism $J$ transports 
the action of $\Sp(H_\Q)$ on $\Lie(\Lambda^3 H_\Q)$ 
onto an action of $\Sp(H_\Q)$ on $\Gr^\Gamma\Igg\oQ$,
and this extends the canonical action of $\Sp(H)$.

Let us now specify how $\Sp(H_\Q)$ acts on the Lie algebra of homology cylinders.

\begin{lemma}
\label{lem:action}
Let $g\geq 0$.  There is a natural action of $\Sp(H)$ on the
Lie algebra $\Gr^Y \Cgg$, which is compatible with the usual action of $\Sp(H)$ on  $\Gr^\Gamma \Igg$.
Moreover, the Lie bracket of $\Gr^Y \Cgg$ is $\Sp(H)$-equivariant.
\end{lemma}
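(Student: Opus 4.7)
The plan is to build the $\Sp(H)$-action on $\Gr^Y\Cgg$ from a natural $\Mgg$-action on $\Cgg$, to show that the Torelli subgroup acts trivially on the associated graded, and to deduce the remaining properties from standard features of Lie algebras associated to $N$-series.

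The first step is to define an $\Mgg$-action on the monoid $\Cgg$ by twisting boundary parameterizations. For $\phi\in\Mgg$ and a homology cylinder $(M,m)\in\Cgg$, set $\phi\cdot(M,m):=(M,m\circ(\phi\sqcup\phi))$. Both boundary inclusions of the resulting cobordism agree on homology (they are the old ones precomposed with $\phi_*$), so $\phi\cdot(M,m)$ is again a homology cylinder. For $\phi\in\Igg$ this action coincides with conjugation by $\mapcyl(\phi)\in\Cgg$; more generally it can be described as conjugation by $\mapcyl(\phi)$ in the enlarged cobordism category where non-Torelli mapping cylinders are invertible. Naturality of clasper surgery under self-homeomorphisms of $\sgg$ guarantees that this action preserves the $Y$-filtration (\ref{eq:Y-filtration}); it therefore descends to the nilpotent quotient $\Cgg/Y_i$ for every $i$, and to an $\Mgg$-action on $\Gr^Y\Cgg$ by graded group automorphisms.

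The second step is to show that $\Igg\subset\Mgg$ acts trivially on $\Gr^Y\Cgg$. For $\phi\in\Igg$, the induced action on $\Cgg/Y_{i+1}$ is conjugation by $\mapcyl(\phi)$, and the $N$-series inclusion (\ref{eq:N-series}) with $j=1$, $k=i$, $l=i+1$ gives
$$
[\,Y_1\Cgg/Y_{i+1}\,,\;Y_i\Cgg/Y_{i+1}\,]\subset Y_{i+1}\Cgg/Y_{i+1}=\{1\}.
$$
Hence $Y_i\Cgg/Y_{i+1}$ is central in the whole group $\Cgg/Y_{i+1}$, so conjugation by $\mapcyl(\phi)$ is trivial on it, and the $\Mgg$-action on $\Gr^Y\Cgg$ factors through $\Mgg/\Igg=\Sp(H)$.

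For the compatibility with the $\Sp(H)$-action on $\Gr^\Gamma\Igg$, observe that the injection $\mapcyl:\Igg\hookrightarrow\Cgg$ is $\Mgg$-equivariant for the two conjugation actions, since functoriality of the mapping cylinder gives $\mapcyl(\phi\psi\phi^{-1})=\mapcyl(\phi)\cdot\mapcyl(\psi)\cdot\mapcyl(\phi)^{-1}$ for $\phi\in\Mgg$ and $\psi\in\Igg$; passing to associated graded and then to the quotient $\Sp(H)$ gives the claim. Finally, the $\Sp(H)$-equivariance of the Lie bracket on $\Gr^Y\Cgg$ is automatic: the bracket is induced by the group commutator in $\Cgg/Y_l$, and $\Mgg$ acts by group automorphisms of $\Cgg/Y_l$, hence preserves commutators. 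The main (mild) obstacle lies in the first step, namely making precise that twisting by an arbitrary boundary homeomorphism preserves the homology cylinder property and respects clasper calculus; everything afterwards is a formal consequence of (\ref{eq:N-series}) and the naturality of the mapping cylinder construction.
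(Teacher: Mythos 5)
Your proof is correct and follows essentially the same route as the paper: you define the $\Mgg$-action on $\Cgg$ (your boundary-reparametrization description is the same as the paper's conjugation $f\circ M\circ f^{-1}$ inside $\Cob(g,g)$), use naturality of clasper surgery under homeomorphisms of the cylinder to preserve the $Y$-filtration, and invoke the inclusion \eqref{eq:N-series} to show $\Igg$ acts trivially on $Y_i\Cgg/Y_{i+1}$, so the action factors through $\Sp(H)$. The compatibility with $\Gr^\Gamma\Igg$ and the equivariance of the bracket are handled formally, just as in the paper.
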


\begin{proof} Let $\Cob(g,g)$ be the monoid of cobordisms from $\sgg$ to $\sgg$. 
The mapping cylinder construction defines
an inclusion $\Mgg \hookrightarrow \Cob(g,g)$. 
Thus, $\Mgg$ acts on $\Cob(g,g)$ by conjugation: 
$$
\Mgg \times \Cob(g,g) \ni \ (f,M) \longmapsto f \circ M \circ f^{-1} \ \in \Cob(g,g).
$$
The Mayer--Vietoris theorem shows that this action preserves the submonoid $\Cgg$ of $\Cob(g,g)$.
The $Y_i$-equivalence being generated by surgeries along graph claspers with $i$ nodes,
this action also preserves the submonoid $Y_i \Cgg$ of $\Cgg$. 
This follows from the general fact
\begin{equation}
\label{eq:pairs}
\begin{array}{l}
\forall \hbox{ graph clasper } G \subset \sgg \times [-1,1], \ \forall f\in \Mgg, \\[0.1cm]
\left(\ f\circ\left(  \sgg \times [-1,1]\right) \circ f^{-1}\ ,\ G\ \right) \cong 
\left(\ \sgg \times [-1,1]\ ,\ (f\times \Id_{[-1,1]})(G)\ \right)
\end{array}
\end{equation}
where  the symbol ``$\cong$'' means a homeomorphism of pairs (3-manifold with boundary, embedded graph).
Therefore, the group $\Mgg$ acts on $\Gr^Y \Cgg$.
But, inclusion (\ref{eq:N-series}) also shows that 
$$
\forall f \in \Igg, \ \forall M \in Y_i \Cgg,\ f \circ M \circ f^{-1} \sim_{Y_{i+1}} M. 
$$
So, the action of $\Mgg$ on $\Gr^Y \Cgg$ factorizes to $\Mgg/\Igg \simeq \Sp(H)$. 
The second statement of the lemma is easily checked.
\end{proof}

Next, we have the following result generalizing the $\Sp(H)$-equivariance of Johnson's homomorphism:

\begin{lemma}
\label{lem:equivariance_LMO}
Let $g\geq 0$.  The Lie algebra isomorphisms $\psi$ and  $\LMO$,
defined in \S~\ref{subsec:surgery} and \S~\ref{subsec:LMO} respectively,
are both $\Sp(H)$-equivariant.
\end{lemma}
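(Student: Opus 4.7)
The plan is to prove the $\Sp(H)$-equivariance of $\psi$ directly from its geometric construction and then deduce the equivariance of $\LMO$ for free, since by Theorem~\ref{th:surgery-LMO} the maps $\psi$ and $\LMO$ are mutual inverses. By Lemma~\ref{lem:action}, the $\Sp(H)$-action on $\Gr^Y\Cgg$ is induced by the conjugation action of $\Mgg$, so it is enough to show that for every $f\in\Mgg$ and every connected Jacobi diagram $D\in\AgcHQ$,
\[
\psi\bigl(\bar f\cdot D\bigr) \;=\; [f]\cdot \psi(D)\quad\text{in }\Gr^Y\Cgg\oQ,
\]
where $\bar f\in\Sp(H)$ denotes the image of $f$ and $\bar f\cdot D$ denotes the diagram obtained by replacing each external label $c(v)\in\HQ$ by $\bar f\bigl(c(v)\bigr)$.

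The key geometric input is this: starting from a representative graph clasper $C(D)\subset\sgg\times[-1,1]$ as in \S~\ref{subsec:surgery}, the homeomorphism $f\times\Id_{[-1,1]}$ preserves the horizontal slicing of the cylinder and acts as the identity on the $[-1,1]$-factor, so it preserves the vertical ordering of the annuli of $C(D)$. Consequently, $(f\times\Id)(C(D))$ is again a graph clasper of the required type, whose underlying ordered Jacobi diagram has the same combinatorial structure as $D$ but whose external annuli now represent, by naturality of homology, the classes $\bar f\bigl(c(v)\bigr)\in\HQ$. Thus $(f\times\Id)(C(D))=C(\bar f\cdot D)$ up to the ambiguities tolerated by $\psi$, and surgery along it represents $\psi(\bar f\cdot D)$ in $\Gr^Y\Cgg\oQ$. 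On the other hand, the pair-homeomorphism~\eqref{eq:pairs} established in the proof of Lemma~\ref{lem:action} identifies surgery along $(f\times\Id)(C(D))$ inside the cylinder with the conjugate $f\circ\psi(D)\circ f^{-1}$, which by definition represents $[f]\cdot\psi(D)$. Combining these two identifications gives the desired equality.

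The main obstacle is essentially book-keeping rather than substance: one must check carefully that the total ordering of external vertices is preserved by $f\times\Id$ (immediate, since the $[-1,1]$-coordinate is untouched) and that the homological data attached to each annulus transforms by $\bar f$ (immediate, from naturality of $H_1$). Since the equality above is $\Mgg$-equivariant and the action factors through $\Mgg/\Igg\simeq\Sp(H)$ by Lemma~\ref{lem:action}, it upgrades to $\Sp(H)$-equivariance of $\psi$; and then $\LMO=\psi^{-1}$ is automatically $\Sp(H)$-equivariant as well.
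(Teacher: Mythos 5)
Your proposal is correct and follows essentially the same route as the paper: one reduces to the equivariance of $\psi$, chooses $f\in\Mgg$ inducing the given symplectic map, observes that $(f\times\Id_{[-1,1]})(C(D))$ realizes the relabeled diagram $F\cdot D$ (with the vertical ordering untouched), and invokes the pair-homeomorphism~\eqref{eq:pairs} to identify the surgered cylinder with the conjugate $f\circ\psi(D)\circ f^{-1}$, the equivariance of $\LMO=\psi^{-1}$ then being automatic. No gaps to report.
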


\noindent
So, we can transport the action of $\SpHQ$ on $\A^{<,c}\sHQ$
to an action on $\Gr^Y \Cgg\oQ$,
which extends the natural action of $\Sp(H)$ given by Lemma \ref{lem:action}.

\begin{proof}[Proof of Lemma \ref{lem:equivariance_LMO}]
It is enough to show that $\psi$ is $\Sp(H)$-equivariant. 
Let $D \in \AgcHQ$ be a connected Jacobi diagram and let $F \in \Sp(H)$. 
Choose $f\in \Mgg$ which induces $F$ in homology. Then, we have
$$
\psi(F\cdot D) = \left\{ \left(\sgg \times [-1,1]\right)_{C(F\cdot D)} \right\}
= \left\{ \left( \sgg \times [-1,1] \right)_{\left(f\times \Id_{[-1,1]}\right)(C(D))}\right\}
$$
$$
\stackrel{(\ref{eq:pairs})}{=} 
\left\{ f \circ  \left( \sgg \times [-1,1] \right)_{C(D)} \circ f^{-1} \right\}
= F \cdot \left\{ \psi(D) \right\}.
$$
\end{proof}

\begin{remark}
\label{rem:basis}
The construction of the LMO functor in \cite{CHM} and, a fortiori, 
the definition of the LMO homomorphism (\ref{eq:LMO})
$$
\hbox{LMO} = s \circ \varphi \circ \ZtildeY : \Cgg \longrightarrow \AgHQ
$$ 
depends on a choice of meridian and parallel curves 
$(\alpha_1,\dots, \alpha_g,$ $\beta_1,\dots,\beta_g)=:(\alpha,\beta)$ shown in Figure \ref{fig:surface}.
Another choice $(\alpha_1',\dots, \alpha'_g,$ $\beta_1',\dots,\beta'_g)=:(\alpha',\beta')$ 
would lead to ``another'' invariant
$$
\hbox{LMO}' :  \Cgg \longrightarrow \AgHQ.
$$
\begin{figure}[h]
\centerline{\relabelbox \small
\epsfxsize 5truein \epsfbox{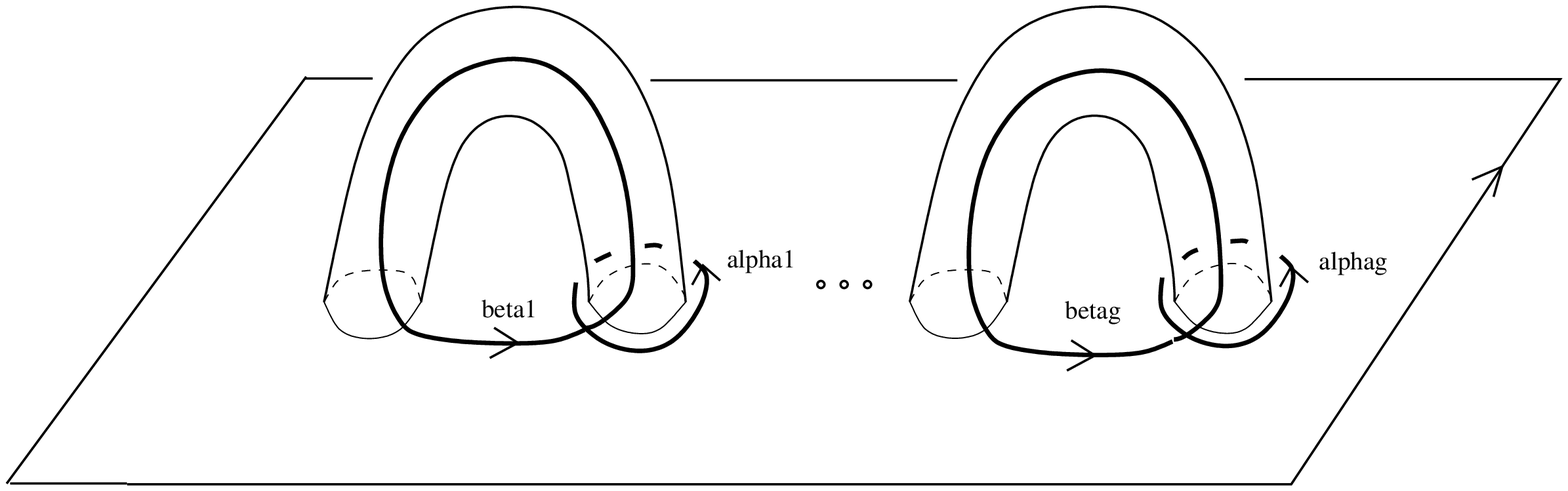}
\adjustrelabel <0cm,-0cm> {alpha1}{$\alpha_1$}
\adjustrelabel <0cm,-0cm> {alphag}{$\alpha_g$}
\adjustrelabel <0cm,-0cm> {beta1}{$\beta_1$}
\adjustrelabel <0cm,-0cm> {betag}{$\beta_g$}
\endrelabelbox}
\caption{The surface $\sgg$ and its system of meridians and parallels $(\alpha,\beta)$.}
\label{fig:surface}
\end{figure}
Let $f: \sgg \to \sgg$ be a homeomorphism sending the curves $\alpha,\beta$ 
to $\alpha',\beta'$ respectively.
Then, the connection between the latter invariant and the former one is as follows:
$$
\forall M \in \Cgg, \ \hbox{LMO}'(M) = f_* \cdot \hbox{LMO}\left( f^{-1} \circ M \circ f\right),
$$
where $f_* \in \Sp(H)$ denotes the action of $f$ on $H$. 
Therefore, Lemma \ref{lem:equivariance_LMO} says that the LMO homomorphism (\ref{eq:LMO})
does not depend on the choice of $(\alpha,\beta)$ \emph{at the graded level.}
\end{remark}

\subsection{The mapping cylinder construction at the level of graded Lie algebras}

We can now prove Theorem \ref{th:diagram}. 
Let $Y: \Lie(\LHQ) \to \A^{<,c}\sHQ$ 
be the Lie algebra homomorphism defined by identifying $\LHQ$ to $\A^{<,c}_1\sHQ$ as follows:
$$
a\wedge b \wedge c \mapsto \Ygraphtop{a}{b}{c},
$$
the total ordering of the external vertices being irrelevant in this case. 
Since the Lie bracket of $\AgcHQ$ is equivariant under the action of $\SpHQ$, 
the map $Y$ is equivariant as well.
So, we have the following diagram in the category of graded Lie
algebras with $\SpHQ$-actions: 
\begin{equation}
\label{eq:pre-diagram}
\xymatrix{
{\Gr^\Gamma \Igg \oQ} \ar[rr]^{\Gr \mapcyl \oQ} && {\Gr^Y \Cgg \oQ} \ar[d]^-{\rm{LMO}}_-\simeq\\
{\Lie\left(\LHQ\right) } 
\ar[u]^-J \ar[rr]^-{Y} && {\AgcHQ}.
}
\end{equation}
This commutes in degree $1$ since the $Y$-part of the LMO invariant $\Gr \ZtildeY$ 
defined in \cite{CHM} corresponds to the first Johnson homomorphism.
Since the Lie algebra $\Lie\left( \LHQ \right)$ is generated by its degree $1$ elements,
that diagram commutes in any degree. This completes the proof of Theorem \ref{th:diagram}.

\vspace{0.5cm}

\section{The degree two case}

\label{sec:quadratic}

In this section, we recall a few facts about classical representation theory of $\Sp_{2g} \C$.
We also recall the quadratic relations of the Torelli Lie algebra, as given explicitly in \cite{HS}.
Then, we compute the Lie bracket of $\AcHQ$ in degree $1+1$, which
allows us to prove Theorem \ref{th:degree_2}.

\subsection{Representation theory of $\Sp_{2g} \C$}

For basics of the representation theory of the Lie group  $\Sp_{2g} \C$, 
the reader is referred to \cite[\S\S~16-17]{FH}, the notations of which we will follow.
Thus, we denote
$$
\Sp_{2g} \mathbb{A} := \left\{ M \in \hbox{GL}_{2g}\mathbb{A}  : {}^t M \Omega M = \Omega \right\}, \
\hbox{ where } \Omega := \left(\begin{array}{cc} 0 & I_g \\ -I_g & 0 \end{array}\right)
\hbox{ and } \mathbb{A} := \Z, \Q, \C. 
$$
The representation theory of the complex Lie group $\Sp_{2g} \C$ is the same as that of its Lie algebra 
$$
\sp_{2g} \C  = \left\{  X \in \mathfrak{gl}_{2g} \C : {}^t X \Omega + \Omega X = 0\right\}.
$$
The diagonal matrices in $\sp_{2g} \C$ form a Cartan subalgebra $\mathfrak{h}$. Set
$H_i := E_{i,i} -E_{g+i,g+i}$, where $E_{i,j}$ denotes 
the elementary matrix with only one $1$ in position $(i,j)$.
Then, $(H_1,\dots,H_g)$ is a basis of $\mathfrak{h}$
whose dual basis of $\mathfrak{h}^*$ is denoted by $(L_1,\dots,L_g)$.

With respect to the above Cartan subalgebra $\mathfrak{h}$, 
the set of roots of $\sp_{2g} \C$ is  $R = \{ \pm L_i \pm L_j \}$, 
and here are the corresponding eigenvectors:
$$
\begin{array}{c|c}
\hbox{eigenvalue} &  \hbox{eigenvector} \\
\hline
L_i-L_j \ (i\neq j) & X_{i,j} := E_{i,j} - E_{g+j,g+i}\\
\hline
L_i + L_j \ (i\neq j) & Y_{i,j} := E_{i,g+j} + E_{j,g+i}\\
\hline
-L_i -L_j \ (i\neq j) & Z_{i,j} := E_{g+i,j} + E_{g+j,i}\\
\hline
2 L_i & U_i := E_{i,g+i}\\
\hline
-2 L_i&  V_i := E_{g+i,i}
\end{array}
$$
One can declare the positive roots to be $R^+ = \{L_i+L_j | i \leq j \} \cup \{L_i -L_j | i< j\}$,
so that the primitive positive roots are the $L_i - L_{i+1}$'s, for $i=1,\dots,g-1$, and $2L_g$.

The weight lattice of $\sp_{2g} \C$ is spanned by the $L_i$'s, and the fundamental weights are
$$
\omega_1 := L_1 ,\ \omega_2 := L_1 + L_2,\ \dots,\ \omega_g := L_1 +L_2 +\cdots + L_g.
$$
Then, to each $g$-tuple of non-negative integers $(a_1,\dots,a_g)$
corresponds a unique irreducible representation $\Gamma_{a_1 \omega_1 + \cdots + a_g \omega_g}$
of $\sp_{2g} \C$ with highest weight $a_1 \omega_1 + \cdots + a_g \omega_g$.
The data $(a_1,\dots,a_g)$ can be thought of as the Young diagram with $a_i$ columns of height $i$ or,
equivalently, as the partition $\lambda = (a_g + \cdots +a_2 + a_1, a_g + \cdots +a_2, \dots, a_g)$ 
of length $|\lambda| = \sum_{i=1}^g i \cdot a_i$. Thus, irreducible $\sp_{2g} \C$-modules or, equivalently, 
irreducible $\Sp_{2g} \C$-modules are indexed by partitions $\lambda$ with no more than $g$ parts.

Actually, the $\sp_{2g} \C$-module $\Gamma_{a_1 \omega_1 + \cdots + a_g \omega_g}$ can be 
realized as the ``symplectic'' Schur module  $\mathbb{S}_{\langle\lambda\rangle} \mathbb{C}^{2g}$
associated to $\lambda$,
i.e$.$ the intersection in $\left(\mathbb{C}^{2g}\right)^{\otimes |\lambda|}$
of the ordinary Schur module $\mathbb{S}_\lambda \mathbb{C}^{2g}$
with the kernels of all possible contractions 
$\left(\mathbb{C}^{2g}\right)^{\otimes |\lambda|} \to \left(\mathbb{C}^{2g}\right)^{\otimes (|\lambda|-2)}$
defined by the symplectic form $\Omega$.
It follows that each representation $\Gamma_{a_1 \omega_1 + \cdots + a_g \omega_g}$ exists with
rational coefficients, and defines an irreducible $\Sp_{2g} \Q$-module as well.

\begin{example}
\label{ex:fundamental}
For all $k=1,\dots, g$, the fundamental representation 
$\Gamma_{\omega_k}$ is the symplectic Schur module given 
by the Young diagram with only one column of height $k$. So, $\Gamma_{\omega_k}$ is the kernel
of the contraction map
$$
\Lambda^k \mathbb{C}^{2g} \longrightarrow \Lambda^{k-2} \mathbb{C}^{2g}, \
v_1 \wedge \cdots \wedge v_k \longmapsto \sum_{i<j} (-1)^{i+j} \Omega(v_i,v_j) \cdot
v_1 \wedge \cdots \widehat{v_i} \cdots \widehat{v_j} \cdots \wedge v_k.
$$
\end{example}

In the sequel, we will meet some $\Sp_{2g} \C$-modules that are restrictions of $\GL_{2g} \C$-modules
via the canonical inclusion $\Sp_{2g} \C \subset \GL_{2g} \C$. 
In particular, the ordinary Schur module $\mathbb{S}_\lambda \mathbb{C}^{2g}$ can be regarded as an $\Sp_{2g} \C$-module,
and a ``restriction formula'' by Littlewood gives its irreducible decomposition 
when the partition $\lambda$ has no more than $g$ parts:
\begin{equation}
\label{eq:Littlewood}
\mathbb{S}_\lambda \mathbb{C}^{2g} \simeq 
\bigoplus_{\mu} N_{\mu \lambda} \cdot \mathbb{S}_{\langle \mu \rangle} \mathbb{C}^{2g}.
\end{equation}
Here, the sum is over all partitions $\mu$ with no more than $g$ parts and
$$
N_{\mu \lambda} = \sum_{\eta} N_{\eta \mu \lambda}
$$
is the sum of the Littlewood--Richardson coefficients $N_{\eta \mu \lambda}$ over all
partitions $\eta$ with each part occurring an even number of times. 
See \cite[(4.4)]{Littlewood} or \cite[(25.39)]{FH} for details.

For example, the irreducible decomposition of $\Lambda^2 \Lambda^3 \C^{2g}$
as an $\Sp_{2g} \C$-module can be computed using this method.
We restrict to the case $g\geq 3$ since this will be enough for our purposes.

\begin{lemma}
\label{lem:l2l3}
We have the following isomorphism of $\Sp_{2g} \C$-modules:
\begin{equation}
\label{eq:decomposition}
\Lambda^2 \Lambda^3 \C^{2g} \simeq
\left\{\begin{array}{ll}
2 \Gamma_0 + 3 \Gamma_{\omega_2} + \Gamma_{2 \omega_2} + \Gamma_{\omega_1+ \omega_3} + 2 \Gamma_{\omega_4} + \Gamma_{\omega_2 + \omega_4} + \Gamma_{\omega_6} & \hbox{if } g\geq 6,\\
2 \Gamma_0 + 3 \Gamma_{\omega_2} + \Gamma_{2 \omega_2} + \Gamma_{\omega_1+ \omega_3} + 2 \Gamma_{\omega_4} + \Gamma_{\omega_2 + \omega_4} & \hbox{if } g=5,\\
2 \Gamma_0 + 3 \Gamma_{\omega_2} + \Gamma_{2 \omega_2} + \Gamma_{\omega_1+ \omega_3} +\phantom{2}   \Gamma_{\omega_4} + \Gamma_{\omega_2 + \omega_4} & \hbox{if } g=4,\\
2 \Gamma_0 + 2 \Gamma_{\omega_2} + \Gamma_{2 \omega_2} + \Gamma_{\omega_1+ \omega_3} & \hbox{if } g=3.\\
\end{array} \right.
\end{equation}
\end{lemma}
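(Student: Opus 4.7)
The plan is to compute the decomposition in two stages: first decompose $\Lambda^2\Lambda^3\C^{2g}$ as a $\GL_{2g}\C$-module via a classical plethysm, and then restrict to $\Sp_{2g}\C$ using Littlewood's formula (\ref{eq:Littlewood}).

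First, I would use the plethysm of Schur functors
\begin{gather*}
\Lambda^2(\Lambda^3 \C^{2g}) \;\simeq\; \bigoplus_\lambda c_\lambda\cdot \mathbb{S}_\lambda \C^{2g}
\end{gather*}
as $\GL_{2g}\C$-modules, where the integers $c_\lambda$ are the coefficients of $s_\lambda$ in $e_2[e_3]$. This plethysm can be computed from the standard tables (or recomputed via characters and Newton's identities): the partitions $\lambda$ that appear are supported on $|\lambda|=6$ and include the Young diagrams that express the antisymmetry of the two triple-wedges being permuted. A dimension check against $\binom{\binom{2g}{3}}{2}$ would confirm the result.

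Next, to each $\mathbb{S}_\lambda\C^{2g}$ in the $\GL_{2g}\C$-decomposition I would apply Littlewood's restriction formula~\eqref{eq:Littlewood}, which expresses $\mathbb{S}_\lambda\C^{2g}$ as a sum of $\mathbb{S}_{\langle\mu\rangle}\C^{2g}=\Gamma_\mu$ with multiplicities $N_{\mu\lambda}=\sum_\eta N_{\eta\mu\lambda}$, the sum running over partitions $\eta$ whose parts all occur evenly. Since $|\lambda|=6$, the admissible $\eta$'s are severely restricted (e.g.\ $\eta=\emptyset,(1,1),(2,2),(1,1,1,1),(3,3),(2,2,1,1),\dots$), and the Littlewood--Richardson coefficients $N_{\eta\mu\lambda}$ can be read off by standard tableau counting. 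Summing the contributions across all $\lambda$ appearing in the plethysm, and collecting terms, should yield exactly the list of irreducibles $\Gamma_0,\Gamma_{\omega_2},\Gamma_{2\omega_2},\Gamma_{\omega_1+\omega_3},\Gamma_{\omega_4},\Gamma_{\omega_2+\omega_4},\Gamma_{\omega_6}$ with the multiplicities stated in \eqref{eq:decomposition} for the stable range $g\ge 6$.

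The main obstacle will be the small-genus cases $g=3,4,5$. For these, Littlewood's formula \eqref{eq:Littlewood} is stated only for partitions $\mu$ with at most $g$ parts, so any $\mu$ produced with more than $g$ parts has to be removed or folded back via the symplectic modification rules (equivalently, one restricts the sum in the formula to $\mu$'s fitting in $g$ rows and double-checks that the remaining $\mathbb{S}_\lambda\C^{2g}$ on the left-hand side still decomposes correctly). The representation $\Gamma_{\omega_6}$ requires $g\ge 6$ and disappears for $g=5$; one of the $\Gamma_{\omega_4}$ summands arises from a $\mu$ having exactly $4$ parts that is no longer available when $g<5$, explaining the drop from multiplicity $2$ to $1$ at $g=4$; and for $g=3$ both $\Gamma_{\omega_4}$ and $\Gamma_{\omega_2+\omega_4}$ vanish since $\omega_4$ no longer exists. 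I expect the bookkeeping of these boundary effects, together with a careful check that no hidden irreducibles appear through the symplectic modification rules, to be the main source of care in the argument; a cross-check of total dimensions for $g=3,4,5$ against $\binom{\binom{2g}{3}}{2}$ will verify the outcome.
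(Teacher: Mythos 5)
Your two-stage plan (decompose over $\GL_{2g}\C$ first, then restrict to $\Sp_{2g}\C$ via Littlewood) is the same route the paper takes: the plethysm step is simply $\Lambda^2\Lambda^3\C^{2g}\simeq \mathbb{S}_{(1,1,1,1,1,1)}\C^{2g}\oplus\mathbb{S}_{(2,2,1,1)}\C^{2g}$ (Pieri's formula), and the stable case $g\ge 6$ then follows from \eqref{eq:Littlewood} exactly as you describe. The genuine gap is in your handling of the unstable range $g=3,4,5$. First, the validity hypothesis in \eqref{eq:Littlewood} is on $\lambda$ (at most $g$ parts), not on $\mu$, so the problematic inputs are $\lambda=(1^6)$ for $g<6$ and $\lambda=(2,2,1,1)$ for $g=3$. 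Second, and more importantly, in that range you cannot just ``restrict the sum to $\mu$'s fitting in $g$ rows'': for $\lambda=(1^6)$ and $g=4$ the stable answer is $\Gamma_0+\Gamma_{\omega_2}+\Gamma_{\omega_4}+\Gamma_{\omega_6}$, and truncating it keeps $\Gamma_{\omega_4}$ (the column $(1,1,1,1)$ has exactly $g=4$ rows), whereas the correct decomposition is $\Lambda^6\C^{8}\simeq\Lambda^{2}\C^{8}\simeq\Gamma_0+\Gamma_{\omega_2}$, with no $\Gamma_{\omega_4}$ at all: the modification rules produce a cancellation, not a mere deletion. In particular your proposed explanation of the drop from $2\Gamma_{\omega_4}$ to $\Gamma_{\omega_4}$ at $g=4$ (``a $\mu$ with exactly $4$ parts no longer available when $g<5$'') is not correct; the lost copy comes from this cancellation inside the $\Lambda^6$ piece. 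Your dimension cross-check would detect that something is off, but it would not by itself tell you how to repair the list, so the modification-rule bookkeeping has to be carried out honestly (or replaced by another argument).

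For comparison, the paper avoids the delicate part altogether: the $(1^6)$ summand is $\Lambda^6\C^{2g}$, which as an $\Sp_{2g}\C$-module is isomorphic to $\Lambda^{2g-6}\C^{2g}$, and the decomposition of exterior powers into the fundamental representations $\Gamma_{\omega_k}$ (Example \ref{ex:fundamental}) gives the unstable answers for $g=3,4,5$ directly; Littlewood's formula is applied only to $\mathbb{S}_{(2,2,1,1)}\C^{2g}$, where it is legitimate for all $g\ge4$, and the single remaining bad case $g=3$ is handled by a trick from Littlewood's paper for partitions with too many parts. If you adopt this splitting of the work, your plan goes through; as written, the unstable-range step would produce wrong multiplicities at $g=4$ and $g=5$.
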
 

\begin{proof}
The irreducible decomposition of $\Lambda^2 \Lambda^3 \C^{2g}$
as a $\GL_{2g} \C$-module can be deduced from Pieri's formula for all $g\geq 0$:
$$
\Lambda^2 \Lambda^3 \C^{2g} \simeq \mathbb{S}_{(1,1,1,1,1,1)} \C^{2g} 
\oplus \mathbb{S}_{(2,2,1,1)} \C^{2g},
$$
see \cite[Exercice 15.32]{FH}. 
We deduce from Example \ref{ex:fundamental} that
$$
\mathbb{S}_{(1,1,1,1,1,1)} \C^{2g} = \Lambda^6 \C^{2g} \simeq  \left\{
\begin{array}{lcll}
&& \Gamma_0 + \Gamma_{\omega_2} + \Gamma_{\omega_4} + \Gamma_{\omega_6} & \hbox{if } g\geq 6,\\
\Lambda^4 \C^{2g} &\simeq & \Gamma_0 + \Gamma_{\omega_2} + \Gamma_{\omega_4} & \hbox{if } g=5,\\
\Lambda^2 \C^{2g}  &\simeq  & \Gamma_0 + \Gamma_{\omega_2} & \hbox{if } g=4,\\
\C  &\simeq & \Gamma_0 & \hbox{if } g=3.\\
\end{array} \right.
$$
Moreover, Littlewood's restriction formula (\ref{eq:Littlewood}) shows\footnote{When $g=3$, 
the partition $(2,2,1,1)$ has too many parts to apply directly (\ref{eq:Littlewood}).
Nevertheless, there is a trick to bypass this restriction: See \cite[\S 6 (ii)]{Littlewood}.} that
$$
\mathbb{S}_{(2,2,1,1)} \C^{2g} \simeq 
\left\{ \begin{array}{ll}
\Gamma_0 + 2 \Gamma_{\omega_2} + \Gamma_{2 \omega_2}
+ \Gamma_{\omega_1 + \omega_3} + \Gamma_{\omega_4} + \Gamma_{\omega_2 + \omega_4} 
& \hbox{if } g\geq 4,\\
\Gamma_0 + 2 \Gamma_{\omega_2} + \Gamma_{2 \omega_2}+ \Gamma_{\omega_1 + \omega_3} & \hbox{if } g=3.
\end{array} \right.
$$
The conclusion follows.
\end{proof}

\subsection{Quadratic relations of the Torelli Lie algebra}

\label{subsec:quadratic_relations}

Let $\left(\alpha_1, \dots , \alpha_g, \beta_1 , \dots,
\beta_g\right)$ be a system of meridians and parallels on the surface $\sgg$, as shown in Figure \ref{fig:surface}. 
This defines a symplectic basis of $\HQ$, so that $\SpHQ$ is identified with $\Sp_{2g} \Q$.
 
By abuse of notation, let $\omega \in \Lambda^2 \HQ$ denote the bivector dual to the symplectic form $\omega$, namely
$$
\omega := \sum_{i=1}^g \alpha_i \wedge \beta_i.
$$
Define $r_1,r_2\in\Lie_2(\LHQ)$ by
$$
\begin{array}{l}
r_1 :=\begin{cases}
\left[ \alpha_1 \wedge \alpha_2 \wedge \beta_2 , \alpha_3 \wedge
  \alpha_4 \wedge \beta_4  \right] 
&\text{if $g\ge4$},\\
0&\text{if $g=3$},
\end{cases}\\
r_2 := \left[ \alpha_1 \wedge \alpha_2 \wedge \beta_2 , \alpha_g \wedge \omega \right]
\quad \quad \quad \quad \;\;\text{if $g\ge3$}.
\end{array}
$$

The following theorem is proved in \cite{HS} by completing Hain's
arguments \cite[\S 11]{Hain}:

\begin{proposition}[Hain, Habegger--Sorger] 
\label{prop:HHS}
If  $g\geq 6$, then the $\Sp\sHQ$-module of quadratic relations 
${\rm{R}}_2\left(\Igg\right)$ is spanned by $r_1$ and $r_2$.
\end{proposition}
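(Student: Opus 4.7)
The plan is to compare $\Sp(H_\Q)$-submodules of $\Lambda^2\Lambda^3 H_\Q = \Lie_2(\Lambda^3 H_\Q)$. Let $M$ denote the $\Sp(H_\Q)$-submodule of $\Lie_2(\Lambda^3 H_\Q)$ generated by $r_1$ and $r_2$. I would prove that $M = {\rm R}_2(\Igg)$ by establishing the two inclusions separately.

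The inclusion $M\subset {\rm R}_2(\Igg)$ is the easier half. Since ${\rm R}_2(\Igg)$ is an $\Sp(H_\Q)$-submodule of $\Lie_2(\LHQ)$ (by the $\Sp$-equivariance of $J$ discussed just before Lemma~\ref{lem:action}), it suffices to show $J(r_1)=J(r_2)=0$. For $r_1$, I would use the fact that the two trivectors $\alpha_1\wedge\alpha_2\wedge\beta_2$ and $\alpha_3\wedge\alpha_4\wedge\beta_4$ lift through $J$ to elements of $\Igg$ supported on disjoint subsurfaces of $\sgg$, so that the bracket vanishes in $\Gr_2^\Gamma\Igg\oQ$. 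For $r_2$, I would invoke Morita's computation of the second Johnson homomorphism, which shows that brackets against the symplectic class $\alpha_g\wedge\omega$ lift to commutators of bounding pair maps that become trivial at the level of $\tau_2$.

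For the reverse inclusion ${\rm R}_2(\Igg)\subset M$, the plan is to argue by $\Sp$-isotypic decomposition. Combining Lemma~\ref{lem:l2l3} (the decomposition of $\Lambda^2\Lambda^3\C^{2g}$ for $g\ge 6$) with Hain's computation of the degree~$2$ part of the Torelli Lie algebra with rational coefficients (from \cite{Hain}), one reads off the $\Sp(H_\Q)$-isotypic decomposition of ${\rm R}_2(\Igg)\otimes\C$ as the ``difference'' of the two decompositions. I would then exhibit, for each isotypic component $N$ appearing in ${\rm R}_2(\Igg)\otimes\C$, an explicit $\Sp$-equivariant projector $\pi_N\colon \Lambda^2\Lambda^3\C^{2g}\to N$ (built from iterated contraction against the bivector $\omega$ and the appropriate Schur symmetrizers), and compute $\pi_N(r_1)$ and $\pi_N(r_2)$. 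By Schur's lemma, showing that either projection is nonzero on a given $N$ would force $N\subset M$.

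The main obstacle is this last verification. By Lemma~\ref{lem:l2l3}, the ambient representation is quite large, with several irreducible summands of multiplicities $2$ or $3$, so one has to be careful distinguishing summands lying in ${\rm R}_2(\Igg)$ from those surviving in the image of $J_2$. The weights of $r_1$ and $r_2$ are $L_1+L_3$ and $L_1+L_g$ respectively, which already rules out their lying in the trivial summand and restricts the isotypic components they can meet; in particular, $r_2$ is expected to account for the copies of $\Gamma_{\omega_2}$ in ${\rm R}_2$, while $r_1$ should cover the higher-weight pieces (such as $\Gamma_{\omega_4}$ and $\Gamma_{\omega_2+\omega_4}$) corresponding to ``disjoint support'' commutators. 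The bookkeeping should be organized by writing $r_1$ and $r_2$ in a weight basis and applying the contraction-with-$\omega$ operator iteratively, following the pattern already used in the proof of Lemma~\ref{lem:l2l3} via Pieri's formula and Littlewood's restriction formula.
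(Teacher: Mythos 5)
Your overall two-inclusion strategy is reasonable, but as written both halves have gaps. For the easy inclusion, your argument that $J(r_2)=0$ because the relevant commutators ``become trivial at the level of $\tau_2$'' is not sufficient: $\tau_2$ is not injective on $\Gr_2^\Gamma\Igg\oQ$ (by Morita's theorem the degree-two part is detected by $\tau_2$ \emph{together with} the Casson invariant, i.e.\ there is an extra trivial summand), so vanishing of $\tau_2$ alone does not kill a class. One could repair this with a weight argument (the weight of $r_2$ is $L_1+L_g\neq0$, so its image cannot meet a trivial summand), but that presupposes Morita's classification of $\Gr_2^\Gamma\Igg\oQ$. The clean fix --- and what the paper does when it rederives the statement --- is to realize \emph{both} $r_1$ and $r_2$ as $\tau_1(f_i)\wedge\tau_1(g_i)$ for bounding pair maps $f_i,g_i$ with disjoint supports, so that $J(r_i)=\{[f_i,g_i]\}=0$ outright.

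For the hard inclusion, the Schur's lemma criterion you state --- that a nonzero projection onto an isotypic component $N$ of ${\rm R}_2(\Igg)$ forces $N\subset M$ --- fails exactly where you need it. Comparing Lemma~\ref{lem:l2l3} with $\Gr_2^\Gamma\Igg\oQ\simeq 2\Gamma_0+\Gamma_{\omega_2}+\Gamma_{2\omega_2}$ shows that ${\rm R}_2(\Igg)$ contains the pieces $2\Gamma_{\omega_2}$ and $2\Gamma_{\omega_4}$ with multiplicity two; a nonzero projection of $r_1$ or $r_2$ onto such a piece only guarantees that $M$ meets it in at least one copy, not that $M$ contains all of it. One must exhibit two linearly independent highest-weight vectors of weight $\omega_2$ (resp.\ $\omega_4$) inside the $\SpHQ$-span of $\{r_1,r_2\}$; this is precisely the content of the inclusions \eqref{eq:inclusions}, which the paper imports from \cite{HS}. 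Note finally that the paper does not prove Proposition~\ref{prop:HHS} by your route: it cites \cite{HS} for the statement itself, and then reproves (and extends to all $g\ge3$, see \eqref{e10}) the equality ${\rm R}_2(\Igg)=\langle r_1,r_2\rangle_{\SpHQ}$ diagrammatically, by computing $\Ker(b_2)$ and $\Img(b_2)$ in the Lie algebra of symplectic Jacobi diagrams (Lemma~\ref{lem:ker_bracket}, Proposition~\ref{prop:im_bracket}) and feeding the result through the LMO-based commutative square; that route avoids any a priori use of Hain's computation of $\Gr_2^\Gamma\Igg\oQ$, which your plan takes as input.
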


\noindent
Actually, Proposition \ref{prop:HHS} and its proof extend to all $g\ge3$: See \eqref{e10} below.
This proposition together with Hain's result (Theorem \ref{th:Hain}) 
provides a quadratic presentation of the Torelli Lie algebra in genus $g\geq 6$.

\subsection{The Lie bracket $b_2$}

In order to prove Theorem \ref{th:degree_2}, we need to compute the Lie bracket of $\A^{c}(H_\Q)$ in degree $1+1$:
\begin{gather*}
  b_2:=[-,-]_\star : \Lambda^2 \Ac{1}  \longrightarrow \Ac{2}.
\end{gather*}
The following formula for $b_2$ is deduced from (\ref{eq:commutator}).

\begin{lemma}
\label{lem:bracket}
For all $x_1,x_2,x_3 \in \HQ$ and  $y_1,y_2,y_3 \in \HQ$, we have
$$
\left[\Ygraphbotbottop{x_2}{x_3}{x_1},\Ygraphbotbottop{y_2}{y_3}{y_1}\right]_\star
= \sum_{\substack{i \in \Z/3\Z\\j\in \Z/3\Z}} \omega(x_i, y_j) 
{\begin{array}{c} \\[-0.2cm] \!\! {\relabelbox \small
\epsfxsize 0.4truein \epsfbox{H.eps}
\adjustrelabel <0cm,0.3cm> {a}{\scriptsize $x_{i+2}$}
\adjustrelabel <0cm,0.35cm> {b}{\scriptsize $y_{j+1}$}
\adjustrelabel <0cm,-0cm> {c}{\scriptsize $x_{i+1}$}
\adjustrelabel <0cm,-0cm> {d}{\scriptsize $y_{j+2}$}
\endrelabelbox} \end{array}}
- \frac{1}{4} \left| {\scriptsize \begin{array}{ccc} \omega(x_1, y_1) & \omega(x_1, y_2) & \omega(x_1 , y_3)\\
\omega(x_2 , y_1) & \omega(x_2, y_2) & \omega(x_2 , y_3)\\
\omega(x_3, y_1) & \omega(x_3 , y_2) & \omega(x_3 , y_3)
\end{array} }\right|\
{\hspace{-0.2cm} \cdot \figtotext{20}{20}{theta.eps} \hspace{-0.2cm}}.
$$
\end{lemma}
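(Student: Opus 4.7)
The plan is to apply the commutator formula \eqref{eq:commutator} directly to the two Y-graphs $D,E\in\Ac{1}$. Each has three external vertices, so the parity constraint $|V'|\equiv 1\pmod 2$ forces $|V'|\in\{1,3\}$, and these two cases can be handled in turn.

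In the case $|V'|=1$, there are $3\times 3=9$ bijections: pick one external vertex from $V$ and one from $W$ and identify them. The weight is $1/2^{0}=1$ and the coefficient is $\omega(x_i,y_j)$, where $(x_i,y_j)$ are the colors of the glued pair. The resulting diagram has the two trivalent nodes of $D$ and $E$ joined by a new internal edge together with four external legs, i.e.\ an H-graph. Using the cyclic orders inherited from $D$ and $E$ at the two trivalent vertices, the remaining legs sit at the positions prescribed by the indices $i\pm 1$ and $j\pm 1$ modulo $3$; reading the four labels according to the chosen drawing convention for the H-graph yields exactly the first sum in the statement.

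In the case $|V'|=3$, the six bijections $\beta\colon V\to W$ are parametrized by permutations $\sigma\in S_{3}$ via $\beta(v_i)=w_{\sigma(i)}$, each carrying the weight $1/2^{2}=1/4$ and the coefficient $\prod_{i}\omega(x_i,y_{\sigma(i)})$. Every such gluing produces a diagram with only the two original trivalent vertices linked by three edges, namely a theta graph. Fixing a labeling $1,2,3$ of the three internal edges by the cyclic order at the $D$-vertex, the cyclic order at the $E$-vertex becomes the permuted triple $(\sigma^{-1}(1),\sigma^{-1}(2),\sigma^{-1}(3))$, so the AS relation identifies the resulting theta with $\mathrm{sgn}(\sigma)\cdot\thetagraph$ up to a global sign. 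Summing over $\sigma$ collapses the six terms into $\det[\omega(x_i,y_j)]\cdot\thetagraph$, up to that sign and the prefactor $1/4$.

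The only genuine obstacle is bookkeeping of signs and positions: identifying the correct four labels of the H-graph in the chosen drawing convention, and checking that the overall sign attached to the theta term is $-1$ rather than $+1$. Both are fixed by a single explicit instance — for example, by tracking $\sigma=\mathrm{id}$ in the theta case and a single nontrivial $(i,j)$ in the H-graph case — and involve no new ideas beyond the AS relation and the layout convention for the macros $\Ygraphbotbottop{}{}{}$ and the H-graph.
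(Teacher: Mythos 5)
Your proposal is correct and follows the same route as the paper, which derives the lemma simply as a direct application of the commutator formula \eqref{eq:commutator} (the paper offers no further detail): the gluings with $|V'|=1$ give the nine H-graph terms with coefficient $\omega(x_i,y_j)$ and weight $1$, while the gluings with $|V'|=3$ give theta graphs with weight $1/4$, which the AS relation collapses, via the signs $\operatorname{sgn}(\sigma)$, into the determinant term. Your deferral of the final sign and of the H-graph label positions to one explicit instance is exactly the routine bookkeeping the paper leaves implicit.
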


The source of the map $b_2$ is decomposed into irreducible $\Sp\sHQ$-modules according to Lemma \ref{lem:l2l3}.
As for the target of the map $b_2$, \eqref{e9} gives the following decomposition:
\begin{gather*}
  \Ac{2} = \Ac{2,0}\oplus \Ac{2,1} \oplus \Ac{2,2},
\end{gather*}
where 
\begin{gather*}
  \begin{split}
    \Ac{2,0} &= \left\langle \Hgraph yzxw\,\Bigg|\,x,y,z,w\in\HQ \right\rangle_\Q,\\
    \Ac{2,1} &= \left\langle\left. \phigraphtop xy\,\right|\,x,y\in\HQ \right\rangle_\Q,\\
    \Ac{2,2} &= \left\langle \thetagraph \right\rangle_\Q.
  \end{split}
\end{gather*}
For any $g\ge0$, we have
\begin{gather*}
    \Ac{2,0} \simeq \mathbb{S}_{(2,2)}\HQ,\quad 
    \Ac{2,1} \simeq S^2\HQ,\quad 
    \Ac{2,2} \simeq \Q.
\end{gather*}
So, for $g\geq 3$,
we have the following irreducible decompositions into $\SpHQ$-modules:
\begin{gather*}
  \Ac{2,0} \simeq \Gamma_{0}+\Gamma_{\omega_2}+\Gamma_{2\omega_2}, \quad
  \Ac{2,1} \simeq \Gamma_{2\omega_1}, \quad
  \Ac{2,2} \simeq \Gamma_0,
\end{gather*}
where the decomposition of $\Ac{2,0}$ is deduced from Littlewood's formula \eqref{eq:Littlewood}.

\subsection{The image of $b_2$}

$\Img(b_2)$ has the following description.

\begin{proposition}
  \label{prop:im_bracket}
  If $g\ge3$, then we have
  \begin{gather*}
    \Img(b_2) =       \A_{2,\ev}^{c} = \Ac{2,0} \oplus \Ac{2,2} \quad \quad
      \left(\simeq 2\Gamma_0+\Gamma_{\omega_2}+\Gamma_{2\omega_2}\right).
  \end{gather*}
\end{proposition}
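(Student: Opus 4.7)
The forward inclusion $\Img(b_2) \subset \A^c_{2,\ev}$ is immediate from Lemma \ref{lem:bracket}: its right-hand side is a linear combination of H-graphs (loop degree $0$) and a scalar multiple of the theta graph (loop degree $2$), and no $\phi$-graph (loop degree $1$) ever appears. Alternatively, Y-graphs being trees gives $\A^c_1 \subset \A^c_{*,\ev}$, and \S\ref{subsec:loop_filtration} shows that $\A^c_{*,\ev}$ is a Lie subalgebra of $\A^c\sHQ$.

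For the reverse inclusion, I would exploit $\Sp(\HQ)$-equivariance: the symmetrization $\chi$ and the product $\osqcup$ on $\AgHQ$ are $\Sp$-equivariant, hence so are the star product on $\A\sHQ$ and its commutator $b_2$. By complete reducibility of finite-dimensional $\Sp(\HQ)$-modules, $\Img(b_2)$ is a direct summand of
\[
\A^c_{2,\ev} \;=\; \A^c_{2,0}\oplus\A^c_{2,2} \;\simeq\; 2\Gamma_0 + \Gamma_{\omega_2} + \Gamma_{2\omega_2}.
\]
Surjectivity onto $\A^c_{2,\ev}$ thus reduces to producing four elements of $\Img(b_2)$ whose projections onto the four irreducible summands (two of which are copies of $\Gamma_0$) are linearly independent.

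I would do four explicit computations using Lemma \ref{lem:bracket} in the symplectic basis $(\alpha_1,\dots,\alpha_g,\beta_1,\dots,\beta_g)$. For the $\Gamma_0 \subset \A^c_{2,2}$ summand, evaluate $[\alpha_1\wedge\alpha_2\wedge\alpha_3,\;\beta_1\wedge\beta_2\wedge\beta_3]_\star$: the matrix $\bigl(\omega(\alpha_i,\beta_j)\bigr)$ equals $I_3$, so the theta graph appears with nonzero coefficient $-1/4$, putting $\A^c_{2,2}$ in the image. For the three irreducibles $\Gamma_0,\Gamma_{\omega_2},\Gamma_{2\omega_2}$ sitting inside $\A^c_{2,0} \simeq \mathbb{S}_{(2,2)}\HQ$, I would choose bracket arguments whose $3\times 3$ $\omega$-matrix has vanishing determinant (killing any theta contribution) while producing H-graphs whose labels realize, respectively, a fully contracted vector (representing the second $\Gamma_0$), a highest-weight vector of $\Gamma_{\omega_2}$, and a highest-weight vector of $\Gamma_{2\omega_2}$.

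The main obstacle is the representation-theoretic bookkeeping in this last step: one must pin down explicit highest-weight vectors for each $\Sp$-irreducible summand of $\mathbb{S}_{(2,2)}\HQ$ and find triples $(x_1,x_2,x_3)$ and $(y_1,y_2,y_3)$ so that the single nonzero term in the H-graph sum (obtained by making all but one of the $\omega(x_i,y_j)$ vanish) projects nontrivially onto the prescribed vector. Once these four vectors and their realizations are in hand, plugging into Lemma \ref{lem:bracket} and reading off coefficients is a finite linear-algebra verification, and the proof is complete.
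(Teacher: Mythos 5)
Your overall strategy coincides with the paper's: the forward inclusion via the $\Z/2$-grading by loop degree (this is exactly \eqref{eq:a}), and the reverse inclusion by decomposing the target into $\SpHQ$-irreducibles and exhibiting explicit brackets hitting each summand. For the multiplicity-one summands this is fine in principle: the paper realizes the highest-weight vector of $\Gamma_{2\omega_2}$ as a single bracket, and that of $\Gamma_{\omega_2}$ as a sum of brackets over $i=2,\dots,g$. Note, however, that the highest-weight vector of the $\Gamma_{\omega_2}$ summand of $\Ac{2,0}$ is a sum of $g$ H-graphs, not a single one, so a bracket with only one nonvanishing $\omega(x_i,y_j)$ cannot ``realize'' it; you would instead have to verify that a single H-graph has nonzero projection onto that isotypic component, which is more than reading off coefficients from Lemma \ref{lem:bracket}.

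The genuine gap is in your treatment of the two copies of $\Gamma_0$. Since the trivial representation occurs with multiplicity \emph{two} in $\A^c_{2,\ev}$, the fact that $[\alpha_1\wedge\alpha_2\wedge\alpha_3,\,\beta_1\wedge\beta_2\wedge\beta_3]_\star$ carries the theta graph with coefficient $-1/4$ does \emph{not} put $\Ac{2,2}$ in the image: that element also has a nonzero component in $\Ac{2,0}$, and its projection to the two-dimensional invariant subspace spans some line that need not be the line of the theta graph. What must be shown is that $\Img(b_2)$ contains two linearly independent $\SpHQ$-invariant vectors. Similarly, your ``fully contracted vector'' cannot arise from a bracket with a single nonvanishing $\omega(x_i,y_j)$ --- the invariant vector of $\Ac{2,0}$ is a sum of $g^2$ H-graphs --- and computing the invariant projection of a single H-graph requires an invariant pairing or an averaging device you have not set up. The paper circumvents all of this by restricting $b_2$ to the $\SpHQ$-invariants of the source: by Lemma \ref{lem:l2l3} these form a two-dimensional space, spanned by two explicit tensors $T_1,T_2$ (obtained by Morita's graphical method), whose brackets are automatically invariant and hence can be written directly as combinations of the theta graph and the invariant H-graph sum; the resulting $2\times 2$ determinant equals $g(g-1)^2(g-2)$, which is nonzero precisely when $g\ge 3$, so both invariant vectors lie in the image. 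You would need to supply this (or an equivalent) computation for your argument to close.
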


\noindent
This corresponds to Morita's result \cite{Morita_Casson_I,Morita_Casson_II} that
$\left(\Gamma_2 \Igg / \Gamma_3 \Igg\right) \oQ$ is classified
by the second Johnson homomorphism (onto $\Ac{2,0} $) and by the
Casson invariant (onto $\Ac{2,2} $).

\begin{proof}[Proof of Proposition \ref{prop:im_bracket}]
By \eqref{eq:a}, we have $\Img(b_2)\subset\A_{2,\ev}^{c}$.
To prove the converse inclusion, we use decompositions into irreducible $\SpHQ$-modules.
As mentioned in \cite[\S 3]{Sakasai}, some highest weight vectors of $\A_{2,0}^{c}$ are given by
$$
\begin{array}{c|c}
\hbox{eigenvalue} & \hbox{eigenvector} \\
\hline 
2 \omega_2 & \Hgraph{\alpha_1}{\alpha_2}{\alpha_2}{\alpha_1}\\
\hline
\omega_2 & \sum_{i=1}^g \Hgraph{\alpha_2}{\alpha_i}{\alpha_1}{\beta_i}
 =: \Ygraphbotbottop{\alpha_1}{\alpha_2}{\omega} \\
\hline
0 & \sum_{i,j=1}^{g} \Hgraph{\beta_i}{\alpha_j}{\alpha_i}{\beta_j} 
=: \strutgraphbot{\omega}{\omega}
\end{array}
$$
and this is easily checked.  Since
$$
\left[\Ygraphbotbottop{\alpha_1}{\alpha_2}{\beta_3},
\Ygraphbotbottop{\alpha_2}{\alpha_1}{\alpha_3} \right]_\star 
=  \Hgraph{\alpha_1}{\alpha_2}{\alpha_2}{\alpha_1}
\quad
\hbox{ and }
\quad
\sum_{i=2}^g \left[ \Ygraphbotbottop{\alpha_1}{\alpha_2}{\beta_1} , 
\Ygraphbotbottop{\beta_i}{\alpha_i}{\alpha_1} \right]_\star 
=  \Ygraphbotbottop{\alpha_1}{\alpha_2}{\omega},
$$
the summand $\Gamma_{2 \omega_2} + \Gamma_{\omega_2}$ of $\A_{2,0}^{c}$ 
is in the image of $[-,-]_\star$. 
In order to prove that the summand $2 \Gamma_0$ of $\A_{2,\ev}^{c}$
is in the image as well, we need to compute the Lie bracket of the $\SpHQ$-invariants of $\Lambda^2 \Ac{1}$.
Those $\SpHQ$-invariants of $\Lambda^2 \LHQ$ can be obtained 
by Morita's method \cite[\S 4.2]{Morita_survey}: We find
$$
T_1 := \sum_{i,j,k=1}^g \Ygraphbotbottop{\alpha_j}{\beta_j}{\alpha_i} \wedge
\Ygraphbotbottop{\alpha_k}{\beta_k}{\beta_i}
$$
corresponding to the trivalent graph with only one edge and two looped edges, and
$$
T_2 := \sum_{i,j,k=1}^g 
\Ygraphbotbottop{\alpha_j}{\alpha_k}{\alpha_i} \wedge \Ygraphbotbottop{\beta_j}{\beta_k}{\beta_i}
- \Ygraphbotbottop{\alpha_j}{\beta_k}{\alpha_i} \wedge \Ygraphbotbottop{\beta_j}{\alpha_k}{\beta_i}
- \Ygraphbotbottop{\alpha_k}{\beta_i}{\alpha_j} \wedge \Ygraphbotbottop{\beta_k}{\alpha_i}{\beta_j}
- \Ygraphbotbottop{\alpha_i}{\beta_j}{\alpha_k} \wedge \Ygraphbotbottop{\beta_i}{\alpha_j}{\beta_k}
$$
corresponding to the theta-shaped graph. Then, computations by hand lead to
$$
\begin{array}{l}
[T_1]_\star = - \frac{g(g-1)}{4}\ \thetagraph + (g-1)\ \strutgraphbot{\omega}{\omega}, \\

[T_2]_\star = -\frac{g (g-1) (2g-1)}{2}\ \thetagraph + 6 (g-1)\ \strutgraphbot{\omega}{\omega}.
\end{array}
$$
Those two vectors of $\A_{2,\ev}^{c}$ are not colinear since
$$
\left| \begin{array}{cc}
\frac{g(g-1)}{4} &  \frac{g (g-1) (2g-1)}{2} \\
(1-g) & 6 (1-g)
\end{array} \right| = g(g-1)^2 (g-2)\neq 0.
$$
We deduce that both $\thetagraph$ and $\strutgraphbot{\omega}{\omega}$ 
are in the image of $[-,-]_\star$, thus proving the lemma. 
\end{proof}

\subsection{The kernel of $b_2$}

$\Ker(b_2)$ has the following description.

\begin{lemma}
\label{lem:ker_bracket}
If $g\ge3$, then we have 
$$
\Ker(b_2) = \langle r_1, r_2 \rangle_{\SpHQ},
$$
where $r_1,r_2 \in \Lambda^2 \Ac{1} \simeq \Lie_2\left(\Lambda^3 H_\Q\right)$
are defined in \S~\ref{subsec:quadratic_relations}.
\end{lemma}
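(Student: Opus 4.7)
The plan is to combine an explicit computation of $b_2$ on $r_1,r_2$ with a representation-theoretic analysis of $\Ker(b_2)$, then match the two.

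First I would verify the inclusion $\langle r_1,r_2\rangle_{\SpHQ}\subset\Ker(b_2)$ directly from Lemma \ref{lem:bracket}. For $r_1$ (when $g\ge4$) this is immediate: the subspaces $\{\alpha_1,\alpha_2,\beta_2\}$ and $\{\alpha_3,\alpha_4,\beta_4\}$ of $\HQ$ are symplectically orthogonal, so every entry $\omega(x_i,y_j)$ appearing in Lemma \ref{lem:bracket} vanishes and hence both the H-graph sum and the determinant coefficient of $\thetagraph$ are zero. For $r_2$, I would expand $\alpha_g\wedge\omega=\sum_{i=1}^{g-1}\alpha_g\wedge\alpha_i\wedge\beta_i$ (the $i=g$ summand vanishes by antisymmetry), apply Lemma \ref{lem:bracket} to each bracket, and observe that only $i=1,2$ contribute H-graphs while the theta-term always vanishes because at least one row of the matrix $[\omega(x_k,y_l)]$ is identically zero. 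A short check using the AS and IHX relations shows the remaining H-graphs cancel.

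Next I would identify the $\SpHQ$-isotypic type of $\Ker(b_2)$. Combining the decomposition of $\Lambda^2\Lambda^3\HQ\simeq\Lambda^2\Ac{1}$ given by Lemma \ref{lem:l2l3} with the identification $\Img(b_2)=\A^c_{2,\ev}\simeq 2\Gamma_0+\Gamma_{\omega_2}+\Gamma_{2\omega_2}$ from Proposition \ref{prop:im_bracket}, and applying Schur's lemma to each isotypic component of the $\SpHQ$-equivariant surjection $b_2\colon\Lambda^2\Ac{1}\twoheadrightarrow\A^c_{2,\ev}$, I would read off
$$
\Ker(b_2)\;\simeq\;2\Gamma_{\omega_2}+\Gamma_{\omega_1+\omega_3}+\varepsilon_g,
$$
where $\varepsilon_g$ collects the higher-weight summands of $\Lambda^2\Lambda^3\HQ$ absent from $\A^c_{2,\ev}$, namely $2\Gamma_{\omega_4}+\Gamma_{\omega_2+\omega_4}+\Gamma_{\omega_6}$ for $g\ge6$, correspondingly fewer terms for $g=5,4$, and $\varepsilon_g=0$ for $g=3$.

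Finally I would establish the reverse inclusion by showing that the projections of $r_1$ and $r_2$ onto each isotypic component of $\Ker(b_2)$ span the required multiplicity. Note that $r_1$ has weight $L_1+L_3$ and $r_2$ has weight $L_1+L_g$ (coinciding when $g=3$); both weights occur in every irreducible $\Gamma_\lambda$ appearing in $\Ker(b_2)$. For each such $\lambda$ I would apply lowering operators of $\sp_{2g}$ to a chosen highest-weight vector of $\Gamma_\lambda$ (of the type given in \cite{HS}) to produce an explicit weight vector, and then compare coefficients with $r_1,r_2$ to extract the isotypic projections.

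The main obstacle is this last step for the multiplicity-two components, especially $\Gamma_{\omega_2}$ (present in the source with multiplicity $3$ but in the kernel with multiplicity $2$) and $\Gamma_{\omega_4}$ when $g\ge5$: there one must show that the images of $r_1$ and $r_2$ in the multiplicity space are linearly independent, rather than merely non-zero. This is ultimately a finite-dimensional linear-algebra computation parallel to the arguments of Hain \cite{Hain} and Habegger--Sorger \cite{HS} that produced $r_1,r_2$ as generators of $\mathrm{R}_2(\Igg)$, so consistency with Proposition \ref{prop:HHS} provides a guiding check on the bookkeeping.
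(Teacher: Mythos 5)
Your proposal is correct and follows essentially the same route as the paper: the easy inclusion $\langle r_1,r_2\rangle_{\SpHQ}\subset\Ker(b_2)$ is checked from Lemma \ref{lem:bracket}, and the reverse inclusion is reduced, via the decompositions of Lemma \ref{lem:l2l3} and Proposition \ref{prop:im_bracket}, to showing that $\langle r_1,r_2\rangle_{\SpHQ}$ contains every isotypic component of $\Lambda^2\Ac{1}$ other than $2\Gamma_0+\Gamma_{\omega_2}+\Gamma_{2\omega_2}$. The only divergence is at the step you rightly single out as the main obstacle (the multiplicity-two components): the paper does not carry out the lowering-operator computation but instead quotes the inclusion \eqref{eq:inclusions} from \cite[\S 2]{HS} for $g\ge6$ and notes that the argument there adapts to $g=3,4,5$.
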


\begin{proof}
Lemma \ref{lem:bracket} gives $\left[ r_1\right]_\star =0$  and
$\left[ r_2\right]_\star = 0$ (by the IHX relation).
Since the bracket $[-,-]_\star$ is $\SpHQ$-equivariant, we have 
\begin{equation}
\label{eq:inclusion_one}
\left\langle  r_1,  r_2 \right\rangle_{\SpHQ} \subset \Ker\left(b_2\right).
\end{equation}

Moreover, we have the following inclusions
\begin{equation}
\label{eq:inclusions}
\left\langle  r_1,  r_2 \right\rangle_{\SpHQ} \supset
\left\{\begin{array}{ll}
2 \Gamma_{\omega_2} + \Gamma_{\omega_1+\omega_3} + 2 \Gamma_{\omega_4} + \Gamma_{\omega_2+\omega_4} + \Gamma_{\omega_6}
& \hbox{if } g\geq 6,\\
2 \Gamma_{\omega_2} + \Gamma_{\omega_1+\omega_3} + 2 \Gamma_{\omega_4} + \Gamma_{\omega_2+\omega_4}
& \hbox{if } g=5,\\
2 \Gamma_{\omega_2} + \Gamma_{\omega_1+\omega_3} +\phantom{2} \Gamma_{\omega_4} + \Gamma_{\omega_2+\omega_4} 
& \hbox{if } g=4,\\
\phantom{2} \Gamma_{\omega_2} + \Gamma_{\omega_1+ \omega_3} & \hbox{if } g=3,
\end{array} \right.
\end{equation}
which is shown in \cite[\S 2]{HS} for $g\geq 6$:
It is easy to adapt the arguments used there to the cases $g=3,4,5$.
Then, it follows from (\ref{eq:decomposition}) and (\ref{eq:inclusions}) that
$\left\langle  r_1,  r_2 \right\rangle_{\SpHQ}$ contains a submodule
whose complement in  $\Lambda^2 \Ac{1}$ is isomorphic to $2 \Gamma_0 + \Gamma_{\omega_2} + \Gamma_{2 \omega_2}$.
By Proposition \ref{prop:im_bracket}, this submodule is isomorphic to $\Ker\left(b_2\right)$.
Thus, we conclude thanks to (\ref{eq:inclusion_one}).
\end{proof}

We can now complete the proof of Theorem \ref{th:degree_2}.
Johnson's formula computes the first Johnson homomorphism $\tau_1$ on
``Bounding Pair'' (BP) maps \cite[Corollary p.233]{Johnson}.
For $i=1,2$, one easily finds a pair of BP maps $f_i$ and $g_i$
having disjoint supports and satisfying $\tau_1(f_i) \wedge \tau_1(g_i) =
r_i$.
Consequently, the commutative diagram
(\ref{eq:pre-diagram}) gives in degree $2$
$$
\xymatrix{
{\Gr^\Gamma_2 \Igg \oQ} \ar[rr]^-{\Gr_2 \mapcyl \oQ} && 
{\Gr^Y_2 \Cgg \oQ} \ar[d]^-{\LMO_2}_-\simeq\\
{\Lie_2\left(\LHQ\right)/ \langle r_1,r_2 \rangle_{\SpHQ}}
\ar[u]^-{\overline{J}_2} \ar[rr]^-{\overline{Y}_2} && {\A_2^{<,c}}.
}
$$
The kernel of $Y_2 = \chi_2 \circ b_2$ is $\langle r_1,r_2 \rangle_{\SpHQ}$ (by Lemma \ref{lem:ker_bracket}),
so that the map $\overline{J}_2$ in the prevous diagram is injective. Thus, we obtain
\begin{gather}
  \label{e10}
\hbox{R}_2(\Igg) = \langle r_1,r_2 \rangle_{\SpHQ}\quad
\text{for $g\ge3$},
\end{gather}
which slightly generalizes Proposition \ref{prop:HHS}.  
We conclude that $\Ker(Y_2) = {\rm{R}}_2(\Igg)$.

\vspace{0.5cm}

\section{Stability with respect to the genus}

\label{sec:stability}

In this short section, we consider stability with respect to the genus 
for the filtrations $\Gamma$ and $Y$ on $\Igg$ and $\Cgg$, respectively.  
We start by fixing surface inclusions
$$
\Sigma_{0,1}\subset \Sigma_{1,1}\subset \Sigma_{2,1} \subset  \cdots.
$$

Thus, we have group monomorphisms  
$$
\I_{0,1} \hookrightarrow \I_{1,1} \hookrightarrow \I_{2,1} \hookrightarrow \cdots,
$$
which allows us to regard $\Igg$ as a subgroup of $\glim\Igg$.
Then, the \emph{stabilized} lower central series of $\Igg$ is defined by
$$ 
  \Gamma^{\mathrm{stab}}_i\Igg:=
  \Igg\cap
  \Gamma_i\glim\Igg
  =  \Igg\cap
  \glim\Gamma_i\Igg.
$$

Similarly, we can regard $\Cgg$ as a submonoid of $\glim\Cgg$.
The \emph{stabilized} $Y$-filtration of $\Cgg$ is defined by
$$ 
  Y_i^{\mathrm{stab}} \Cgg := \Cgg\cap \glim Y_i \Cgg.
$$
The following proposition means that the $Y$-filtration for homology
cylinders is {\em stable}.

\begin{proposition}
\label{prop:stability_Y}
For all $g\ge0$ and for all $i\ge1$, we have
\begin{gather*}
  Y_i \Cgg = Y_i^{\mathrm{stab}} \Cgg.
\end{gather*}
\end{proposition}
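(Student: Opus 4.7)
The inclusion $Y_i\Cgg \subseteq Y_i^{\mathrm{stab}}\Cgg$ is immediate from the definitions: a $Y_i$-clasper $C \subset \sgg \times [-1,1]$ is also a $Y_i$-clasper in $\Sigma_{g',1}\times [-1,1]$, and the homology cylinder $(\sgg \times [-1,1])_C$ stabilizes to $(\Sigma_{g',1}\times [-1,1])_C$; consequently, any $M\in Y_i\Cgg$ stabilizes into $Y_i\Cyl_{g',1}$ for every $g'\ge g$.

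For the non-trivial inclusion $Y_i^{\mathrm{stab}}\Cgg \subseteq Y_i\Cgg$, by induction on $g'-g$ it suffices to treat the case $g'=g+1$. Write $N := (\Sigma_{g+1,1}\setminus\operatorname{int}\sgg)\times [-1,1]$ for the trivial extension cobordism, so that the stabilization inclusion $\Cgg \hookrightarrow \Cyl_{g+1,1}$ is $M\mapsto M\cup N$. The plan is to use the LMO invariant developed in \S\ref{subsec:LMO}--\ref{subsec:Malcev}. Denote by $\widetilde H := H_1(\Sigma_{g+1,1};\Q)$ the genus-$(g+1)$ homology; the symplectic embedding $\HQ \hookrightarrow \widetilde H$ induces a graded algebra monomorphism
\begin{gather*}
\iota_*\colon \A^<\sHQ \longrightarrow \A^<(\widetilde H),
\end{gather*}
obtained by pulling back labels, which preserves the internal-degree filtration. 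By functoriality of the LMO functor of \cite{CHM}, the trivial extension satisfies $\LMO(N)=\emptyset$ and the monoidal structure of the LMO functor identifies $\LMO(M\cup N)$ with $\iota_*(\LMO(M))$; hence the square
\begin{gather*}
\xymatrix{
\Cgg \ar[rr]^-{M\mapsto M\cup N} \ar[d]_-{\LMO} & & \Cyl_{g+1,1} \ar[d]^-{\LMO}\\
\A^<\sHQ \ar[rr]^-{\iota_*} & & \A^<(\widetilde H)
}
\end{gather*}
commutes.

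Suppose now that $M\in\Cgg$ satisfies $M\cup N\in Y_i\Cyl_{g+1,1}$. By \S\ref{subsec:LMO} one has $\LMO(M\cup N) - \emptyset\in \A^<(\widetilde H)_{\ge i}$. Commutativity of the square together with the fact that $\iota_*$ is a degree-preserving injection on each graded piece yields $\LMO(M) - \emptyset\in \A^<\sHQ_{\ge i}$. Invoking the results of \S\ref{subsec:Malcev}---the coincidence $F=F'$ of filtrations on $\Q[\Cgghat]$ together with the LMO isomorphism $\Q[\Cgghat]/F_i\simeq \A^</\A^<_{\ge i}$---one deduces that $M$ is trivial in $\Cgghat/\Yhat_i\Cgghat$. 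Since the canonical map $\Cgg/Y_i\Cgg \to \Cgghat/\Yhat_i\Cgghat$ is an isomorphism (the projective system defining $\Cgghat$ identifies the two quotients), this gives $M\in Y_i\Cgg$, as required.

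The main obstacle will be the last implication, ``$\LMO(M)-\emptyset\in \A^<\sHQ_{\ge i}$ implies $M\in Y_i\Cgg$'': this is the non-trivial detection direction, which rests on the universality of the LMO among rational finite-type invariants together with the delicate clasper-calculus identification $F=F'$ between the $Y$-filtration and the augmentation filtration on $\Q[\Cgghat]$. An alternative, purely topological route would attempt to isotope directly a $Y_i$-clasper realizing $M\cup N\sim_{Y_i}$ trivial into $\sgg\times [-1,1]$ via leaf-splitting moves; however, such moves are only guaranteed to produce $Y_{i+1}$-equivalences, so one would have to iterate them using the nilpotent structure of $\Cgg/Y_i$, which is comparatively more technical.
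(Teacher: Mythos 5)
Your reduction to $g'=g+1$, the monoidality of the LMO functor under stabilization, and the injectivity of $\iota_*\colon\A^<\sHQ\to\A^<(\widetilde H)$ (which you assert but do not prove; it does hold, e.g$.$ by transporting a linear retraction $\widetilde H\to\HQ$ through the symmetrization isomorphism $\chi$) are all acceptable. The genuine gap is the final detection step. From $\LMO(M)-\varnothing\in\A^<_{\geq i}$ and the isomorphism $\Q[\Cgghat]/F_i\Q[\Cgghat]\simeq\A^</\A^<_{\geq i}$ you may only conclude that $M-1\in F_i\Q[\Cgghat]$, i.e$.$ that $M$ lies in the $i$-th \emph{rational dimension subgroup} of the filtered group $\Cgghat$. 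By the generalized Malcev theorem quoted in the appendix (proof of Theorem \ref{th:divisible_closure}), this dimension subgroup is the radical closure $\sqrt{\Yhat_i\Cgghat}$, not $\Yhat_i\Cgghat$ itself; the two differ exactly by torsion in the quotients. Such torsion is really present: $\Cgg/Y_2$ contains $2$-torsion detected by Birman--Craggs-type homomorphisms (cf$.$ \cite{MM}), which the rational invariant $\LMO$ cannot see. So ``$\LMO(M)-\varnothing\in\A^<_{\geq i}$ implies $M\in Y_i\Cgg$'' is false already for $i=2$, and your argument only yields $M^k\in Y_i\Cgg$ for some $k\geq1$. Since Proposition \ref{prop:stability_Y} is an integral statement, no argument routed through the rational LMO invariant can close this gap.

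The paper's proof is instead purely topological and quite short: writing $M'=M\cup N$ for the stabilization, one takes mutually disjoint degree-$i$ graph claspers $C_1,\dots,C_p$ in $\Sigma_{g',1}\times[-1,1]$ whose surgery yields $M'$, and then uses a (non-proper) embedding $f\colon\Sigma_{g',1}\times[-1,1]\hookrightarrow\tsgg\times[-1,1]$ --- where $\tsgg$ is $\sgg$ with a collar attached --- which is the identity over $\sgg$ and carries the boundary cylinder onto the boundary cylinder. Surgery on $f(C_1),\dots,f(C_p)$ then exhibits $M$ itself as obtained from the trivial cylinder by $Y_i$-surgeries. No leaf-splitting or iteration over the nilpotent quotients (as in your sketched ``alternative route'') is needed, and the argument is integral. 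If you want to salvage an invariant-theoretic proof you would need invariants with integral (including torsion) coefficients that are universal for the $Y$-filtration, which are not available here.
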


Proposition \ref{prop:stability_Y} is proved at the end of this section,
and we now use it to state Conjecture \ref{r3} in a different way:

\begin{conjecture}
  \label{r15}
  For any $g\ge0$, the stabilized lower central series of the Torelli
  group $\Igg$ coincides with the $Y$-filtration:
  \begin{gather}
    \label{e12}
    \Gamma^{\mathrm{stab}}_j\Igg = \Igg \cap
    Y_j \Cgg \quad \text{for all $j\ge1$}.
  \end{gather}
\end{conjecture}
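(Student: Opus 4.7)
The plan is to derive \eqref{e12} as a direct consequence of Proposition \ref{prop:stability_Y} together with the injectivity postulated in Conjecture \ref{r3}; in fact, the argument will show that the two conjectures are equivalent. The inclusion ``$\subset$'' is unconditional, while ``$\supset$'' is where the genuine difficulty resides.

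For $\Gamma^{\mathrm{stab}}_j\Igg \subset \Igg \cap Y_j\Cgg$, I would start from $\mapcyl(\Gamma_j \I_{h,1}) \subset Y_j \mathcal{C}_{h,1}$, valid for every $h\ge g$, and observe that these inclusions commute with the surface stabilization maps, so that they pass to the direct limit. Hence, if $\phi \in \Gamma^{\mathrm{stab}}_j\Igg = \Igg \cap \glim \Gamma_j\Igg$, then $\mapcyl(\phi)\in \Cgg\cap\glim Y_j\Cgg = Y^{\mathrm{stab}}_j\Cgg$, which equals $Y_j\Cgg$ by Proposition \ref{prop:stability_Y}.

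For the reverse inclusion $\Igg\cap Y_j\Cgg \subset \Gamma^{\mathrm{stab}}_j\Igg$, I would argue by induction on $j$. The case $j=1$ is trivial since $\Gamma^{\mathrm{stab}}_1\Igg=\Igg$. For the inductive step, pick $\phi\in\Igg\cap Y_{j+1}\Cgg \subset \Igg\cap Y_j\Cgg=\Gamma^{\mathrm{stab}}_j\Igg$ by the inductive hypothesis. Using the standard identification $\Gr^\Gamma_j(\glim\Igg)\cong\glim\Gr^\Gamma_j\Igg$ (which follows from exactness of the direct limit and the fact that the lower central series commutes with filtered colimits of groups), the element $\phi$ represents a class whose image under $\glim\Gr_j\mapcyl$ is the class of $\mapcyl(\phi)\in Y_{j+1}\Cgg$ in $\glim\Gr^Y_j\Cgg$, which vanishes. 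Injectivity of $\glim\Gr_j\mapcyl$, i.e.\ Conjecture \ref{r3}, then forces $\phi\in\Gamma_{j+1}(\glim\Igg)\cap\Igg = \Gamma^{\mathrm{stab}}_{j+1}\Igg$, completing the induction. Run backwards, the same argument shows that \eqref{e12} implies Conjecture \ref{r3}, establishing the announced equivalence.

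The main obstacle is therefore Conjecture \ref{r3} itself. Via the commutative diagram of Theorem \ref{th:diagram} and Hain's presentation (Theorem \ref{th:Hain}), injectivity of $\glim\Gr\mapcyl$ reduces to computing the kernel of the algebraically defined map $\overline{Y}\colon\Lie(\LHQ)/\mathrm{R}(\Igg)\to\AgcHQ$ in every internal degree. Theorem \ref{th:degree_2} settles degree $2$, but the higher-degree behaviour is intertwined with the still poorly understood structure of the Johnson filtration and appears to be the essential difficulty; a partial strategy would be to analyse $\overline{Y}$ degree by degree, perhaps after restricting to the ``tree level'' where Hain--Sorger type presentations are available.
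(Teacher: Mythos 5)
Your proposal matches the paper's treatment of this statement: the paper likewise proves only the unconditional inclusion ``$\subset$'' via Proposition \ref{prop:stability_Y} and then establishes the equivalence of \eqref{e12} with Conjecture \ref{r3} by the same induction on $j$ (reformulating injectivity of $\glim\Gr_i\mapcyl$ as the inclusion $\Gamma_i\Igg\cap Y_{i+1}\Cgg\subset\Gamma^{\mathrm{stab}}_{i+1}\Igg$), leaving the statement itself conjectural. You correctly identify that no unconditional proof is available and that the remaining difficulty is Conjecture \ref{r3} itself.
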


\noindent
The inclusion ``$\subset$'' in \eqref{e12} holds true, 
and it can be deduced from Proposition \ref{prop:stability_Y} as follows.  
If $x\in\Gamma^{\mathrm{stab}}_j\Igg$, then
$x\in\Igg$ and $x\in\Gamma_j \I_{g',1}$ for some $g'\ge g$.
The latter implies that $x\in Y_j \Cyl_{g',1}$ and, since $x\in\Cgg$, 
we deduce that $x\in Y_j^{\mathrm{stab}} \Cgg=Y_j \Cgg$.

\begin{proof}[Proof that Conjectures \ref{r3} and \ref{r15} are
    equivalent]
  Conjecture \ref{r3} is equivalent to the injectivity of
  \begin{gather}
    \label{e14}
    \glim\Gr_i \mapcyl \colon
    \glim\Gamma_i\Igg/\Gamma_{i+1}\Igg
    \longrightarrow
    \glim Y_i \Cgg /Y_{i+1}
  \end{gather}
  for all $i\geq1$.
  By Proposition \ref{prop:stability_Y}, injectivity of \eqref{e14} is equivalent
  to the statement that, if $x\in\Gamma_i\Igg$ satisfies $x\in Y_{i+1} \Cgg$
  then $x\in\Gamma^{\mathrm{stab}}_{i+1}\Igg$. 
  So,  Conjecture \ref{r3} is equivalent to the statement that
  \begin{gather}
    \label{e15}
    \Gamma_i\Igg\cap Y_{i+1}\Cgg \subset \Gamma^{\mathrm{stab}}_{i+1}\Igg
    \quad \text{for all $i\ge1$}.
  \end{gather}

  Clearly, the inclusion ``$\supset$'' in \eqref{e12}   implies \eqref{e15}. 
  Conversely, using Proposition \ref{prop:stability_Y} again,
  it is easily shown by induction on $j\geq 1$ that \eqref{e15} implies the
  inclusion ``$\supset$'' in \eqref{e12}.  
  Thus Conjecture \ref{r3} is equivalent to Conjecture \ref{r15}.
\end{proof}

\begin{proof}[Proof of Proposition \ref{prop:stability_Y}]
  The inclusion ``$\subset$'' is obvious.  To prove ``$\supset$'',
  suppose that $x\in Y_i^{\mathrm{stab}}\Cgg$ is
  represented by a homology cylinder $M$ over $\sgg$.  Then we have
  $x\in\Cgg$ and $x\in Y_i \Cyl_{g',1}$ for some $g' > g$.  
  In other words, the homology cylinder
  \begin{gather*}
    M' = M \cup_{\partial\sgg\times[-1,1]} 
    ((\Sigma_{g',1}\setminus\operatorname{int}\sgg)\times[-1,1])
  \end{gather*}
  over $\Sigma_{g',1}$, which represents
  $x\in Y_i \Cyl_{g',1}$, is $Y_i$-equivalent to the trivial cylinder
  $M_0 :=\Sigma_{g',1}\times[-1,1]$.  Hence there are mutually disjoint,
  connected graph claspers $C_1,\dots,C_p$ ($p\ge0$) in $M_0$, each having $i$ nodes, 
  such that the result $(M_0)_{C_1,\dots,C_p}$ from $M_0$ of surgery along
  $C_1,\dots,C_p$ is homeomorphic to $M'$ relative to boundary.

  Let $\tsgg$ denote a surface obtained from $\sgg$ by attaching
  a collar $N:=S^1\times[0,1]$ along $\partial\sgg$.  Thus $\tsgg$
  is a compact, oriented surface of genus $g$ with one boundary
  component, which contain $\sgg$ in its interior.  
  There is a (not proper) embedding
  \begin{gather*}
    f\colon \Sigma_{g',1}\times[-1,1] \hookrightarrow \tsgg\times[-1,1]
  \end{gather*}
  such that
  \begin{itemize}
  \item $f$ is the identity on $\sgg\times[-1,1]$,
  \item $f$ maps $\partial\Sigma_{g',1}\times[-1,1]$ homeomorphically
  onto $\partial\tsgg\times[-1,1]$.
  \end{itemize}
  Let $M'':=(\tsgg\times[-1,1])_{f(C_1),\dots,f(C_p)}$ be the homology
  cylinder over $\tsgg$ obtained from the cylinder $\tsgg\times[-1,1]$
  by surgery along $f(C_1),\dots,f(C_p)$.  If we regard $M''$ as a
  homology cylinder over $\sgg$ by a homeomorphism $\tsgg\cong\sgg$
  which is identity outside a small neighborhood of the collar
  $N\subset\tsgg$, then $M''$ is homeomorphic to $M$ relative to boundary.
  Hence $M$ is $Y_i$-equivalent to the trivial cylinder, i.e$.$ $x\in Y_i \Cgg$.
\end{proof}

\vspace{0.5cm}

\section{The closed surface case}

\label{sec:closed_case}

In this section, we extend our results to the case of a closed connected oriented surface of genus $g$,
which we denote by $\Sigma_g$. We set $H:=H_1(\Sigma_g)$ and $\omega: H \otimes H \to \Z$ denotes 
the intersection pairing of $\Sigma_g$. Let $\Ig$ be the Torelli group of the surface $\Sigma_g$,
and let $\Cg$ be the monoid of homology cylinders over $\Sigma_g$.

\subsection{The ideals $I^<\subset \Ag{}$ and $I^{<,c}\subset \Agc{}$}

\label{subsec:ideals}

In this subsection, we introduce an ideal $I^<$ in the algebra $\Ag{}$ and an
ideal $I^{<,c}$ in the Lie algebra $\Agc{}$.  The latter appears in \cite{Habiro}.

By an {\em $\omega $-diagram}, we mean a Jacobi diagram such that each
external vertex is labeled by either an element of $\HQ$ or the
symbol ``$\omega $'', and such that each component has at least one internal vertex
or has at least one vertex labeled $\omega $.  An external
vertex labeled $\omega $ is called an {\em $\omega $-vertex}.  
The {\em degree} of an $\omega $-diagram $D$ is defined to be the sum of the internal degree
of $D$ and the number of $\omega $-vertices in $D$.

By an {\em \og-diagram}, we mean an $\omega $-diagram whose external vertices are totally ordered.  
We associate to each \og-diagram an element in $\AgHQ$ as follows:
\begin{equation}
\label{eq:split_omega}
\begin{array}{c}
{\relabelbox \small
\epsfxsize 3truein \epsfbox{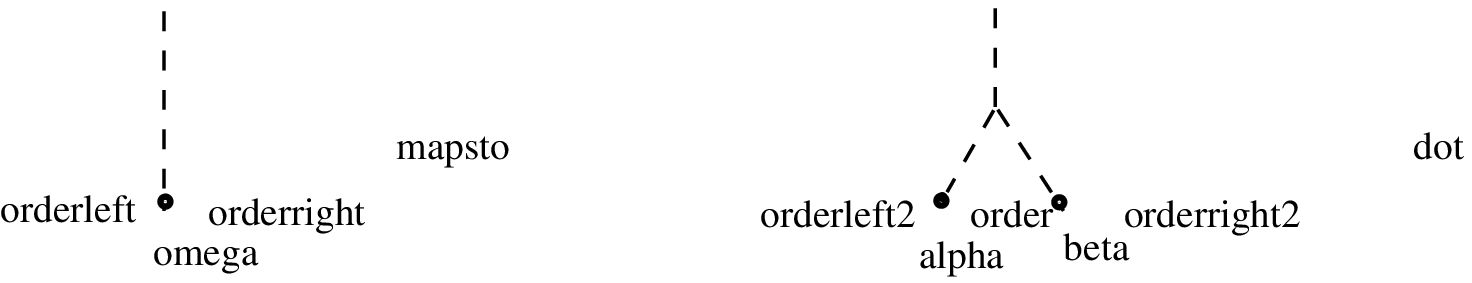}
\adjustrelabel <-0.1cm,-0cm> {orderleft}{$\cdots<$}
\adjustrelabel <0cm,-0cm> {orderright}{$<\cdots$}
\adjustrelabel <0cm,-0cm> {orderleft2}{$\cdots<$}
\adjustrelabel <0cm,-0cm> {orderright2}{$<\cdots$}
\adjustrelabel <0cm,-0cm> {order}{$<$}
\adjustrelabel <0cm,-0cm> {mapsto}{${\displaystyle \longmapsto \quad \sum_{i=1}^g}$}
\adjustrelabel <0cm,-0cm> {omega}{$\omega$}
\adjustrelabel <0cm,-0.0cm> {alpha}{$\alpha_i$}
\adjustrelabel <0cm,-0.05cm> {beta}{$\beta_i$}
\adjustrelabel <0cm,-0cm> {dot}{.}
\endrelabelbox}
\end{array}
\end{equation}

\noindent
Using this rule, we regard \og-diagrams as elements of $\Ag{}$.
Some basic properties for \og-diagrams are in order.

\begin{lemma}[STU-relation for $\omega $-vertex]
\label{lem:STU-omega}
In the space $\Ag{}$, we have the following identities:

\centerline{\relabelbox \small
\epsfxsize 4.5truein \epsfbox{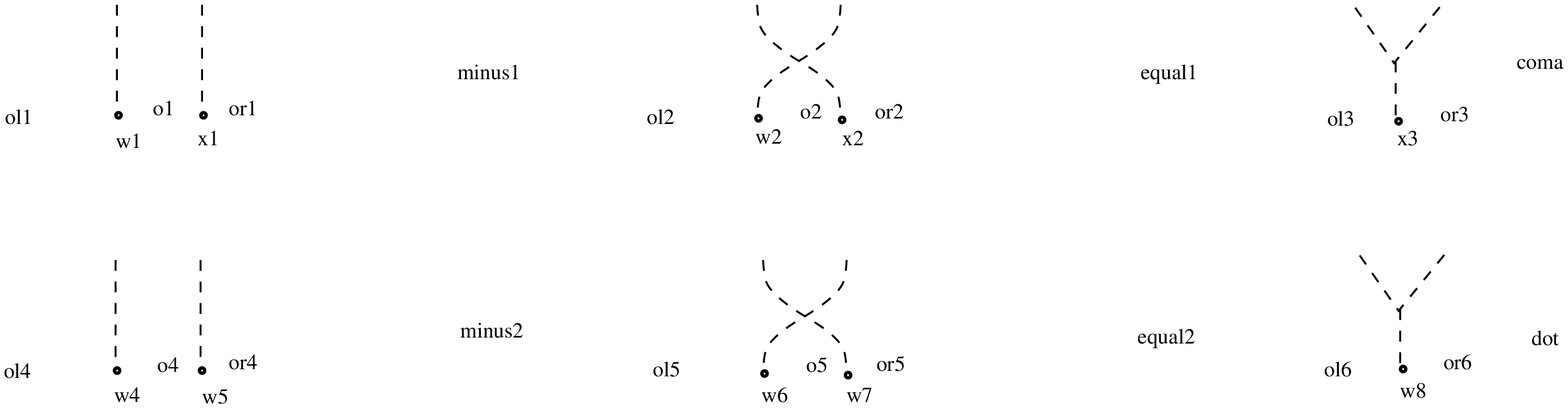}
\adjustrelabel <-0.1cm,-0.05cm> {ol1}{$\cdots<$}
\adjustrelabel <0cm,-0.1cm> {or1}{$<\cdots$}
\adjustrelabel <0cm,-0.1cm> {o1}{$<$}
\adjustrelabel <-0.1cm,-0.05cm> {ol2}{$\cdots<$}
\adjustrelabel <0cm,-0.1cm> {or2}{$<\cdots$}
\adjustrelabel <-0.1cm,-0.1cm> {o2}{$<$}
\adjustrelabel <-0.1cm,-0.05cm> {ol4}{$\cdots<$}
\adjustrelabel <0cm,-0.1cm> {or4}{$<\cdots$}
\adjustrelabel <-0.05cm,-0.1cm> {o4}{$<$}
\adjustrelabel <-0.1cm,-0.05cm> {ol5}{$\cdots<$}
\adjustrelabel <0cm,-0.1cm> {or5}{$<\cdots$}
\adjustrelabel <-0.1cm,-0.1cm> {o5}{$<$}
\adjustrelabel <-0.35cm,-0.05cm> {ol3}{$\cdots<$}
\adjustrelabel <-0.1cm,-0.05cm> {or3}{$<\cdots$}
\adjustrelabel <-0.3cm,-0cm> {ol6}{$\cdots<$}
\adjustrelabel <-0.15cm,-0.05cm> {or6}{$<\cdots$}
\adjustrelabel <-0cm,-0cm> {minus1}{$-$}
\adjustrelabel <0cm,-0cm> {minus2}{$-$}
\adjustrelabel <-0.4cm,-0cm> {equal1}{$= \quad -$}
\adjustrelabel <-0.4cm,-0cm> {equal2}{$= \quad -$}
\adjustrelabel <0cm,-0.1cm> {w1}{$\omega$}
\adjustrelabel <0cm,-0.1cm> {x1}{$x$}
\adjustrelabel <0cm,-0.1cm> {w2}{$x$}
\adjustrelabel <0cm,-0.1cm> {x2}{$\omega$}
\adjustrelabel <0cm,-0.1cm> {x3}{$x$}
\adjustrelabel <0cm,-0.1cm> {w4}{$\omega$}
\adjustrelabel <0cm,-0.1cm> {w5}{$\omega$}
\adjustrelabel <0cm,-0.1cm> {w6}{$\omega$}
\adjustrelabel <0cm,-0.1cm> {w7}{$\omega$}
\adjustrelabel <0cm,-0.1cm> {w8}{$\omega$}
\adjustrelabel <0.4cm,-0cm> {coma}{,}
\adjustrelabel <0.4cm,-0cm> {dot}{.}
\endrelabelbox}
\end{lemma}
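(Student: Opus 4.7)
The plan is to prove both identities by expanding each $\omega$-vertex via equation (\ref{eq:split_omega}) and then applying the STU-like relation defining $\A^<\sHQ$.

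For the first identity, I would first expand the $\omega$-vertex on each side, obtaining
\begin{gather*}
(\cdots < \omega < x < \cdots) - (\cdots < x < \omega < \cdots) = \sum_{i=1}^g \bigl[(\cdots < \alpha_i < \beta_i < x < \cdots) - (\cdots < x < \alpha_i < \beta_i < \cdots)\bigr].
\end{gather*}
I would then apply the STU-like relation twice in succession---once to swap $x$ past $\beta_i$, once to swap it past $\alpha_i$---so that each term in the summand above becomes a sum of contraction terms with coefficients $\omega(\beta_i, x)$ and $\omega(\alpha_i, x)$. Using the multilinearity relation together with the symplectic basis identity
\begin{gather*}
x = \sum_{i=1}^g \bigl[\omega(\alpha_i, x)\,\beta_i - \omega(\beta_i, x)\,\alpha_i\bigr],
\end{gather*}
the two sums should collapse into a single diagram in which the $\omega$-vertex has been absorbed by the $x$-vertex, producing the stated right-hand side.

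For the second identity, I would run the same scheme with two $\omega$-vertices in place of one $\omega$-vertex and one $x$-vertex. Expanding both $\omega$-vertices yields a double sum indexed by $(i,j)$, and four applications of the STU-like relation are needed to swap the external vertices $\alpha_i, \beta_i$ past $\alpha_j, \beta_j$. The resulting cross-contractions carry coefficients $\omega(\alpha_i, \alpha_j) = \omega(\beta_i, \beta_j) = 0$ and $\omega(\alpha_i, \beta_j) = \delta_{ij}$, so that only a single-indexed sum survives. Read backwards through (\ref{eq:split_omega}), this surviving sum is precisely a single $\omega$-vertex, matching the right-hand side.

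The main obstacle is the bookkeeping of half-edges: carefully tracking which half-edge of each expanded $\omega$-vertex is contracted in each STU application, and checking that the surviving sums over the symplectic basis recombine correctly into the advertised single $\omega$-vertex (or single $x$-vertex) on the right. This is a routine computation once the conventions are fixed, but it requires attention to the signs coming from the antisymmetry of $\omega = \sum_i \alpha_i \wedge \beta_i$ together with the AS relation at the trivalent vertex where an $\omega$-half-edge attaches; the basis-independence of the right-hand side of (\ref{eq:split_omega}) guarantees these sign conventions are consistent.
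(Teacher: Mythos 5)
Your treatment of the first identity is essentially the paper's argument: expand the $\omega$-vertex by the rule (\ref{eq:split_omega}), apply the STU-like relation $2g$ times (twice for each index $i$), and recombine the contraction terms by multilinearity using the expansion of $x$ in the symplectic basis. That part is fine.

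For the second identity there is a concrete gap. After you expand both $\omega$-vertices and discard the contractions with coefficient $\omega(\alpha_i,\alpha_j)=\omega(\beta_i,\beta_j)=0$ and the off-diagonal ones with $\omega(\alpha_i,\beta_j)=\delta_{ij}$, each surviving term is obtained by gluing one leg of the first expanded $\omega$-vertex to one leg of the second. The resulting local configuration is therefore an \emph{H-shape}: two trivalent vertices joined by a new edge, each still carrying one external leg labeled $\alpha_i$ or $\beta_i$, attached respectively to the two edges that originally carried the $\omega$-labels. By contrast, the right-hand side of the identity, expanded via (\ref{eq:split_omega_bis}), is a \emph{Y-shape}: a single trivalent vertex joining the two original edges to a new $\omega$-vertex, i.e$.$ to $\sum_i(\cdot\,|\,\alpha_i\beta_i)$. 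These two families of diagrams are distinct elements of the space spanned by diagrams modulo AS and multilinearity alone; they agree only after applying the IHX relation to the new internal edge, which converts the difference of the two surviving H-configurations into the Y-configuration carrying the $\omega$-vertex. So the step ``read backwards through (\ref{eq:split_omega})'' does not close the argument by itself --- you must invoke IHX (together with AS to fix the vertex orientations). This is exactly how the paper finishes: it proves the second identity by $2g$ applications of the first identity \emph{and} the IHX relation. Your route of expanding both $\omega$-vertices simultaneously is a harmless variant of the paper's, but the IHX step cannot be absorbed into ``routine bookkeeping'' of half-edges and signs.
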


\begin{proof}
The first identity follows from $2g$ applications of the STU-like relation in the space $\Ag{}$.
The second identity is proved by applying $2g$ times the first identity and using the IHX relation.
\end{proof}

\begin{lemma}[Commutation identity for $\omega $-vertex]
\label{lem:commutation}
In the space $\Ag{}$, an $\omega$-vertex commutes with any \og-diagram, i.e.
  
\centerline{\relabelbox \small
\epsfxsize 5truein \epsfbox{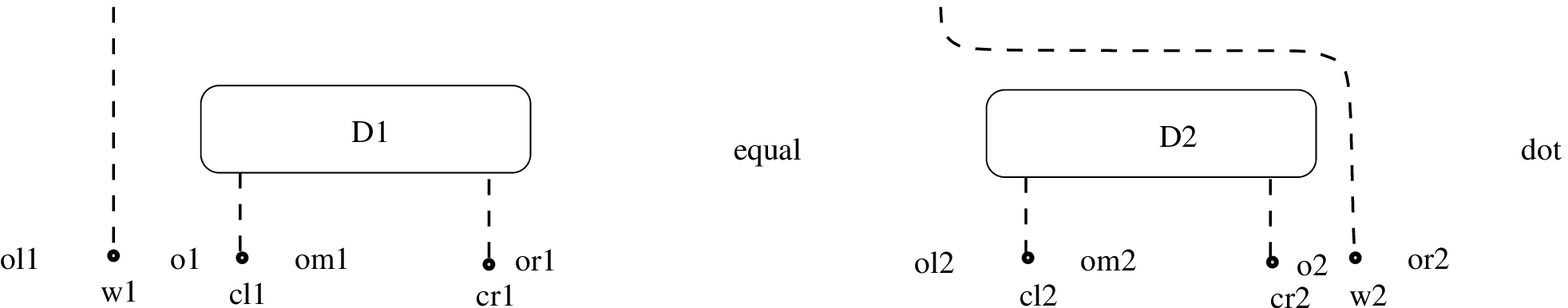}
\adjustrelabel <0cm,-0cm> {ol1}{$\cdots<$}
\adjustrelabel <0cm,-0cm> {or1}{$<\cdots$}
\adjustrelabel <0cm,-0cm> {o1}{$<$}
\adjustrelabel <0cm,-0cm> {om1}{$<\cdots<$}
\adjustrelabel <0cm,-0cm> {w1}{$\omega$}
\adjustrelabel <0cm,-0cm> {cl1}{$x_1$}
\adjustrelabel <0cm,-0cm> {cr1}{$x_r$}
\adjustrelabel <0cm,-0cm> {D1}{$D$}
\adjustrelabel <0cm,-0cm> {ol2}{$\cdots<$}
\adjustrelabel <0cm,-0cm> {or2}{$<\cdots$}
\adjustrelabel <0cm,-0cm> {o2}{$<$}
\adjustrelabel <0cm,-0cm> {om2}{$<\cdots<$}
\adjustrelabel <0cm,-0cm> {w2}{$\omega$}
\adjustrelabel <0cm,-0cm> {cl2}{$x_1$}
\adjustrelabel <0cm,-0cm> {cr2}{$x_r$}
\adjustrelabel <0cm,-0cm> {D2}{$D$}
\adjustrelabel <0cm,-0cm> {equal}{$=$}
\adjustrelabel <0cm,-0cm> {dot}{,}
\endrelabelbox}

\noindent
where $x_1,\dots,x_r$ belong to $H_\Q \cup \{\omega\}$.
\end{lemma}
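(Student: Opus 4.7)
The plan is to derive Lemma \ref{lem:commutation} directly from the preceding Lemma \ref{lem:STU-omega} by a straightforward induction. The key observation is that a general \og-diagram $D$ interacts with the $\omega$-vertex only through its external vertices and their positions in the total order; the internal edges and internal vertices of $D$ are irrelevant to an STU-like swap, which depends only on the labels and cyclic ordering of the two external vertices actually being exchanged.

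Concretely, I induct on the total number $N$ of external vertices appearing in the concatenated configuration $x_1<\cdots<x_r<D$, viewed simply as a totally ordered list of external vertices (each labeled by an element of $\HQ$ or by the symbol $\omega$). For $N=1$, the desired identity is a single swap between the $\omega$-vertex and one adjacent external vertex; this is supplied directly by Lemma \ref{lem:STU-omega}, via its first identity when the adjacent vertex carries an $\HQ$-label and via its second identity when the adjacent vertex is itself an $\omega$-vertex. For the inductive step, I apply Lemma \ref{lem:STU-omega} to swap the $\omega$-vertex past the leftmost external vertex of the configuration, thereby reducing to a configuration involving only $N-1$ external vertices, to which the inductive hypothesis applies.

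The only genuine difficulty lies in Lemma \ref{lem:STU-omega} itself, not in the present statement. There, one expands each $\omega$-vertex via \eqref{eq:split_omega} into $\sum_{i=1}^g(\alpha_i<\beta_i)$, performs $2g$ successive STU-like swaps over the symplectic basis, and checks that the accumulated correction terms cancel---immediately for the $\omega$-versus-$\HQ$ swap (first identity), and after an additional use of the IHX relation to collapse the residual contributions for the $\omega$-versus-$\omega$ swap (second identity). Once Lemma \ref{lem:STU-omega} is granted, the present lemma follows with essentially no further work beyond the iterative argument above.
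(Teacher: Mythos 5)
There is a genuine gap, and it is exactly where the real content of the lemma lies. You treat Lemma \ref{lem:STU-omega} as if it asserted that the $\omega$-vertex commutes with a single adjacent external vertex, so that the full statement follows by iterating swaps. But Lemma \ref{lem:STU-omega} is an STU-\emph{type} relation: the difference of the two orderings is not zero but equals (minus) a specific correction diagram in which the $\omega$-edge is grafted onto the leg being crossed. Your base case $N=1$ is therefore already false as stated, and in the inductive step each swap deposits a further nonvanishing correction term that your argument simply discards. Your remark that ``the internal edges and internal vertices of $D$ are irrelevant'' is the opposite of the truth: the correction terms do not cancel leg by leg, and the only reason they cancel in aggregate is an argument that uses the internal structure of $D$.

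The paper's proof makes this precise: summing the $r$ correction terms produced by moving the $\omega$-vertex past all the external vertices $x_1,\dots,x_r$ of $D$ yields exactly a ``box'' (Figure \ref{fig:box}) attached simultaneously to all the legs of $D$, and this box is then slid upwards over the internal part of $D$ using the AS and IHX relations (Figure \ref{fig:slide_box}) until it disappears, giving $d=0$. This global cancellation step --- identifying the accumulated corrections with a box and invoking the box-sliding identity --- is the entire point of the lemma and is absent from your proposal. To repair your argument you would need to keep track of the correction term at each swap, observe that their sum over all legs of $D$ is the boxed diagram, and then supply the IHX/box-sliding argument; at that point you would have reproduced the paper's proof rather than shortened it.
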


\begin{proof}
According to Lemma \ref{lem:STU-omega},
the difference between the left-hand side term and the right-hand side term is given by 
$$
d:= \begin{array}{c}
{\relabelbox \small
\epsfxsize 1.5truein \epsfbox{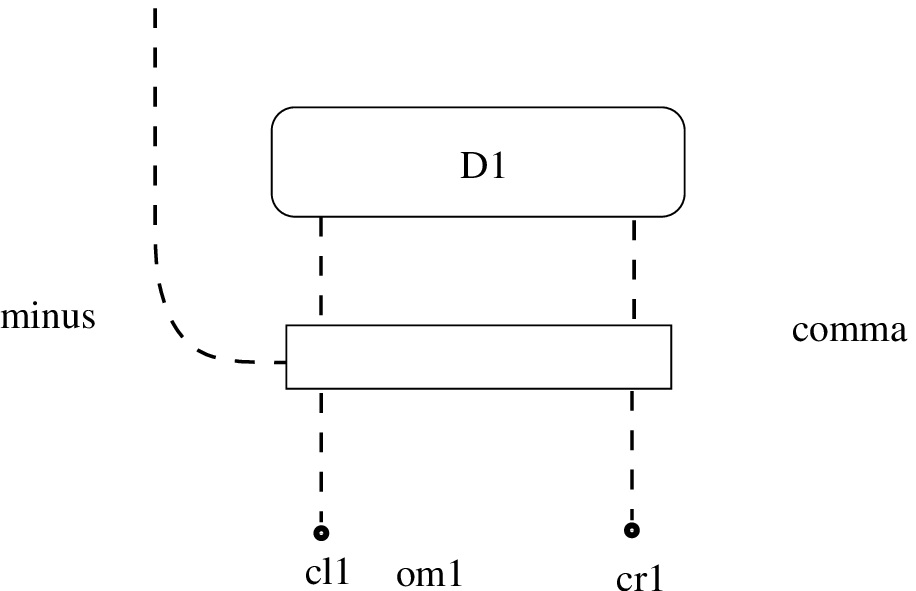}
\adjustrelabel <-0.15cm,0.15cm> {om1}{\scriptsize $<\cdots<$}
\adjustrelabel <0cm,-0cm> {cl1}{\scriptsize $x_1$}
\adjustrelabel <0cm,-0cm> {cr1}{\scriptsize $x_r$}
\adjustrelabel <0cm,-0.05cm> {D1}{$D$}
\adjustrelabel <0cm,-0cm> {minus}{$-$}
\adjustrelabel <0cm,-0cm> {comma}{,}
\endrelabelbox} \end{array}
$$
where the ``box'' notation is recalled in Figure \ref{fig:box}.
Then, this box can be slided upwards (see Figure \ref{fig:slide_box}) which shows that $d=0$.
\end{proof}

\begin{figure}[h]
\centerline{\relabelbox \small
\epsfxsize 5truein \epsfbox{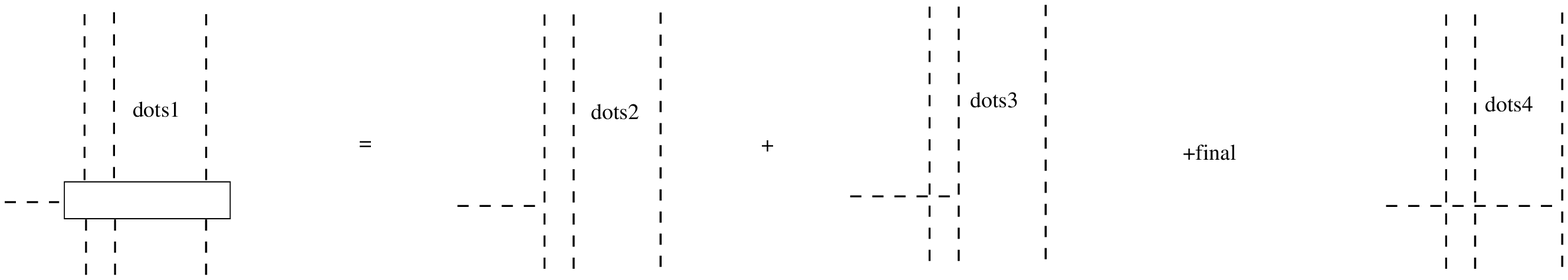}
\adjustrelabel <-0cm,-0.cm> {dots1}{$\dots$}
\adjustrelabel <-0cm,-0.cm> {dots2}{$\dots$}
\adjustrelabel <-0cm,-0.cm> {dots3}{$\dots$}
\adjustrelabel <0.05cm,-0.cm> {dots4}{$\dots$}
\adjustrelabel <-0cm,-0.cm> {=}{$:=$}
\adjustrelabel <-0cm,-0.cm> {+}{$+$}
\adjustrelabel <-0cm,-0.cm> {+final}{$+\cdots +$}
\endrelabelbox}
\caption{The box notation.}
\label{fig:box}
\end{figure}

\begin{figure}[h]
\centerline{\relabelbox \small
\epsfxsize 4truein \epsfbox{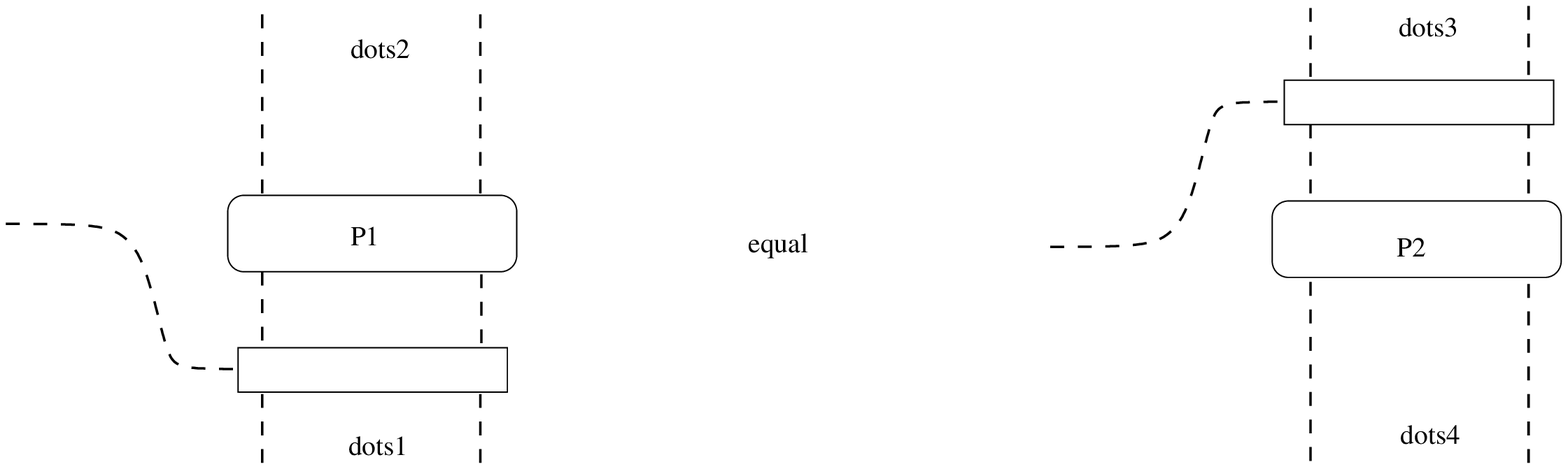}
\adjustrelabel <-0cm,-0.cm> {dots1}{$\dots$}
\adjustrelabel <-0cm,-0.cm> {dots2}{$\dots$}
\adjustrelabel <-0cm,-0.cm> {dots3}{$\dots$}
\adjustrelabel <0.0cm,-0.cm> {dots4}{$\dots$}
\adjustrelabel <-0cm,-0.cm> {equal}{$=$}
\adjustrelabel <-0cm,-0.cm> {P1}{$P$}
\adjustrelabel <-0cm,-0.cm> {P2}{$P$}
\endrelabelbox}
\caption{As a consequence of the AS and IHX relations, 
a box can be ``slided'' over a part $P$ of a Jacobi diagram \emph{without} external vertices.
(See for instance \cite{DKC}.)}
\label{fig:slide_box}
\end{figure}

\begin{lemma}
  \label{lem:symplectic_action}
  $\SpHQ$ acts trivially on each $\omega $-vertex in a \og-diagram.  That
  is, if $D$ is an \og-diagram with some $\omega $-vertices and
  other vertices $u_1,\ldots ,u_l$ labeled by $c_1,\ldots ,c_l\in \HQ$
  respectively, then, for each $F \in \SpHQ$, $F \cdot D\in \Ag{}$ is represented
  by the diagram obtained from $D$ by changing the labels for
  $u_1,\ldots ,u_l$ with $F(c_1),\ldots ,F(c_l)$ respectively and by leaving the $\omega$-vertices unchanged.
\end{lemma}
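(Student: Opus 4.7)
The plan is to reduce the lemma to a basis-independence statement for the $\omega$-expansion in $\Ag{}$, and then to verify this independence using the $\omega$-vertex STU-relations from Lemma~\ref{lem:STU-omega}.

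First, by $\Q$-linearity of the $\SpHQ$-action on $\Ag{}$ and by induction on the number of $\omega$-vertices of $D$, I will reduce to the case where $D$ contains exactly one $\omega$-vertex. The commutation identity (Lemma~\ref{lem:commutation}) will let me slide this $\omega$-vertex to any preferred position in the ordering of external vertices, so I may assume it occupies, say, the smallest position, thereby localizing the problem to a fixed neighborhood.

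The $\SpHQ$-action on $\Ag{}$ acts by relabelling each external vertex colored by $c\in\HQ$ with $F(c)$. When the $\omega$-vertex is expanded via~(\ref{eq:split_omega}), this action sends each summand pair $(\alpha_i,\beta_i)$ to $(F\alpha_i,F\beta_i)$, so the claim reduces to the identity
\begin{equation*}
\sum_{i=1}^g D_{\alpha_i,\beta_i}\ =\ \sum_{i=1}^g D_{F\alpha_i,F\beta_i}\quad\text{in $\Ag{}$},
\end{equation*}
where $D_{x,y}$ denotes the Jacobi diagram obtained from $D$ by replacing its $\omega$-vertex with two adjacent external vertices labeled $x$ and $y$ (the non-$\omega$ labels already being $F(c_j)$ on both sides). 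Since $(F\alpha_i,F\beta_i)_{i=1,\dots,g}$ is again a symplectic basis of $\HQ$, this identity asserts exactly the basis-independence of the $\omega$-expansion of $D$.

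To establish basis-independence, I plan to use that $\SpHQ$ is generated by the symplectic transvections $T_\gamma\colon x\mapsto x+\omega(x,\gamma)\gamma$ for $\gamma\in\HQ$, so that it suffices to verify invariance under each $T_\gamma$. Expanding $D_{T_\gamma\alpha_i,T_\gamma\beta_i}$ by $\Q$-bilinearity of $(x,y)\mapsto D_{x,y}$ and summing over $i$, the difference $\sum_i(D_{T_\gamma\alpha_i,T_\gamma\beta_i}-D_{\alpha_i,\beta_i})$ reorganizes, by means of the decomposition $\gamma=\sum_i[\omega(\gamma,\beta_i)\alpha_i-\omega(\gamma,\alpha_i)\beta_i]$ valid in any symplectic basis, into a combination of diagrams in which the label $\gamma$ occurs at one or both of the two $\omega$-expansion positions. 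The standard STU-like relation will then handle the single-$\gamma$ contributions by commuting $\gamma$ past the remaining label and collecting the resulting glued-vertex terms, while the $\omega$-vertex STU-relations of Lemma~\ref{lem:STU-omega} will rewrite the quadratic contribution $D_{\gamma,\gamma}$ in terms of a glued-vertex diagram matching, with the correct sign, the leftover pieces from the standard STU-like corrections.

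The hard part will be the combinatorial bookkeeping of the various glued-vertex corrections produced by standard STU-like and by the $\omega$-vertex STU-relations: their signs and multiplicities must align for the final cancellation to work out, and the diagrammatic identities of Lemma~\ref{lem:STU-omega} are designed precisely to make this matching succeed. Once the check is completed for a single transvection, the lemma follows for all $F\in\SpHQ$ by composition, and then for all $D$ by the reductions above.
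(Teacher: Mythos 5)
Your reduction is sound---linearity reduces to the case of one $\omega$-vertex, and the whole content of the lemma is indeed the basis-independence identity $\sum_i D_{\alpha_i,\beta_i}=\sum_i D_{F\alpha_i,F\beta_i}$---but from that point you take a long detour where the paper takes a single step. The paper's proof is the one observation that $F\cdot\omega=\omega\in\Lambda^2\HQ$ for every $F\in\SpHQ$: the multilinearity relation makes $(x,y)\mapsto D_{x,y}$ bilinear, and the AS relation at the newly created trivalent vertex makes it antisymmetric (the STU-like correction produced when swapping the two adjacent legs is a diagram with a looped edge, hence zero by AS), so the expansion of the $\omega$-vertex depends only on the bivector $\sum_i x_i\wedge y_i\in\Lambda^2\HQ$, which is the same for every symplectic basis. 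Your transvection-by-transvection verification would also succeed, but the ``hard part'' you anticipate is empty: expanding $\sum_i D_{T_\gamma\alpha_i,T_\gamma\beta_i}$ bilinearly, the quadratic term is a multiple of $D_{\gamma,\gamma}=0$, and the linear terms assemble, via $\sum_i\bigl[\omega(\gamma,\beta_i)\alpha_i-\omega(\gamma,\alpha_i)\beta_i\bigr]=\gamma$ and antisymmetry, into $D_{-\gamma,\gamma}=0$; no glued-vertex bookkeeping survives, and Lemma~\ref{lem:STU-omega} (which governs commuting an $\omega$-vertex past \emph{other} external vertices) plays no role here. Likewise the preliminary slide of the $\omega$-vertex to the smallest position via Lemma~\ref{lem:commutation} is unnecessary, since the argument is local at the $\omega$-vertex. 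In short: correct skeleton, but the decisive fact is the $\Sp$-invariance of the symplectic bivector, not any diagrammatic relation, and as written your proposal leaves the key cancellation unverified.
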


\begin{proof}
This is immediate from the fact that $F \cdot \omega =\omega \in \Lambda^2 \HQ$ for all $F \in \SpHQ$.
\end{proof}

An \og-diagram is called  an {\em $\omega $-smallest diagram} if the
smallest external vertex is an $\omega $-vertex.
Let $I^<\subset \Ag{}$ denote the (homogeneous) subspace spanned by
$\omega $-smallest diagrams.

\begin{proposition}
  \label{prop:Hopf_ideal}
  $I^<$ is a Hopf ideal in $\Ag{}$, closed under the $\SpHQ$-action.
\end{proposition}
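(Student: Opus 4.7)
The plan is to first establish a cleaner description of $I^<$ using the commutation identity, and then check the four required properties (two-sided ideal, coideal, antipode-stable, $\SpHQ$-stable), each of which will become essentially a tautology once the reformulation is in place.

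The key first step is to use Lemma \ref{lem:commutation} to observe that any $\omega$-vertex in an \og-diagram can be commuted past the block of preceding external vertices, with equality in $\Ag{}$. Iterating, every \og-diagram with at least one $\omega$-vertex is equal in $\Ag{}$ to an $\omega$-smallest diagram (obtained by sliding one of its $\omega$-vertices to the smallest position). Hence $I^<$ coincides with the subspace of $\Ag{}$ spanned by \emph{all} \og-diagrams that contain at least one $\omega$-vertex. Recognizing this reformulation is the main (and really the only) conceptual obstacle; after it, having an $\omega$-vertex is a manifestly stable property under all the operations in question.

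From this description, the ideal property is immediate: for basis diagrams $D \in I^<$ and $E \in \Ag{}$, both $D \osqcup E$ and $E \osqcup D$ inherit the $\omega$-vertices of $D$, hence lie in $I^<$. For the coideal property, $\Delta(D) = \sum_{D=D'\sqcup D''} D' \otimes D''$ distributes the connected components of $D$; any component containing an $\omega$-vertex goes entirely to one side, so in each summand at least one of $D',D''$ lies in $I^<$. The counit $\varepsilon$ vanishes on $I^<$ since every element of $I^<$ is non-empty. For the antipode, decomposing $D = D_1 \osqcup \cdots \osqcup D_k$ into connected non-empty components gives $S(D) = (-1)^k\, D_k \osqcup \cdots \osqcup D_1$, which has the same $\omega$-vertices as $D$.

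Finally, the $\SpHQ$-stability is precisely Lemma \ref{lem:symplectic_action}: the action of $\SpHQ$ relabels only the $\HQ$-colored external vertices and fixes $\omega$-vertices, so it preserves the property of containing an $\omega$-vertex, hence preserves $I^<$. The only work in the argument is the opening observation; everything else is direct bookkeeping from the definitions of $\osqcup$, $\Delta$, $\varepsilon$, $S$ and the $\SpHQ$-action on $\Ag{}$.
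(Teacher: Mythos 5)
Your opening reformulation of $I^<$ is where the argument breaks down, and it is not a repairable shortcut: the claim is false. Lemma \ref{lem:commutation} does not let you slide an $\omega$-vertex past an arbitrary initial segment of the total order; it only lets it jump over a contiguous block consisting of \emph{all} the external vertices of a self-contained sub-diagram $D$ (a union of connected components). This restriction is forced by the proof: the accumulated STU corrections are packaged into a box which is then slid off the top of $D$ (Figure \ref{fig:slide_box}), and this requires that no edge of $D$ leave the block. Commuting an $\omega$-vertex past a \emph{single} $\HQ$-labeled vertex produces a genuinely nonzero correction term --- that is precisely the content of the first identity of Lemma \ref{lem:STU-omega}. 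Since the vertices preceding a given $\omega$-vertex need not form the external-vertex set of a union of components (they may include some but not all legs of the $\omega$-vertex's own component), you cannot iterate the commutation to bring an arbitrary $\omega$-vertex to the smallest position. Concretely, for the tripod with legs $x$, $\omega$, $y$, the difference between the orderings $x<\omega<y$ and $\omega<x<y$ is a nonzero multiple of a diagram of internal degree $2$ and loop degree $1$; such diagrams span the summand isomorphic to $S^2\HQ\simeq\Gamma_{2\omega_1}$ of $\A^{<,c}_2$, which meets $I^<$ trivially. Hence the ordering $x<\omega<y$ does \emph{not} lie in $I^<$, and the span of all \og-diagrams containing an $\omega$-vertex strictly contains $I^<$. (This is corroborated by Proposition \ref{prop:I_symmetrized}: the generators of $\chi^{-1}(I^<)$ carry explicit correction terms exactly because the uncorrected $\omega$-decorated diagrams are not in the ideal.) Everything you deduce after the reformulation is therefore a verification for the wrong subspace.

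The correct route is to work with the actual definition throughout, which is what the paper does. That $I^<$ is a right ideal is immediate, since right multiplication only appends larger vertices. That it is a left ideal is exactly the situation where Lemma \ref{lem:commutation} \emph{does} apply: in $E\osqcup D$ with $D$ $\omega$-smallest, the block to be traversed is all of $E$, a union of connected components. For the coideal property one notes that the component carrying the smallest (hence $\omega$-) vertex of an $\omega$-smallest diagram lands entirely in $D'$ or in $D''$, and that vertex remains smallest in the induced order on that factor; the counit clearly vanishes, and the antipode check again invokes the commutation lemma to return the reversed product of components to $\omega$-smallest form. Your final step, the $\SpHQ$-stability via Lemma \ref{lem:symplectic_action}, is correct as stated and agrees with the paper.
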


\begin{proof}
  It is clear that $I^<$ is a right ideal in the algebra $\Ag{}$.
  Lemma \ref{lem:commutation} implies that $I^<$ is a left ideal in $\Ag{}$ as well.
  Thus $I^<$ is a two-sided ideal.  It is easy to check that $I^<$ is
  a Hopf ideal.  By Lemma \ref{lem:symplectic_action}, $I^<$ is closed under
  the $\SpHQ$-action.
\end{proof}

Therefore, the quotient $\Ag{}/I^<$ is a graded Hopf algebra with $\SpHQ$-action.
It also follows from Proposition \ref{prop:Hopf_ideal} that
\begin{gather*}
  I^{<,c}:=I^<\cap \Agc{}
\end{gather*}
is an ideal in the Lie algebra $\Agc{}$.
Using the fact that $\Ag{}=U(\Agc{})$, it can be seen that 
the subspace $I^{<,c}$ is spanned by connected $\omega $-smallest diagrams.

\subsection{Diagrammatic description of $\Gr^Y \Cg \oQ$}

We can now state the main result of this section, 
which is the analogue of Theorem \ref{th:surgery-LMO} in the closed surface case.
To relate the bordered surface case to the closed surface case,
we fix an embedding 
\begin{equation}
\label{eq:surface_embedding}
\bfi \colon\thinspace \sgg\hookrightarrow\Sigma _g,
\end{equation}
which induces a surjective homomorphism of graded Lie algebras with $\SpHQ$-action:
\begin{gather*}
  \Gr \bfi \oQ \colon\thinspace \Gr^Y\Cgg\oQ \longrightarrow \Gr^Y\Cg\oQ.
\end{gather*}

\begin{theorem}
  \label{th:surgery-LMO_closed}
  Let $g\ge0$.  There exist mutually inverse isomorphisms of graded Lie
  algebras with $\SpHQ$-action
  \begin{gather}
    \A^{<,c}\sHQ/I^{<,c}\sHQ \overset{\psi}{\underset{\LMO}{\longrightleftarrows}}
    \Gr^Y\Cg\oQ, 
  \end{gather}
  such that the following diagram is commutative:
  \begin{equation}
    \label{eq:bordered_to_closed}    
    \xymatrix{
      {\Agc{}} \ar[d]^-{\psi}_-\simeq \ar@{->>}[rr]^{\operatorname{proj}}
      && {\Agc{}/I^{<,c}}   \ar[d]_-{\psi}^-\simeq\\
	 {\Gr^Y\Cgg \oQ} \ar@{->>}[rr]^{\Gr \bfi \otimes \Q} 	 \ar@/^1pc/[u]^-{\LMO}
	 && {\Gr^Y\Cg\oQ}.  \ar@/_1pc/[u]_-{\LMO}
    }
  \end{equation}
\end{theorem}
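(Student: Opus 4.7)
The plan is to exhibit $\Gr^Y\Cg\oQ$ as a quotient of $\Gr^Y\Cgg\oQ$ by identifying the kernel of $\Gr\bfi\oQ$ with $\psi(I^{<,c})$ under the isomorphism of Theorem~\ref{th:surgery-LMO}. The four things to establish are: (i) surjectivity of $\Gr\bfi\oQ$; (ii) the inclusion $\psi(I^{<,c})\subset\Ker(\Gr\bfi\oQ)$; (iii) the reverse inclusion (equivalently, injectivity of the induced map); and (iv) $\SpHQ$-equivariance plus commutativity of diagram \eqref{eq:bordered_to_closed}.

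Step (i) is geometric and essentially immediate: since $\Sigma_g\setminus\bfi(\operatorname{int}\sgg)\cong D^2$, the trivial cylinder $\Sigma_g\times[-1,1]$ differs from $\bfi(\sgg\times[-1,1])$ by a $3$-ball. Given any graph clasper $C\subset\Sigma_g\times[-1,1]$, a general position argument pushes $C$ off the ball $D^2\times[-1,1]$, realizing any class in $\Gr^Y\Cg\oQ$ as $\Gr\bfi\oQ$ of a class in $\Gr^Y\Cgg\oQ$.

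Step (ii) is the main geometric content, and this is where the ideal $I^{<,c}$ is justified. Let $D$ be a connected \og-diagram whose smallest external vertex is an $\omega$-vertex, representing a generator of $I^{<,c}$. Upon expansion via \eqref{eq:split_omega}, $\psi(D)$ is a surgery along a clasper $C\subset\sgg\times[-1,1]$ in which the two smallest annuli (corresponding to the $\omega$-vertex) have cores representing $\alpha_i$ and $\beta_i$, summed over $i$. The choice of ordering places these two annuli at the bottom of the cylinder, hence inside a collar of $\sgg\times\{-1\}$. The idea is to push these two annuli through the capping disk $D^2\times[-1,1]\subset\Sigma_g\times[-1,1]$ and use clasper calculus (specifically, the move corresponding to killing the leaf of a clasper whose core bounds in the ambient manifold) to show that the surgery is $Y_{i+1}$-equivalent to the trivial cylinder. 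The diagrammatic shadow of this move is precisely the STU-relation for an $\omega$-vertex (Lemma~\ref{lem:STU-omega}) together with the splitting $\omega=\sum_i\alpha_i\wedge\beta_i$; the fact that the $\omega$-vertex is smallest is what guarantees that the two annuli can be cleanly slid into the capping disk without being obstructed by other strands of the clasper. I expect this to be the most delicate step of the proof, as it requires a careful ``cap-off'' argument in clasper calculus and it is essentially where the ordering on external vertices (apparent in the structure of $I^{<,c}$) enters geometrically.

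Step (iii) is most cleanly handled by producing an inverse $\LMO\colon\Gr^Y\Cg\oQ\to\Agc{}/I^{<,c}$, as indicated by diagram \eqref{eq:bordered_to_closed}. Given a homology cylinder $M$ over $\Sigma_g$, remove an open disk from $\Sigma_g$ to get $\sgg$, thereby producing a homology cylinder $M^\circ$ over $\sgg$ (well-defined up to an ambiguity supported in a small ball), and set $\LMO([M]):=[\LMO(M^\circ)]\in\Agc{}/I^{<,c}$. The core technical point is to verify that the ambiguity in the choice of disk produces elements of $I^{<,c}$ in $\Agc{}$: this reduces, via the LMO functor of \cite{CHM} and its universality, to the computation that the LMO invariant of the cobordism obtained by gluing $D^2\times[-1,1]$ on the other side of $\sgg$ (via a twist near $\partial\sgg$) differs from the identity by diagrams whose smallest external vertex is $\omega$-labeled. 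Given this, $\LMO\circ\bar\psi=\Id$ on generators by construction, so $\bar\psi$ is injective.

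Finally, $\SpHQ$-equivariance follows from Lemmas~\ref{lem:action} and~\ref{lem:equivariance_LMO} together with Proposition~\ref{prop:Hopf_ideal} (which asserts that $I^<$ is $\SpHQ$-stable), and the commutativity of \eqref{eq:bordered_to_closed} is built into the construction of $\bar\psi$ as the factorization of $\Gr\bfi\oQ\circ\psi$ through $\Agc{}/I^{<,c}$. The main obstacle, as noted, is step (ii): carrying out the explicit clasper-calculus reduction showing that $\omega$-smallest diagrams are killed under extension to the closed surface.
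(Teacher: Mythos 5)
Your proposal follows the same architecture as the paper's proof: descend $\psi$ by showing $(\Gr\bfi\oQ)\circ\psi$ kills $I^{<,c}$ (the paper's Lemma~\ref{lem:surgery_closed}), and descend $\LMO$ by showing the bordered LMO homomorphism sends the ambiguity of the bordered lift into $I^<$ (the paper's Lemma~\ref{lem:I}, which rests on the fact from \cite{CHM} that the LMO functor respects the congruence on Lagrangian cobordisms defined by the ideal $\mathcal{I}$ of $\tsA$ --- exactly the computation you describe with the capping cobordism). Mutual inverseness, equivariance and the commutativity of \eqref{eq:bordered_to_closed} then come for free from the bordered case, as you say.

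There is, however, one step in your (ii) that would fail if executed literally. You propose to expand the $\omega$-vertex first into the sum over $i$ of claspers with leaves representing $\alpha_i$ and $\beta_i$, and then to ``push these two annuli through the capping disk and kill the leaf whose core bounds.'' But the cores $\alpha_i$ and $\beta_i$ are essential in $\Sigma_g$; none of these leaves bounds a disk after capping off, so the null-leaf move does not apply to any individual term of the sum. The paper's argument runs in the opposite direction: the $\omega$-vertex is first realized \emph{topologically} as a single leaf $L_v$ parallel to $\partial\sgg\times\{t\}$; in $\Sigma_g\times[-1,1]$ this boundary-parallel curve bounds a disk disjoint from the rest of the clasper, so that surgery is trivial there. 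Separately --- and this is where the $\omega$-smallest hypothesis is used, to slide the level surface bounded by $L_v$ off the edges of the clasper --- one trades $L_v$ for a box with $g$ inputs and applies the zip construction to show that this single-leaf surgery is $Y_{i+1}$-equivalent to the disjoint union of the $g$ claspers realizing $\sum_i D_i$, i.e.\ to $\psi$ of the expanded diagram. Only the assembled sum dies under $\Gr\bfi\oQ$, not its individual summands. With the reduction reoriented in this way, your outline matches the paper's proof.
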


\noindent
The proof of Theorem \ref{th:surgery-LMO_closed} is given 
in \S~\ref{subsec:surgery_closed} and \S~\ref{subsec:LMO_closed}.

\begin{remark}
The definition of the surgery map 
$\psi\colon\thinspace \Agc{}/I^{<,c}\longrightarrow\Gr^Y \Cg \otimes \Q$
is suggested in \cite{Habiro}, where it is also conjectured to be an isomorphism.
The degree $1$ case is done in \cite{MM}.
\end{remark}

\subsection{The surgery map $\psi$}

\label{subsec:surgery_closed}

Let $\psi: \A^{<,c} \to \Gr^Y \Cgg \otimes \Q$ be the surgery map defined 
in \S~\ref{subsec:surgery} in the case of the bordered surface $\sgg$. 

\begin{lemma}
\label{lem:surgery_closed}
The map $(\Gr \bfi \otimes \Q) \circ \psi$ vanishes on the subspace $I^{<,c}$.
\end{lemma}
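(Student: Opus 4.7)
The plan is to show that for each connected $\omega $-smallest Jacobi diagram $D$ of internal degree $i$, the homology-cylinder class $(\Gr_i \bfi \otimes \Q)(\psi(D))$ vanishes in $\Gr^Y_i \Cg \otimes \Q$. First, I will use the $\omega $-expansion rule \eqref{eq:split_omega} inside $\A^{<}\sHQ$ to write $D = \sum_{k=1}^{g} D_k$, where each $D_k \in \A^{<,c}\sHQ$ is a connected Jacobi diagram having no $\omega $-vertices and whose two smallest external vertices (coming from the former $\omega $-vertex of $D$) are labeled $\alpha_k$ and $\beta_k$ respectively. By linearity of $\psi$, the lemma reduces to showing $\sum_{k=1}^{g} (\Gr_i \bfi \otimes \Q)(\psi(D_k)) = 0$.

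Geometrically, $\psi(D_k)$ is represented by the surgery $(\sgg \times [-1,1])_{C(D_k)}$, where the two bottom-most annular leaves of the graph clasper $C(D_k)$ have cores representing $\alpha_k$ and $\beta_k$ in $\HQ$; both are joined via bands to a common internal-vertex disk, namely the one incident to the edge that originally emanated from the deleted $\omega $-vertex. Applying $\Gr \bfi$ amounts to viewing this same clasper inside $\Sigma_g \times [-1,1] = (\sgg \times [-1,1]) \cup_{\partial \sgg \times [-1,1]} (D^2 \times [-1,1])$. The key observation is that summing over $k$ geometrizes the symplectic bivector $\omega = \sum_k \alpha_k \wedge \beta_k \in \Lambda^2 \HQ$, so I must show that this ``$\omega $-leaf pair'' attached to the rest of the clasper gives trivial surgery modulo $Y_{i+1}$ in $\Sigma_g \times [-1,1]$.

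The main obstacle is to exhibit an explicit clasper-calculus move, valid in $\Sigma_g \times [-1,1]$ but not in $\sgg \times [-1,1]$, that trivializes this $\omega $-leaf pair. The geometric mechanism I expect is the following: the cap $D^2 \times [-1,1]$ provides the extra room needed to isotope the pair of bottom leaves and, using a zip-type operation \cite{Habiro,Goussarov_clovers,GGP}, to combine them (after summation over $k$) into a single leaf that is null-homologous in $\Sigma_g$; surgery along a graph clasper with a null-homologous leaf is $Y_{i+1}$-trivial, yielding the vanishing. An alternative route is to pass through the LMO functor of \cite{CHM}, using the expected compatibility of the LMO invariant under capping; however, this compatibility is essentially equivalent to the statement being proved, so it does not provide a shortcut.
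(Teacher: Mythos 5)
Your reduction is the same as the paper's: expand the $\omega$-vertex by the rule \eqref{eq:split_omega} to write $D=\sum_k D_k$ and reduce to showing $\sum_k (\Gr\bfi\oQ)\psi(D_k)=0$, recognizing that this sum is realized by a single graph clasper via zip/leaf-splitting moves. But your final step --- ``combine them into a single leaf that is null-homologous in $\Sigma_g$; surgery along a graph clasper with a null-homologous leaf is $Y_{i+1}$-trivial'' --- is a genuine gap. The criterion is false as stated: a leaf that is merely null-homologous in the ambient manifold need not give $Y_{i+1}$-trivial surgery (think of a $Y$-clasper in $S^3$ whose three leaves form a Borromean configuration: every leaf is null-homologous, yet the surgery is not $Y_2$-trivial). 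What is needed is a leaf bounding an embedded disk disjoint from the rest of the clasper. Worse, your criterion proves too much: the combined leaf is isotopic to $\partial\sgg$, which is already null-homologous in $\sgg$ itself (it is a product of commutators of the $\alpha_j,\beta_j$), so the same argument would show $\psi(I^{<,c})=0$ already in $\Gr^Y\Cgg\oQ$, contradicting the injectivity of $\psi$ in Theorem \ref{th:surgery-LMO} since $I^{<,c}\neq 0$. The feature that genuinely distinguishes the closed case is not a homological condition but the fact that $\partial\sgg$ bounds an embedded disk inside the cap $D^2\times[-1,1]$, disjoint from the whole clasper, so the capped-off surgery is honestly trivial; this is exactly the paper's \eqref{e23}.

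The paper runs your mechanism in the safe direction: it first realizes the $\omega$-vertex of $D$ as a leaf $L_v$ going along $\partial\sgg\times\{t\}$, notes that after applying $\bfi$ this leaf bounds a disk in the cap so that $(\sg\times[-1,1])_{\bfi(C)}\cong\sg\times[-1,1]$, and only then splits $L_v$ --- trading it for a box with $g$ inputs, each attached to a node with leaves $\alpha_j,\beta_j$, and applying the zip construction --- to identify the class of that surgery with $\pm\,\psi(\sum_j D_j)$ modulo $Y_{i+1}$. If you insist on your direction, you must also justify the recombination: each $C_k$ carries a node with two leaves $\alpha_k$ and $\beta_k$, not a single leaf, so you first have to trade each such node for a single leaf representing the commutator $[\alpha_k,\beta_k]$ and then band-sum the $g$ resulting leaves into one boundary-parallel leaf, controlling the $Y_{i+1}$ error terms at every step; this is precisely the content you left to ``an explicit clasper-calculus move''.
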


\noindent
Consequently, the surgery map for $\sgg$ induces a \emph{surgery map} for $\sg$
\begin{gather*}
  \psi\colon\thinspace \Agc{}/I^{<,c} \longrightarrow \Gr^Y\Cg\oQ,
\end{gather*}
such that $\psi \circ \operatorname{proj} = (\Gr \bfi \otimes \Q) \circ \psi$.

\begin{proof}[Proof of Lemma \ref{lem:surgery_closed}]
  Let $D$ be a connected $\omega $-smallest diagram of degree $i$.
  By the rule (\ref{eq:split_omega}), we can assume that
  $D$ has only one $\omega$-vertex $v$, and 
  it suffices to show that $(\Gr \bfi \otimes \Q) \circ \psi (D)=0$.  
  Let $C$ be a connected graph clasper in $\sgg\times [-1,1]$, which 
  realizes ``topologically'' $D$ in the sense of \S~\ref{subsec:surgery}, 
  except that the leaf $L_v$ corresponding to $v$ goes along 
  $\partial \sgg\times \{t\}$, where $t\in (-1,1)$.
  
  The graph clasper $C$ can be regarded in the product 
  $\sg\times [-1,1]$ via the inclusion $\bfi: \sgg \hookrightarrow \sg$.
  There, the leaf $L_v$ bounds a disk whose interior does not intersect $C$.
  Therefore, we have
  \begin{gather}
    \label{e23}
    (\Sigma _g\times [-1,1])_{\bfi(C)}\cong \Sigma _g\times [-1,1].
  \end{gather}

  Since $v$ is the smallest external vertex, 
  one can arrange by an isotopy of $C$ that the level surface $\sgg\times \{t\}$, 
  bounded by the leaf $L_v$, does not intersect the edges of $C$. 
  So, by clasper calculus \cite{Habiro}, the graph clasper $C$ can be
  transformed to a clasper $C'$  by trading the leaf $L_v$ 
  for a box $B$ with $g$ input edges $e_1,\ldots ,e_g$ such that
  each $e_j$ is connected to a node $w_j$, which is itself
  connected by edges to two leaves representing the isotopy classes
  of the curves $\alpha_j$ and $\beta _j$, respectively.
  Recall that $(\alpha_1,\beta_1,\dots,\alpha_g,\beta_g)$ is a system of 
  meridians and parallels for the surface $\Sigma_{g,1}$, as shown on Figure \ref{fig:surface}.
 
  Next, by using the zip construction, we see that surgery along $C'$ is
  $Y_{i+1}$-equivalent to surgery along the disjoint union of $g$ graph claspers
  $C_1,\ldots ,C_g$, where $C_j$ is obtained from $C$ by replacing the leaf
  $L_v$ with a node connected by edges to two leaves
  representing $\alpha_j$ and $\beta_j$, respectively.

  Let $D_1,\ldots ,D_g$ denote the \og-diagrams obtained from $D$ by replacing
  $v$ with an internal vertex connected to two external vertices
  labeled by $\alpha _i$ and $\beta _i$, respectively, the former being declared
  smaller than the latter. Then we have
  \begin{gather*}
    \psi (\sum_{j=1}^gD_j) = \pm \sum_{j=1}^g
    \left\{ (\sgg\times [-1,1])_{C_j} \right\}_{Y_{i+1}}
    = \pm \left\{ (\sgg\times [-1,1])_{C'} \right\}_{Y_{i+1}},
  \end{gather*}
  where the second identity is proved by clasper calculus.
  By \eqref{e23}, $\Gr \bfi \oQ$ maps the right-hand side to $0$.
  Hence we have
  \begin{gather*}
    (\Gr \bfi \otimes \Q) \circ \psi(D) = (\Gr \bfi \otimes \Q) \circ \psi(\sum_{i=1}^gD_i)=0.
  \end{gather*}
\end{proof}

\subsection{The LMO map}

\label{subsec:LMO_closed}

In this subsection, we construct a Lie algebra homomorphism
\begin{equation}
  \label{eq:graded_LMO_closed}
  \LMO \colon\thinspace \Gr^Y\Cg\oQ \longrightarrow  \Agc{}/I^{<,c}.
\end{equation}
It is induced by the map $\LMO: \Gr^Y\Cgg\oQ \longrightarrow  \Agc{}$ of \S~\ref{subsec:LMO},
in the sense that $\LMO \circ (\Gr \bfi \oQ) = \operatorname{proj} \circ \LMO$.  
This will complete the proof of Theorem \ref{th:surgery-LMO_closed}.

To do this, it is enough to show that the monoid homomorphism
$\LMO= s \circ \varphi \circ \ZtildeY: \Cgg \to \A^<$ introduced in \S~\ref{subsec:LMO} 
induces a monoid homomorphism
\begin{equation}
\label{eq:LMO_closed}
\LMO: \Cg \longrightarrow \A^{<}/I^<.
\end{equation}

We start by recalling from \cite{CHM} how the maps $\ZtildeY$ and $\varphi$ are defined.
First of all,  a LMO functor
\begin{gather*}
  \Ztilde\colon\thinspace \LCob \longrightarrow \tsA
\end{gather*}
is defined from the category $\LCob$ of ``Lagrangian cobordisms''
to the category $\tsA$ of ``top-substantial Jacobi diagrams''.
The objects of  $\LCob$ are integers $g\geq 0$ 
which index the surfaces $\Sigma_{g,1}$ by their genus\footnote{
To be exact, one has to choose a parenthesizing of 
the handles of the surface $\sgg$ for each $g\geq 0$.}, 
while morphisms in $\LCob(g,h)$ are cobordisms from $\Sigma_{g,1}$ to $\Sigma_{h,1}$
satisfying certain homological conditions. As for the category $\tsA$, it has non-negative integers as objects.
For $g,h\ge 0$, $\tsA(g,f)$ is (the degree completion of) a $\Q $-vector space
spanned by Jacobi diagrams with external vertices labelled by
$\set{g}^+:=\{1^+,\ldots ,g^+\}$ or $\set{f}^-:=\{1^-,\ldots ,f^-\}$ 
such that each strut component, if any,
does {\em not} connect an $i^+$ with a $j^+$, for some $1\le i,j\le g$.
The identity of $g\geq 0$ in the category $\tsA$ 
is the following exponential of struts:
$$
\Id_g = \exp_\sqcup  \left(\sum_{i=1}^g\strutgraph{i^-}{i^+}\right).
$$

Since $\LCob(g,g)$ contains the monoid $\Cgg$ of homology
cylinders, $\Ztilde$ restricts to an invariant of homology cylinders.
Let $\AY\left(\set{g}^+ \cup \set{g}^-\right)$ be the
space of Jacobi diagrams with external vertices colored by $\set{g}^+ \cup \set{g}^-$
and without strut component, modulo the AS, IHX and multilinearity relations.
It is shown in \cite{CHM} that, for all $M\in \Cgg$, $\Ztilde(M)\in \tsA(g,g)$ splits as
$$
\Ztilde(M) = \Id_g \sqcup \ZtildeY(M)
$$
where 
$$
\ZtildeY(M) \in \AY\left(\set{g}^+ \cup \set{g}^-\right)
$$
denotes the $Y$-reduction of $\Ztilde(M)$. Thus, we obtain a monoid homomorphism
\begin{equation}
\label{eq:LMO_invariant}
\ZtildeY: \Cgg \longrightarrow \AY\left(\set{g}^+ \cup \set{g}^-\right),
\end{equation}
where the space $\AY\left(\set{g}^+ \cup \set{g}^-\right)$
is equipped with the multiplication $\star$ defined by
$$
D \star E :=
\left( \begin{array}{c}
\hbox{sum of all ways of gluing \emph{some} of the $i^+$-colored vertices of $D$}\\
\hbox{to \emph{some} of the $i^-$-colored vertices of $E$, for all $i=1,\dots,g$}
\end{array} \right) .
$$ 
This product was introduced in \cite{GL_tree-level}.

As for the graded algebra isomorphism 
$$
\varphi: \left(\AY\left(\set{g}^+ \cup \set{g}^-\right),\star\right) 
\longrightarrow \left(\A^<(-H_\Q), \osqcup\right),
$$ 
it is defined by declaring that ``each $i^-$-colored vertex should be
smaller than any $i^+$-colored vertex'' and by changing the colors of
external vertices according to the rules $(i^- \mapsto \alpha_i)$ and
$(i^+ \mapsto \beta_i)$. See \cite{CHM}.\\

Instead of considering cobordisms between surfaces with one boundary component, 
one can consider cobordisms between closed surfaces.
We defined in \cite{CHM} a congruence relation $\sim$ on $\LCob$
such that the quotient category $\LCob/\!\!\sim$ gives the category of 
``Lagrangian cobordisms'' between closed surfaces.

For $g\ge 0$, let  $a_g\in \tsA(g+1,g)$ be defined by\\[0.1cm]

\centerline{\relabelbox \small
\epsfxsize 3.5truein \epsfbox{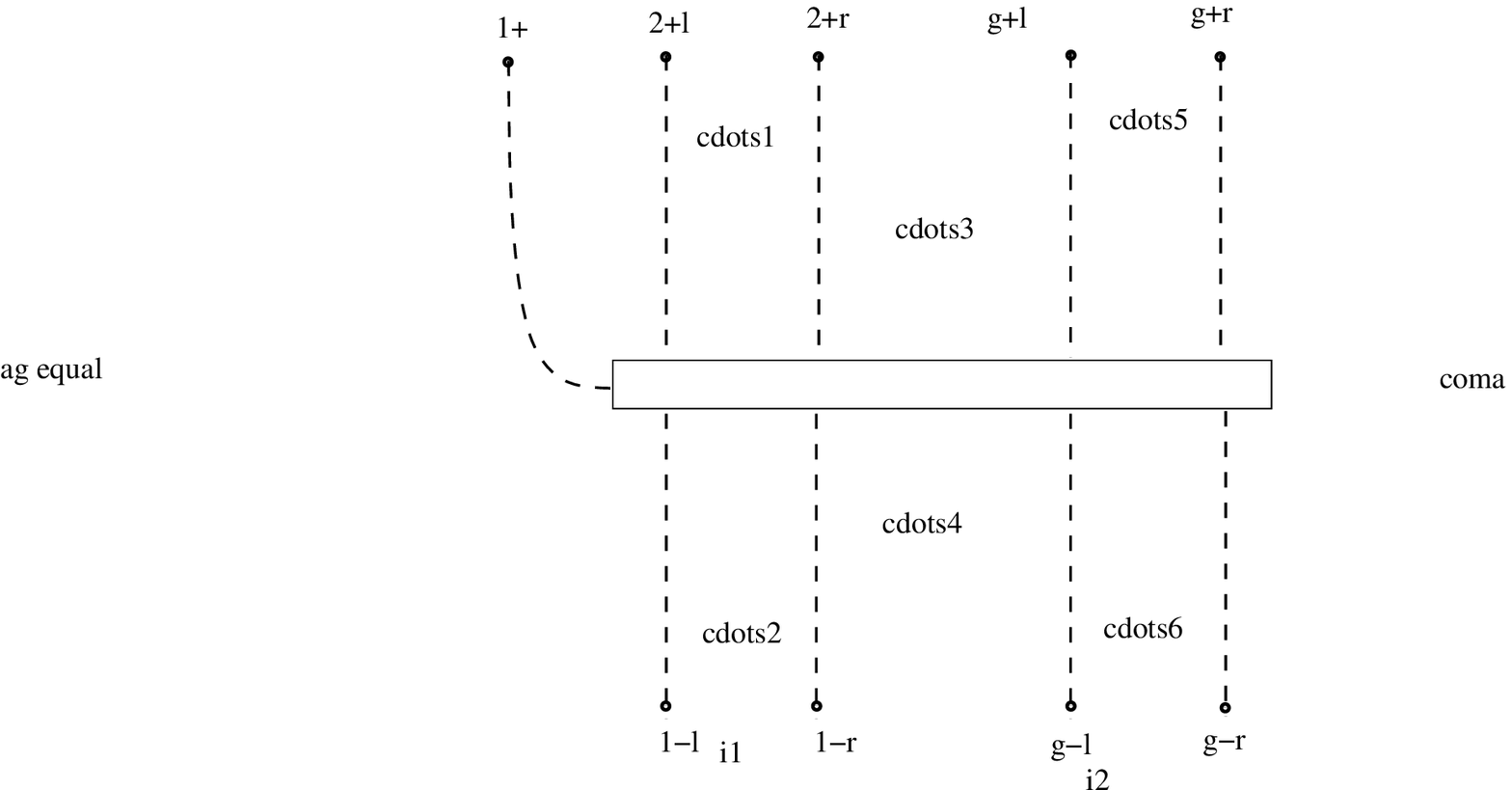}
\adjustrelabel <0cm,-0cm> {cdots1}{$\cdots$}
\adjustrelabel <0cm,-0cm> {cdots2}{$\cdots$}
\adjustrelabel <0cm,-0cm> {cdots3}{$\cdots$}
\adjustrelabel <0cm,-0cm> {cdots4}{$\cdots$}
\adjustrelabel <0cm,-0cm> {cdots5}{$\cdots$}
\adjustrelabel <0cm,-0cm> {cdots6}{$\cdots$}
\adjustrelabel <-1cm,-0cm> 
{ag equal}{${\displaystyle a_g := \sum_{i_1,\dots,i_g\geq 0} \frac{1}{i_1! \cdots i_g!}}$}
\adjustrelabel <0cm,-0cm> {1+}{\scriptsize $1^+$}
\adjustrelabel <0cm,-0cm> {2+l}{\scriptsize $2^+$}
\adjustrelabel <0cm,-0cm> {2+r}{\scriptsize $2^+$}
\adjustrelabel <0cm,-0cm> {g+l}{\scriptsize $(g+1)^+$}
\adjustrelabel <0cm,-0cm> {g+r}{\scriptsize $(g+1)^+$}
\adjustrelabel <0cm,-0cm> {1-l}{\scriptsize $1^-$}
\adjustrelabel <0cm,-0cm> {1-r}{\scriptsize $1^-$}
\adjustrelabel <0cm,-0cm> {g-l}{\scriptsize $g^-$}
\adjustrelabel <0cm,-0cm> {g-r}{\scriptsize $g^-$}
\adjustrelabel <-0.4cm,-0cm> {i1}{$\underbrace{\quad \quad \quad}_{i_1}$}
\adjustrelabel <-0.1cm,-0cm> {i2}{$\underbrace{\quad \quad \quad}_{i_g}$}
\adjustrelabel <0cm,-0cm> {coma}{$,$}
\endrelabelbox}
\vspace{0.5cm}

\noindent
where the ``box'' notation is used (see Figure \ref{fig:box}),
and with the convention that $a_0:=0$.

For $g,f\ge 0$, let $\mathcal{I}(g,f)$ denote the subspace of $\tsA(g,f)$ defined by
\begin{gather*}
  \mathcal{I}(g,f)= \{a_{f}\circ x\;|\; x\in \tsA(g,f+1)\}.
\end{gather*}
The vector spaces $\mathcal{I}(g,f)$, where $g,f\geq 0$, 
form an ideal in the linear category $\tsA$, i.e., we have
\begin{gather*}
  \tsA(g,f)\circ \mathcal{I}(h,g) \subset  \mathcal{I}(h,f),\\
  \mathcal{I}(g,f)\circ \tsA(h,g) \subset  \mathcal{I}(h,f).
\end{gather*}
Thus, we can consider the quotient category $\tsA/\mathcal{I}$. 
The ideal $\mathcal{I}$ of $\tsA$ is introduced 
in an equivalent way in \cite{CHM}, where it is proved that 
the LMO functor sends $\sim$ to the congruence relation defined by $\mathcal{I}$.
Hence a functor on the category of Lagrangian cobordisms between closed surfaces:
  \begin{gather*}
    \xymatrix{
      {\LCob} \ar[r]^{\Ztilde} \ar@{->>}[d]_{\operatorname{proj}}
      &{\tsA} \ar@{->>}[d]^{\operatorname{proj}}\\
      {\LCob/\!\!\sim} \ar@{-->}[r]^{\Ztilde} 
      &{\tsA/\mathcal{I}}
      }
  \end{gather*}

We are going to prove the following technical result.

\begin{lemma}
  \label{lem:I}
  Each element $y$ of $\AY\left(\set{g}^+ \cup \set{g}^-\right)$, 
  such that $y\sqcup \Id_g$ belongs to $\mathcal{I}(g,g)$, 
  is sent by the algebra map $s \circ \varphi$
  to an element of $I^<(H_\Q) \subset \A^<(H_\Q)$.
\end{lemma}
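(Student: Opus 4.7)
The plan is to exploit the explicit formula for $a_g$ together with the ordering convention of $\varphi$ to show that an $\omega$-vertex emerges at the smallest position of the ordering once we apply $s\circ\varphi$ to an element $y$ with $y\sqcup\Id_g\in\mathcal{I}(g,g)$.

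First, I would unpack the composition $a_g\circ x$ in the category $\tsA$. The element $a_g\in\tsA(g+1,g)$ has a single $(g+1)^+$-colored vertex, attached via a tree containing boxes to pairs of $(k^+,k^-)$-vertices for $k=1,\ldots,g$; the summation indexed by $i_1,\ldots,i_g\ge 0$ with factorial weights assembles these attachments in an exponential-like fashion. Composition then involves gluing the $(g+1)^+$-vertex of $a_g$ to one of the $(g+1)^-$-vertices of $x$ and matching the remaining $i^-$-vertices of $x$ with the $i^+$-vertices of $a_g$ for $i=1,\ldots,g$. Since $y\sqcup\Id_g$ must factor out the identity, the "strut part" of the composition is absorbed by $\Id_g$, and $y$ itself records the strut-free residue.

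Second, I would apply $\varphi$ to $y$ and examine how the resulting labeled diagrams look. Since $\varphi$ sends $i^-\mapsto\alpha_i$ and $i^+\mapsto\beta_i$ and places all $\alpha_i$-vertices before all $\beta_j$-vertices, each $(k^+,k^-)$-pair attached via a box of $a_g$ becomes, after summing over $k=1,\ldots,g$ with the factorial weights of $a_g$, precisely the expansion of an $\omega$-vertex as prescribed by rule \eqref{eq:split_omega}. In other words, the algebraic structure of $a_g$ is designed to mirror the bivector $\omega=\sum_k\alpha_k\wedge\beta_k$, and $\varphi$ translates this faithfully into $\omega$-vertex notation. The box-sliding argument of Figure \ref{fig:slide_box} is then used to move the resulting $\omega$-vertices into a canonical position within the total ordering.

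Third, I would use Lemma \ref{lem:commutation} to commute one such $\omega$-vertex past every other external vertex of the diagram so that it becomes the smallest external vertex, invoking Lemma \ref{lem:STU-omega} for the STU-like manipulations along the way. The map $s$ only multiplies each diagram by $(-1)^{\chi(D)}$, which does not affect the $\omega$-smallest property; hence $s(\varphi(y))\in I^<(H_\Q)$.

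The main obstacle lies in the second step: one must verify that after contraction with $x$ and applying $\varphi$, the box-tree structure of $a_g$ reassembles, summand by summand and with the right factorial normalizations, into the expansion \eqref{eq:split_omega} of $\omega$-vertices—rather than into arbitrary labeled vertices that happen to involve $\alpha_k$ and $\beta_k$. This requires a careful combinatorial analysis that matches the summation indices $i_1,\ldots,i_g$ of $a_g$ with the multiplicities of gluing sites in $x$, and relies essentially on the fact that $a_g$ was constructed so that its LMO-image corresponds, after filling in the extra handle, to the intersection pairing $\omega$.
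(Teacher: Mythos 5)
Your overall strategy is the same as the paper's: decompose $y\sqcup\Id_g$ using the explicit form of $a_g$, recognize the structure that $a_g$ grafts onto the diagram as the expansion \eqref{eq:split_omega} of an $\omega$-vertex once $\varphi$ is applied, and conclude with the $\omega$-vertex lemmas. But the step you defer as ``the main obstacle'' is the entire content of the paper's proof, and your sketch of the remaining steps would not close the argument. Concretely, the paper first shows that $y\sqcup\Id_g\in\I(g,g)$ forces $y$ to be a series of elements $u_x^Y$, where $u_x=a_g\circ\bigl(x\sqcup\exp_\sqcup(\hbox{struts})\bigr)=u_x^Y\sqcup\Id_g$ and $x$ carries \emph{exactly one} label $1^-$ (two cases: $1^-$ on a component with internal vertices, or $1^-$ on a disjoint strut). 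It then computes $u_x^Y$ explicitly: it is the sum over $k=1,\dots,g$ of diagrams in which the former $1^-$-vertex has become an internal node attached to new $k^-$- and $k^+$-colored vertices, \emph{plus} a companion term, and it is exactly this companion term that supplies the STU-corrections needed for Lemma \ref{lem:STU-omega} to identify $s\circ\varphi(u_x^Y)$ with an $\omega$-smallest diagram on the nose. Your outline produces the first family of terms but not the companion term.

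The omission matters because your third step, as stated, fails. A diagram of $\A^<\sHQ$ carrying an $\omega$-vertex at a non-smallest position does \emph{not} automatically lie in $I^<$. Lemma \ref{lem:commutation} only lets an $\omega$-vertex jump over a complete sub-diagram all of whose external vertices form the block being crossed --- that is what the box-sliding argument requires --- so it cannot move the $\omega$-vertex past individual external vertices of its own component; and if you use the STU-relation of Lemma \ref{lem:STU-omega} instead, every such move creates a contraction term in which the $\omega$-vertex is destroyed, and such terms are not visibly in $I^<$. Hence ``commute the $\omega$-vertex until it is smallest'' is not an available operation. The paper never needs it: the exact formula for $u_x^Y$ shows that all the contraction terms one would have to generate are already present in $u_x^Y$ itself, so no uncontrolled correction ever appears.
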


This lemma shows that $\LMO= s \circ \varphi \circ \ZtildeY: \Cgg \to \A^<$
induces a monoid homomorphism $\LMO:\Cg \to \A^</I^<$ as desired.
Indeed, assume that $M,M' \in \Cgg$ are such that $\bfi(M) = \bfi(M') \in \Cg$.
Then, the Lagrangian cobordisms $M$ and $M'$ are congruent so that 
$\Ztilde(M) - \Ztilde(M') \in \I(g,g)$. Lemma \ref{lem:I} implies that 
$$
\LMO(M)-\LMO(M') =s \circ \varphi(\ZtildeY(M) - \ZtildeY(M')) \in I^<.
$$
This will conclude the proof of Theorem \ref{th:surgery-LMO_closed}.

\begin{proof}[Proof of Lemma \ref{lem:I}]
Let $y \in \AY\left(\set{g}^+ \cup \set{g}^-\right)$ be such that
$$
t:= y \sqcup \exp_\sqcup\left(\sum_{i=1}^g\strutgraph{i^-}{i^+}\right) \ \in \I(g,g).
$$ 
Then, $t$ is a series of elements of the form
\begin{gather*}
u_{x}:=a_g\circ\left(x\sqcup 
\exp_\sqcup\left(\sum_{i=1}^g \strutgraph{(i+1)^-}{i^+}\quad \quad \right)\right),
\end{gather*}
where either 
\begin{enumerate}
\item $x\in \AY\left(\set{g}^+ \cup \set{g+1}^-\right)$ 
is a diagram involving exactly one label $1^-$,
or
\item $x= x' \sqcup x''$ with 
$x'\in \AY\left(\set{g}^+ \cup \set{g+1}^-\right)$ involving no label $1^-$,
and $x''=\strutgraph{1^-}{e^+}$ with $e\in \{1,\ldots ,g\}$ 
or $x''=\strutgraphbot{1^-}{e^-}$ with $e\in\{2,\ldots ,g+1\}$.
\end{enumerate}
Then, we have
\begin{gather*}
    u_{x}= u_{x}^Y
    \sqcup \exp_\sqcup\left(\sum_{i=1}^g\strutgraph{i^-}{i^+}\right),
\end{gather*}
where 

\centerline{\relabelbox \small
\epsfxsize 5truein \epsfbox{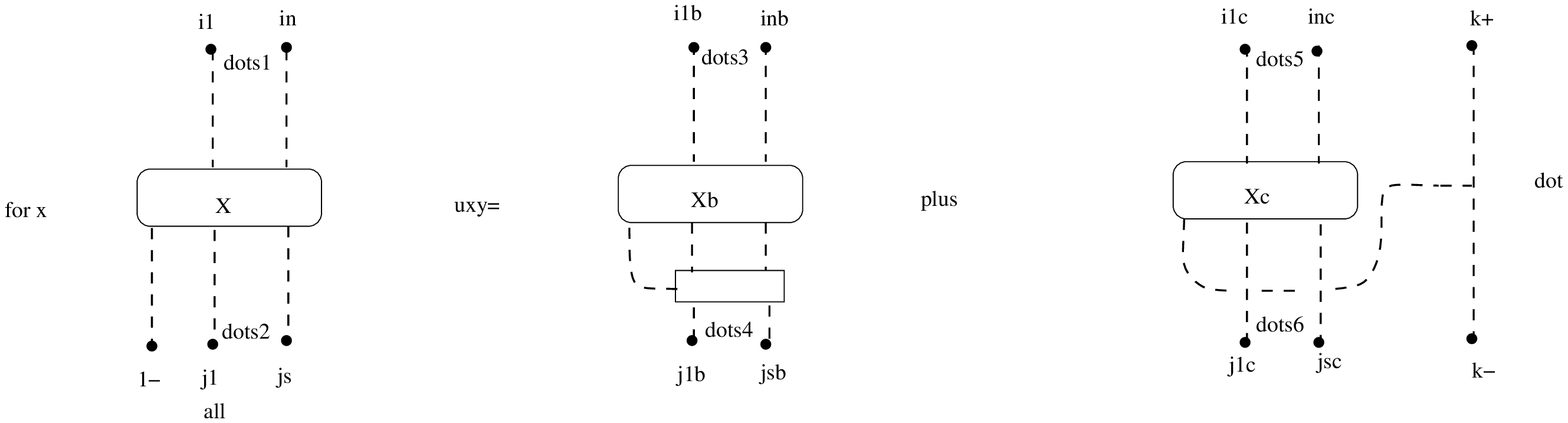}
\adjustrelabel <-0cm,-0.cm> {dots1}{$\dots$}
\adjustrelabel <-0cm,-0.cm> {dots2}{$\dots$}
\adjustrelabel <-0cm,-0.cm> {dots3}{$\dots$}
\adjustrelabel <-0cm,-0.cm> {dots4}{$\dots$}
\adjustrelabel <-0cm,-0.cm> {dots5}{$\dots$}
\adjustrelabel <-0cm,-0.cm> {dots6}{$\dots$}
\adjustrelabel <-0.25cm,0.05cm> {for x}{for \ $x=$}
\adjustrelabel <-0.6cm,-0.cm> {uxy= }{, \quad \quad $u_x^Y=$}
\adjustrelabel <-0cm,-0.05cm> {plus}{${\displaystyle +  \quad \quad \sum_{k=1}^g}$}
\adjustrelabel <-0cm,-0.cm> {1-}{$1^-$}
\adjustrelabel <-0cm,0.2cm> {all}{$\underbrace{\quad \ \ \quad}_{\neq 1^- }$}
\adjustrelabel <-0cm,-0.cm> {i1}{\scriptsize $i_1^+$}
\adjustrelabel <-0cm,-0.cm> {in}{\scriptsize $i_n^+$}
\adjustrelabel <-0cm,-0.cm> {j1}{\scriptsize $j_1^-$}
\adjustrelabel <-0cm,-0.cm> {js}{\scriptsize $j_s^-$}
\adjustrelabel <-0cm,-0.cm> {i1b}{\scriptsize $i_1^+$}
\adjustrelabel <-0cm,-0.cm> {inb}{\scriptsize $i_n^+$}
\adjustrelabel <-0.6cm,-0.cm> {j1b}{\scriptsize $(j_1\!-\!1)^-$}
\adjustrelabel <-0cm,-0.cm> {jsb}{\scriptsize $(j_s\!-\!1)^-$}
\adjustrelabel <-0cm,-0.cm> {i1c}{\scriptsize $i_1^+$}
\adjustrelabel <-0cm,-0.cm> {inc}{\scriptsize $i_n^+$}
\adjustrelabel <-0.6cm,-0.05cm> {j1c}{\scriptsize $(j_1\!-\!1)^-$}
\adjustrelabel <-0cm,-0.1cm> {jsc}{\scriptsize $(j_s\!-\!1)^-$}
\adjustrelabel <-0cm,-0.cm> {k-}{\scriptsize $k^-$}
\adjustrelabel <0cm,-0.cm> {k+}{\scriptsize $k^+$}
\adjustrelabel <0cm,-0.cm> {dot}{.}
\adjustrelabel <0cm,-0.cm> {X}{$X$}
\adjustrelabel <0cm,-0.cm> {Xb}{$X$}
\adjustrelabel <0cm,-0.cm> {Xc}{$X$}
\endrelabelbox}
\vspace{0.7cm}
\noindent
So, it is enough to show that, for all $x$ of the above form,
$s\circ \varphi(u_x^Y)$ is an $\omega$-smallest diagram. 
This follows from Lemma \ref{lem:STU-omega}.
\end{proof}

\begin{remark}
The surgery map $\psi: \A^{<,c}/I^{<,c} \to \Gr^Y \Cg \oQ$ 
does not depend on the choice (\ref{eq:surface_embedding}) 
of the embedding  $\bfi:\sgg \hookrightarrow \sg$
because, by clasper calculus, it can be defined directly with no reference to $\sgg$.
Consequently, $\LMO=\psi^{-1}: \Gr^Y \Cg \oQ \to \A^{<,c}/I^{<,c}$ 
is independent of $\bfi$ too. But, the monoid homomorphism
$\LMO: \Cg \to \A^{<}/I^<$ does depend on the choices of $\bfi$ 
and $(\alpha_1,\dots,\alpha_g,\beta_1,\dots,\beta_g)$, see Remark \ref{rem:basis}.
\end{remark}

\subsection{The ideals $I\subset \A$ and $I^c\subset \Ac{}$}

\label{sec:symmetrized_ideals}

We defined in \S~\ref{subsec:SJD}  an $\SpHQ$-equivariant, graded Hopf algebra
isomorphism
\begin{gather*}
  \chi  : \AHQ \simeqto \Ag{},
\end{gather*}
which restricts to an $\SpHQ$-equivariant, graded Lie algebra
isomorphism
\begin{gather*}
  \chi  : \Ac{} \simeqto \Agc{}.
\end{gather*}
The purpose of this subsection is to compute
\begin{gather*}
  I:=\chi ^{-1}(I^<)\subset \AHQ,\quad 
  I^c:=\chi ^{-1}(I^{<,c})\subset \Ac{},
\end{gather*}
which are ideals in $\A$ and $\Ac{}$, respectively, closed under the $\SpHQ$-action. 

\begin{proposition}
\label{prop:I_symmetrized}
The subspace $I$ of $\A$ is spanned by elements of the form
\begin{equation}
\label{eq:I_relations}
\begin{array}{c}
{\relabelbox \small
\epsfxsize 5.5truein \epsfbox{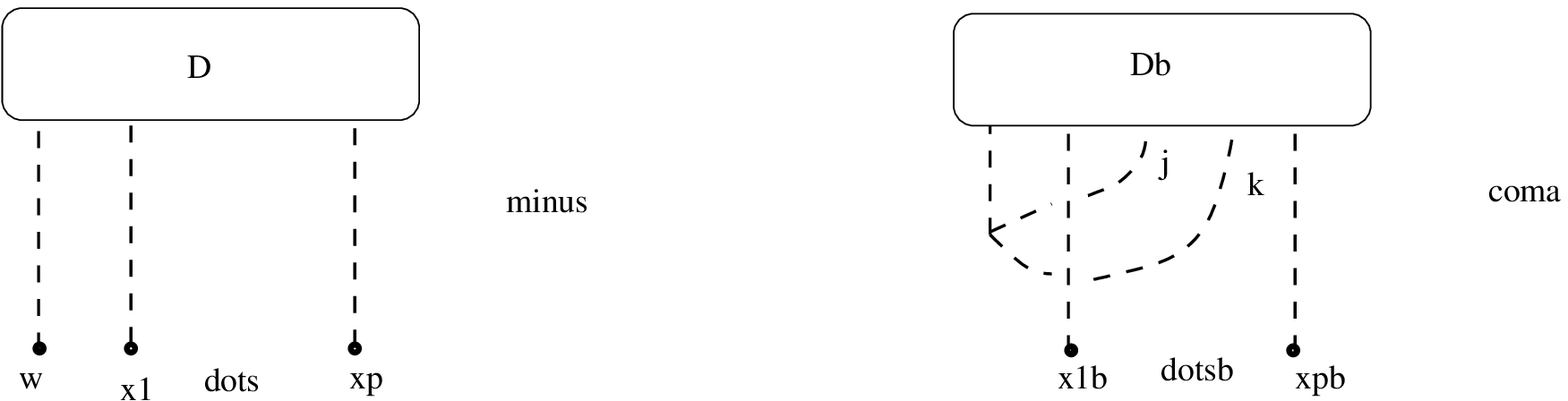}
\adjustrelabel <0cm,-0.cm> {w}{$\omega$}
\adjustrelabel <0cm,0.15cm> {x1}{\scriptsize $x_1$}
\adjustrelabel <0cm,0.1cm> {xp}{\scriptsize $x_e$}
\adjustrelabel <-0.1cm,0.05cm> {x1b}{\scriptsize $x_1$}
\adjustrelabel <0.005cm,0.05cm> {xpb}{\scriptsize $x_e$}
\adjustrelabel <0cm,0.1cm> {dots}{\scriptsize $\cdots \cdots$}
\adjustrelabel <0cm,-0.cm>
{minus}{${\displaystyle -\frac{1}{4}\sum_{1\leq j <k \leq e} \omega(x_j,x_k)}$}
\adjustrelabel <0cm,-0.cm> {D}{$D$}
\adjustrelabel <0cm,-0.cm> {Db}{$D$}
\adjustrelabel <-0.05cm,0.1cm> {j}{\scriptsize $j$}
\adjustrelabel <-0.05cm,0.25cm> {k}{\scriptsize $k$}
\adjustrelabel <-0.7cm,-0.05cm> {dotsb}{\scriptsize $\cdots \widehat{x_j}\cdots\widehat{x_k}\cdots $}
\adjustrelabel <-0.3cm,-0.cm> {coma}{,}
\endrelabelbox}
\end{array}\hspace{-1.8cm}
\end{equation}
where $x_1\dots,x_e\in H_\Q$ and where the $\omega$-vertex means
\begin{equation}
\label{eq:split_omega_bis}
\begin{array}{c}
{\relabelbox \small
\epsfxsize 2.5truein \epsfbox{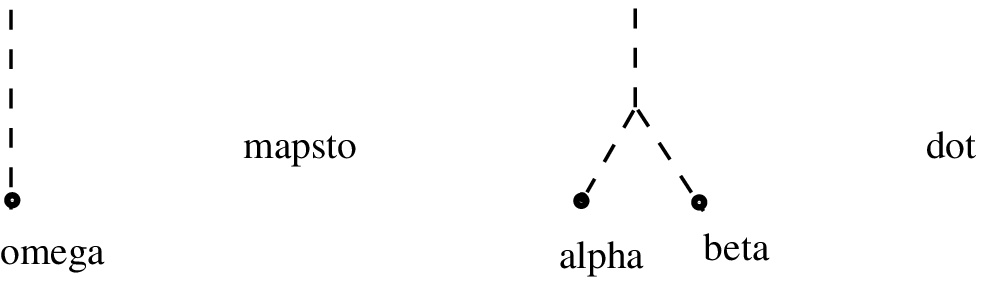}
\adjustrelabel <0cm,-0cm> {mapsto}{${\displaystyle = \quad \sum_{i=1}^g}$}
\adjustrelabel <0cm,-0cm> {omega}{$\omega$}
\adjustrelabel <0cm,-0.05cm> {alpha}{$\alpha_i$}
\adjustrelabel <0cm,-0.05cm> {beta}{$\beta_i$}
\adjustrelabel <0cm,-0cm> {dot}{.}
\endrelabelbox}
\end{array}
\end{equation} 
Moreover, the subspace $I^c$ of $\Ac{}$ is spanned 
by elements of the form (\ref{eq:I_relations}) where $D$ is assumed to be connected.
\end{proposition}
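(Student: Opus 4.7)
\emph{The plan} is to exploit the explicit formula \eqref{eq:chi_inverse} for $\chi^{-1}$, together with the fact that $I^<$ is spanned by $\omega$-smallest diagrams---or equivalently, by diagrams with an $\omega$-vertex in \emph{any} position of the ordering, thanks to the commutation identity of Lemma \ref{lem:commutation}. Let $J\subset \A$ denote the subspace spanned by the elements displayed in \eqref{eq:I_relations} and $J^c\subset \A^c$ its connected counterpart. I will prove both $J\subseteq I$ and $I\subseteq J$ by direct computation with the symmetrization map.

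\emph{Proving $J\subseteq I$.} Take an element
$w=D^\omega - \frac{1}{4}\sum_{j<k}\omega(x_j,x_k)\, D_{\widehat{x_j},\widehat{x_k}}$
of the form \eqref{eq:I_relations}, where $D^\omega$ denotes $D$ with an added $\omega$-vertex. I compute $\chi(w)\in \A^<$ by summing over all orderings of the external vertices, including the adjacent pair $\alpha_i<\beta_i$ produced by expanding the $\omega$-vertex via \eqref{eq:split_omega_bis}. Those orderings in which the pair already sits below the vertices $x_1,\dots,x_e$ yield $\omega$-smallest diagrams belonging to $I^<$ on the nose. For the remaining orderings, Lemma \ref{lem:commutation} together with the STU-like relation allows one to slide the $\omega$-pair to the bottom of the ordering, at the cost of correction terms which, by a direct inspection of \eqref{eq:split_omega_bis} and the definition of $\omega(x_j,x_k)$, take exactly the form $\omega(x_j,x_k)\,D_{\widehat{x_j},\widehat{x_k}}$. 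The subtracted $-\frac{1}{4}$ term is designed precisely to cancel these corrections, leaving $\chi(w)\in I^<$.

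\emph{Proving $I\subseteq J$.} Conversely, take any $\omega$-smallest generator $D^\omega$ of $I^<$ with underlying Jacobi diagram $D$ carrying external vertices colored $x_1,\dots,x_e$. Expanding the $\omega$-vertex as $\sum_i\alpha_i<\beta_i$ and applying \eqref{eq:chi_inverse} produces a sum over all pairings of the extended external vertex set $\{\alpha_i,\beta_i,x_1,\dots,x_e\}$. I would classify these pairings into three types: (i) those avoiding both $\alpha_i$ and $\beta_i$; (ii) those pairing exactly one of $\alpha_i,\beta_i$ with some $x_j$; (iii) those using the pairing $\{\alpha_i,\beta_i\}$. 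Summing over $i$ and invoking the identity $\sum_i \alpha_i\otimes\beta_i=\omega$, type~(ii) reassembles into an $\omega$-vertex attached to a partially contracted $D$, and type~(iii) produces diagrams with a new internal loop that fit into the $\omega$-vertex formalism of the first term in \eqref{eq:I_relations}. Only type~(i) survives as a genuine $\omega$-free correction, and the factor $\frac{1}{4}$ arises from the $\frac{1}{2^p}$ in \eqref{eq:chi_inverse} at $p=2$ combined with the factor $2$ coming from the two orderings of the pair $\{\alpha_i,\beta_i\}$. This shows $\chi^{-1}(D^\omega)\in J$.

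\emph{Connected case and main obstacle.} Since $\chi$ is a Hopf algebra isomorphism sending primitive elements to primitive elements, $I^c=I\cap \A^c$ corresponds under $\chi$ to $I^{<,c}=I^<\cap \A^{<,c}$; the generators \eqref{eq:I_relations} with $D$ connected are primitive (the $\omega$-vertex being attached to $D$ keeps the diagram connected) and span $I^c$ by restricting the argument above. The main obstacle throughout is the combinatorial bookkeeping needed to identify the coefficient $\frac{1}{4}$ and the various $\omega$-contractions unambiguously; I expect the cleanest route is to lift the entire computation to the ambient space $\widetilde{\A}\sHQ$ introduced in \eqref{eq:big_space}, which cleanly separates the ordering from the $\omega$-expansion, and to exploit the exponential presentation $\widetilde{\sigma}=\exp_\circ(\widetilde{\gamma}/2)$ from the proof of Proposition \ref{lem:chi} so that the combinatorial factors drop out essentially for free.
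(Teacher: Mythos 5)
Your overall strategy---expand the $\omega$-vertex via \eqref{eq:split_omega_bis}, apply the explicit formula \eqref{eq:chi_inverse} for $\chi^{-1}$ to an $\omega$-smallest generator, and sort the resulting contractions---is the same as the paper's, but the sketch has two genuine problems in the direction $I\subseteq J$. First, you apply $\chi^{-1}$ to a \emph{single} ordered $\omega$-smallest diagram. For a fixed ordering, formula \eqref{eq:chi_inverse} produces contraction terms $\omega(x_j,x_k)\cdot D_{(v_j=v_k)}$ between two $H_\Q$-colored vertices (your type (i)), and these do \emph{not} cancel; they only cancel in pairs, by skew-symmetry of $\omega$, after one has first averaged over all orderings of the $H_\Q$-colored external vertices while keeping the $\omega$-vertex smallest. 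This averaging is exactly the paper's isomorphism $k\colon\cI\to\cI^<$, and the paper computes $\chi^{-1}\circ\lambda^<\circ k$ rather than $\chi^{-1}$ of a bare generator; without that step (or an explicit induction on the number of external vertices to absorb the surviving type (i) terms back into $I^<$), your computation does not close up to the two-term expression \eqref{eq:I_relations}.

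Second, your classification misattributes which contractions produce which terms. The correction $-\tfrac14\sum_{j<k}\omega(x_j,x_k)(\cdots)$ in \eqref{eq:I_relations} does \emph{not} come from type (i); it comes from the $p=2$ double mixed contraction in which $\alpha_i$ is glued to some $x_j$ \emph{and} $\beta_i$ to some $x_k$ (note also that your coefficient count ``$\tfrac1{2^p}$ at $p=2$ times a factor $2$'' gives $\tfrac12$, not $\tfrac14$). Your type (ii) terms (a single one of $\alpha_i,\beta_i$ contracted with an $x_j$) do not ``reassemble into an $\omega$-vertex attached to a partially contracted $D$'': after summing over $i$ they are killed by the box-sliding argument of Figure \ref{fig:slide_box}, which turns them into diagrams with a looped edge. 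Likewise your type (iii) (the pairing $\{\alpha_i,\beta_i\}$) vanishes outright by the AS relation rather than contributing to the first term. As written, your accounting would therefore yield a formula different from \eqref{eq:I_relations}. The converse inclusion $J\subseteq I$ that you argue separately via Lemma \ref{lem:commutation} is not needed once the computation is done correctly: since $\lambda=\chi^{-1}\circ\lambda^<\circ k$ is surjective onto $I$ and every $\lambda(d)$ is of the form \eqref{eq:I_relations} (and conversely every such expression arises as some $\lambda(d)$), both inclusions follow from the single computation.
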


\begin{proof}
Let $\cI$ denote the $\Q $-vector space spanned by $\omega $-diagrams with
exactly one $\omega $-vertex, modulo the AS, IHX and multilinearity relations.  
Similarly, let $\cI^<$ denote the $\Q $-vector space spanned by $\omega $-smallest
diagrams with exactly one $\omega $-vertex, modulo the AS, IHX, STU-like
and multilinearity relations.  
(Here, the STU-like relation is not applicable to the $\omega $-vertex.)  
There is a natural surjective linear map
\begin{gather*}
  \lambda ^<\colon\thinspace \cI^< \longrightarrow I^<,
\end{gather*}
which maps each diagram in $\cI^<$ to itself, using the rule (\ref{eq:split_omega}).  

We consider the linear map
\begin{gather*}
  k\colon\thinspace \cI \longrightarrow \cI^<,
\end{gather*}
that maps each diagram $D \in \cI$ with $e$ 
$H_\Q$-colored external vertices to the average of the $e!$ $\omega $-smallest
diagrams obtained from $D$ by choosing any order on the $H_\Q$-colored external vertices,
and by declaring them to be larger than the $\omega $-vertex. 
One can prove that $k$ is an isomorphism, 
similarly to the proof in \S~\ref{subsec:SJD} that
$\chi \colon\thinspace \AHQ\to \Ag{}$ is an isomorphism.

Next, we define the linear map
\begin{gather*}
  \lambda :=\chi^{-1} \circ \lambda ^< \circ k\colon\thinspace 
  \cI\xto{k}
  \cI^< \xto{\lambda ^<}
  I^< \xto{\chi ^{-1}}
  I.
\end{gather*}  
Since $\lambda$ is surjective, it is enough to prove that, 
for all $\omega$-diagram with only one $\omega$-vertex\\

\centerline{\relabelbox \small
\epsfxsize 2.5truein \epsfbox{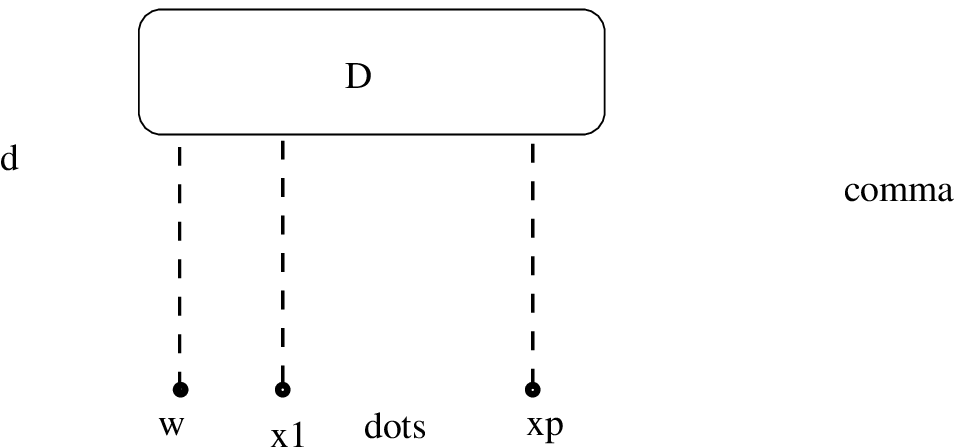}
\adjustrelabel <0cm,-0.cm> {w}{$\omega$}
\adjustrelabel <0cm,0.1cm> {x1}{\scriptsize $x_1$}
\adjustrelabel <0cm,0.05cm> {xp}{\scriptsize $x_e$}
\adjustrelabel <0cm,0.1cm> {dots}{\scriptsize $\cdots \cdots$}
\adjustrelabel <0cm,-0.cm> {D}{$D$}
\adjustrelabel <0cm,-0.4cm> {d}{$d=$}
\adjustrelabel <-0.5cm,-0.cm> {comma}{,}
\endrelabelbox}
\noindent
$\lambda(d)$ is given by formula (\ref{eq:I_relations}). Thus, we have to compute
\begin{eqnarray*}
\lambda(d) &= & \chi^{-1}\left(\begin{array}{c}{\relabelbox \small
\epsfxsize 2.5truein \epsfbox{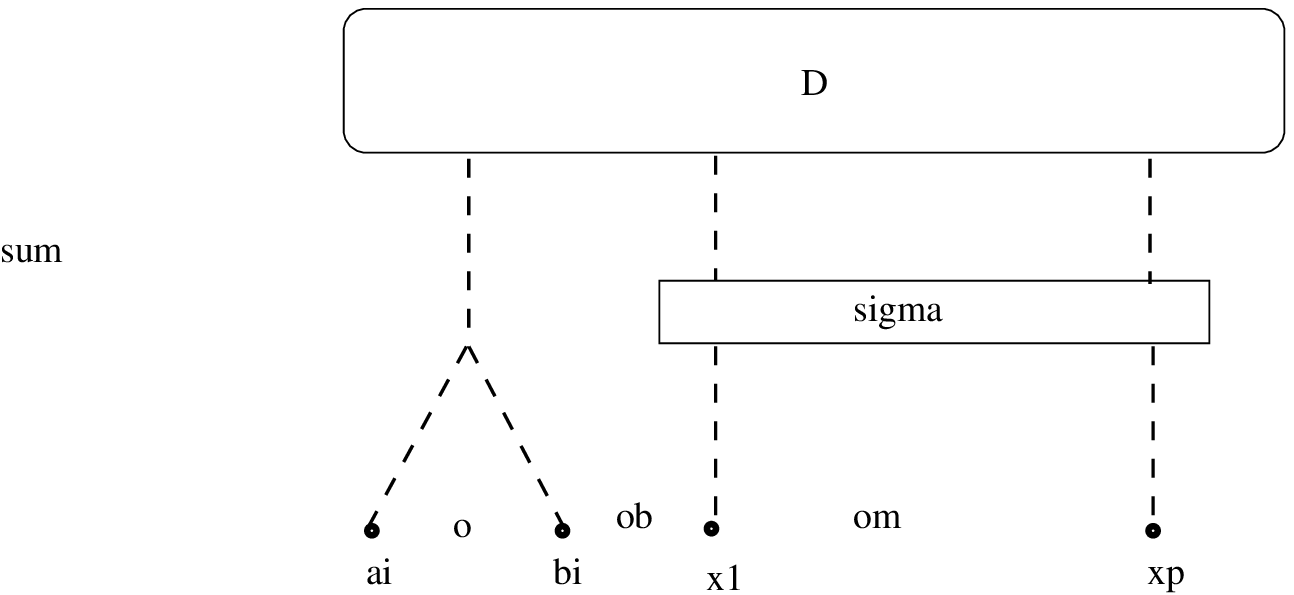}
\adjustrelabel <0cm,-0.3cm> 
{sum}{${\displaystyle \frac{1}{e!} \sum_{\substack{1\leq i \leq g\\ \sigma\in S_e}}}$}
\adjustrelabel <0cm,0cm> {x1}{$x_{\sigma(1)}$}
\adjustrelabel <0cm,0.0cm> {xp}{$x_{\sigma(e)}$}
\adjustrelabel <0cm,0cm> {ai}{$\alpha_i$}
\adjustrelabel <0cm,-0.1cm> {bi}{$\beta_i$}
\adjustrelabel <0cm,0cm> {o}{$<$}
\adjustrelabel <0cm,-0.05cm> {ob}{$<$}
\adjustrelabel <-0.1cm,-0.1cm> {om}{$<\cdots<$}
\adjustrelabel <0cm,-0.cm> {D}{$D$}
\adjustrelabel <0.2cm,-0.05cm> {sigma}{$\sigma$}
\endrelabelbox}\end{array}\right)\\
&=& t_0 + t_1 + t_2 + \cdots, 
\end{eqnarray*}
where $t_p$ is the term involving $p$ simultaneous contractions 
in the formula (\ref{eq:chi_inverse}) that describes $\chi^{-1}$.

The term $t_0$ is $d$ itself, seen as an element of $\A$ using  the rule (\ref{eq:split_omega_bis}).
The term $t_1$ involves three types of contraction:
\begin{enumerate}
\item The $\alpha_i$-colored vertex is contracted with the $\beta_i$-colored vertex: This gives 
a diagram with a looped edge, which is trivial by the AS relation.
\item The $\alpha_i$-colored vertex,
or the $\beta_i$-colored vertex, is contracted with an $x_{\sigma(j)}$-colored vertex.
The sum of all these contractions (over $i=1,\dots,g$ and $j=1,\dots,e$)
gives the following linear combination of diagrams:
$$
{\relabelbox \small
\epsfxsize 2.5truein \epsfbox{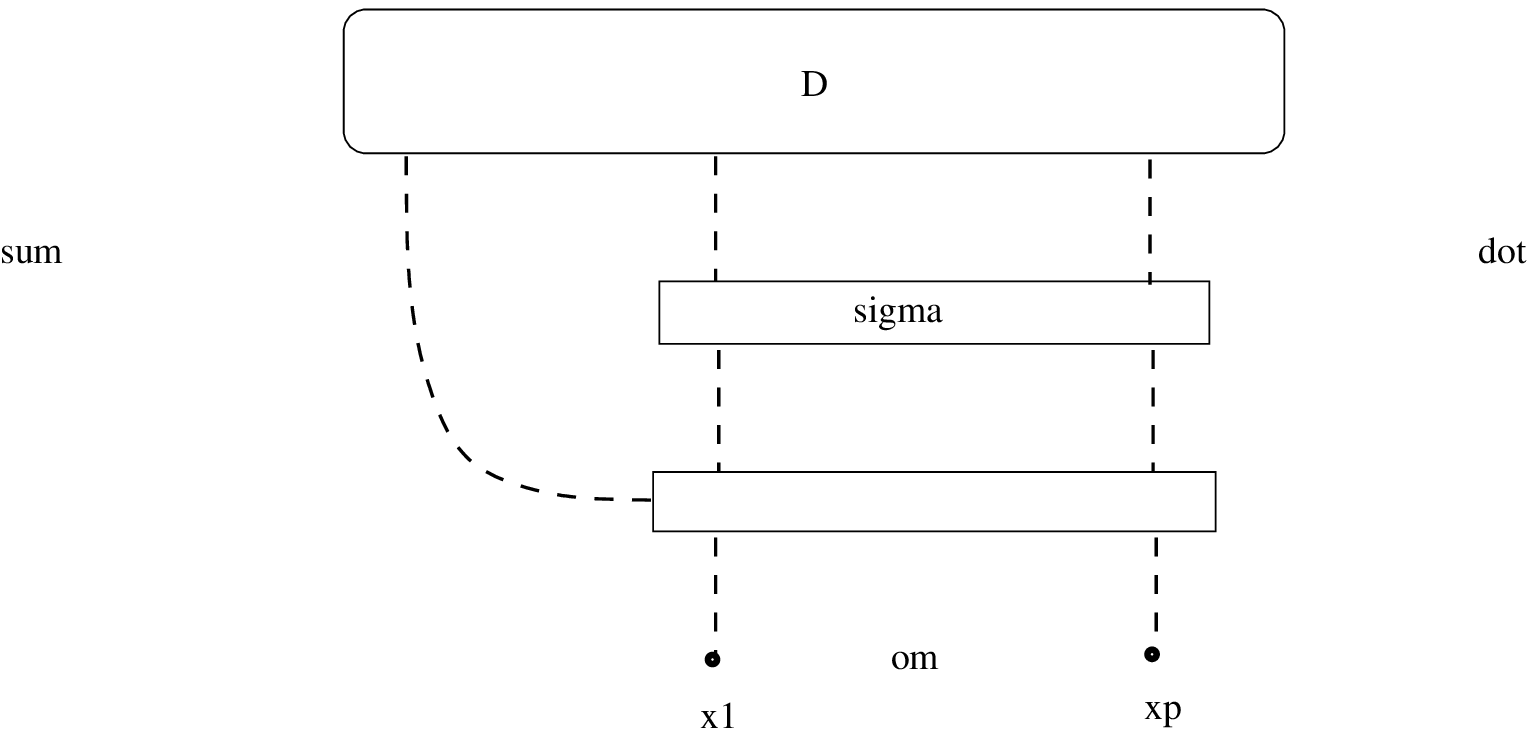}
\adjustrelabel <-0.5cm,-0.3cm> 
{sum}{${\displaystyle -\frac{1}{2\cdot e!} \sum_{\sigma\in S_e}}$}
\adjustrelabel <0cm,0cm> {x1}{$x_{\sigma(1)}$}
\adjustrelabel <0cm,0.0cm> {xp}{$x_{\sigma(e)}$}
\adjustrelabel <-0cm,-0cm> {om}{$\cdots$}
\adjustrelabel <0cm,-0.cm> {D}{$D$}
\adjustrelabel <0.2cm,-0.05cm> {sigma}{$\sigma$}
\adjustrelabel <0cm,-0.cm> {dot}{.}
\endrelabelbox}
$$
The generic term of this sum can be transformed 
to a diagram with a looped edge by sliding its box (see Figure \ref{fig:slide_box}).
So, it is trivial.
\item An $x_{\sigma(j)}$-colored vertex is contracted with an $x_{\sigma(i)}$-colored vertex:
Such a term will cancel with another one of the same kind since $\omega$ is skew-symmetric.
\end{enumerate}
So, $t_1$ is trivial.
As for the term $t_2$, the above arguments show that it is enough to consider
the contraction of the $\alpha_i$-colored vertex with an $x_{\sigma(j)}$-colored vertex
together with  the contraction of the $\beta_i$-colored vertex with an 
$x_{\sigma(k)}$-colored vertex (where $j\neq k$ and $j,k \in \{1,\dots,e\}$).
The sum of all these contractions gives the second term of the formula (\ref{eq:I_relations}).
Finally, the higher terms $t_p$ are trivial since 
a $p$-fold contraction with $p\geq 2$ should involve at least 
the contraction of an $x_{\sigma(j)}$-colored vertex with an $x_{\sigma(i)}$-colored vertex.
\end{proof}

\subsection{Algebraic description of $\Gr \mapcyl \oQ$}

In this subsection, we prove analogues of Theorems \ref{th:diagram}
and \ref{th:degree_2} for the closed surface $\Sigma_g$.
In this case, the first Johnson homomorphism is a map
$$
\tau_1: \Ig \longrightarrow \Lambda^3H/H,
$$
where $H$ embeds into $\Lambda^3H$ by $h \mapsto h\wedge \omega$. 
It is induced by the first Johnson homomorphism of the boundary case,
in the sense that the following diagram commutes:
$$
\xymatrix{
{\Igg} \ar[d]_-{\tau_1} \ar@{->>}[r]^-{\bfi} & {\Ig} \ar[d]^-{\tau_1}\\
{\Lambda^3 H} \ar@{->>}[r]_-{\operatorname{proj}} &
{\Lambda^3 H/H.}
} 
$$
The homomorphism $\tau_1$ induces an isomorphism
between $\Gr_1^\Gamma \Ig \otimes \Q$ and $\Lambda^3H_\Q/H_\Q$ \cite{Johnson},
hence a Lie algebra epimorphism
$$
J: \Lie(\Lambda^3 H_\Q/H_\Q) \longrightarrow \Gr^\Gamma \Ig \otimes \Q.
$$
Define $\operatorname{R}(\Ig):= \Ker(J)$. Hain's Theorem \ref{th:Hain} holds in the closed case as well.
See \cite{Hain} where the following result is also proved:

\begin{proposition}[Hain]
\label{prop:Hain}
If $g\geq 3$, then the $\Sp\sHQ$-module of quadratic relations $\operatorname{R}_2(\Ig)$ 
is spanned by the class of $r_1$, 
where $r_1 \in \Lie_2(\Lambda^3 H_\Q)$ has been defined in \S~\ref{subsec:quadratic_relations}. 
\end{proposition}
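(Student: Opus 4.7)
My plan is to deduce the proposition from the bordered surface case---namely from \eqref{e10}, the extension of Proposition~\ref{prop:HHS} to $g\ge3$---via the Lie algebra surjection induced by the surface inclusion $\bfi:\sgg\hookrightarrow\sg$. Because $\bfi$ induces a group surjection $\Igg\twoheadrightarrow\Ig$, the induced map $\Gr\bfi\oQ:\Gr^\Gamma\Igg\oQ\twoheadrightarrow\Gr^\Gamma\Ig\oQ$ is a surjection of graded Lie algebras, fitting into the commutative square
\[
\xymatrix{
\Lie(\LHQ) \ar@{->>}[r]^-{J} \ar@{->>}[d]_-{\pi}
& \Gr^\Gamma\Igg\oQ \ar@{->>}[d]^-{\Gr\bfi\oQ}\\
\Lie(\LHQ/\HQ) \ar@{->>}[r]^-{J}
& \Gr^\Gamma\Ig\oQ,
}
\]
where $\pi$ is induced by the projection $\LHQ\twoheadrightarrow\LHQ/\HQ$; in degree~$2$, its kernel is $\Ker(\pi_2)=[\HQ,\LHQ]\subset\Lie_2(\LHQ)$, namely the brackets with one argument in the image of $h\mapsto h\wedge\omega$. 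Since the second factor of $r_2=[\alpha_1\wedge\alpha_2\wedge\beta_2,\alpha_g\wedge\omega]$ is exactly the image of $\alpha_g$, we get $\pi(r_2)=0$, whereas $r_1$ involves no $\omega$, so $[r_1]:=\pi(r_1)$ is non-trivial for $g\ge4$ (and vanishes for $g=3$, consistently since $r_1=0$ by definition in that case). Combined with $r_1,r_2\in\operatorname{R}_2(\Igg)$ from~\eqref{e10}, the square yields $[r_1]\in\operatorname{R}_2(\Ig)$, and $\Sp(\HQ)$-equivariance of the entire picture then gives $\langle[r_1]\rangle_{\Sp(\HQ)}\subset\operatorname{R}_2(\Ig)$.

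The reverse inclusion is the main content, and I would attack it by proving $\operatorname{R}_2(\Ig)=\pi(\operatorname{R}_2(\Igg))$, which together with $\pi(\langle r_1,r_2\rangle_{\Sp(\HQ)})=\langle[r_1]\rangle_{\Sp(\HQ)}$ finishes the proof. By a diagram chase in the commutative square, this equality is equivalent to the statement that $\Ker(\Gr_2\bfi\oQ)$ coincides with the image in $\Gr_2^\Gamma\Igg\oQ$ of $[\HQ,\LHQ]$, i.e.\ the degree-$2$ part of the Lie ideal of $\Gr^\Gamma\Igg\oQ$ generated by its degree-$1$ kernel $\HQ$. I would derive this from the Birman exact sequence for $\Igg\twoheadrightarrow\Ig$: its kernel is generated by the boundary Dehn twist (central, hence vanishing in the abelianization) together with Torelli point-pushing maps, whose effect at the $\Gr_2^\Gamma$-level is visibly a commutator of an element of $\HQ$ with an element of $\LHQ$.

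The hard part is precisely this last step, since controlling the behaviour of point-pushing maps along the lower central series of $\Igg$ is delicate. If this proves stubborn, a safer fallback is to redo Hain's and Habegger--Sorger's $\Sp(\HQ)$-irreducible module analysis directly in the closed case: compute the irreducible decomposition of $\Lambda^2(\LHQ/\HQ)$ (the closed analogue of Lemma~\ref{lem:l2l3}), identify the submodule $\langle[r_1]\rangle_{\Sp(\HQ)}$ (the analogue of the inclusions~\eqref{eq:inclusions}), and match the complement against the known $\Sp(\HQ)$-decomposition of $\Gr_2^\Gamma\Ig\oQ$ coming from Hain's original argument in the closed setting.
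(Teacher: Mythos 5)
Your reduction of the statement to the bordered case is set up correctly: the commutative square with $\pi\colon\Lie(\LHQ)\to\Lie(\LHQ/\HQ)$ does give $\langle [r_1]\rangle_{\SpHQ}\subset\operatorname{R}_2(\Ig)$ from \eqref{e10}, and you have correctly identified that everything hinges on the single equality $\operatorname{R}_2(\Ig)=\pi(\operatorname{R}_2(\Igg))$, equivalently on showing that $\Ker\bigl(\Gr_2\bfi\oQ\colon\Gr_2^\Gamma\Igg\oQ\to\Gr_2^\Gamma\Ig\oQ\bigr)$ is exactly $J_2([\HQ\wedge\omega,\LHQ])$. But that step is where the entire difficulty of the proposition lives, and your proposal does not establish it. The kernel of $\Gr_2\bfi\oQ$ is the image of $K\cap\Gamma_2\Igg$ where $K\cong\pi_1(T^1\Sigma_g)$ is the point-pushing/boundary-twist kernel; besides the classes $\{P([a,b])\}=[J(a\wedge\omega),J(b\wedge\omega)]$, which do land where you want, one must control the degree-$2$ class of the boundary Dehn twist (which is nonzero in $\Gr_2^\Gamma\Igg\oQ$ --- it is detected by the Casson invariant, by Morita's work) and, more seriously, all elements of $K$ that only fall into $\Gamma_2\Igg$ after multiplication by deeper terms. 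You acknowledge this is ``delicate,'' but without it the argument is incomplete, and your fallback (redoing the $\SpHQ$-irreducible analysis in the closed case against ``the known decomposition of $\Gr_2^\Gamma\Ig\oQ$'') is circular in spirit: that decomposition is precisely Hain's theorem, i.e.\ the statement being proved.

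For comparison, the paper obtains the proposition by an entirely different mechanism, with no group theory at the level of point-pushing maps. It first computes, purely diagrammatically, that $\Ker(c_2)=\langle\{r_1\}\rangle_{\SpHQ}$ for the bracket $c_2$ on $\Ac{2}/I_2^c$: this uses the explicit generators of $I_2^c$ from Proposition \ref{prop:I_symmetrized}, explicit preimages under $b_2$ of the generators of $\Ker(\Ac{2,\ev}\to\Ac{2,\ev}/I_2^c)$, and Lemma \ref{lem:ker_bracket}. It then feeds this into the commutative square of Theorem \ref{th:diagram_closed}: since $\overline{Y}_2=\LMO_2\circ(\Gr_2\mapcyl\oQ)\circ\overline{J}_2$ and $\overline{Y}_2$ is injective, $\overline{J}_2$ is injective, which forces $\operatorname{R}_2(\Ig)=\langle\{r_1\}\rangle_{\SpHQ}$. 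In other words, the topological input is the mapping cylinder construction and the LMO invariant, not the Birman exact sequence; the analogue of your missing step is carried out there as a finite computation with explicit Jacobi diagrams rather than with point-pushing elements deep in the lower central series.
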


As an analogue of Theorem \ref{th:diagram} in the closed case, we obtain the following statement:

\begin{theorem}
  \label{th:diagram_closed}
  If $g\ge3$, then the following diagram in the category of graded Lie algebras 
  with $\Sp(H_\Q)$-actions is commutative:
  $$
    \xymatrix{
      {\Lie\left(\LHQ/H_\Q\right) \left/ {\rm{R}}(\Ig) \right. }
      \ar[d]_-{\overline{J}}^-\simeq \ar[rr]^-{\overline{Y}} 
      && {\A^{<,c}\sHQ/I^{<,c}\sHQ} \ar[d]_-{\psi}^-\simeq\\
	 {\Gr^\Gamma \Ig \oQ} \ar[rr]^{\Gr \mapcyl \oQ}
	 && {\Gr^Y \Cg \oQ} \ar@/_1pc/[u]_-{\LMO}
    }
  $$
  Here, $\overline{Y}$ is induced by the Lie algebra map
  $Y: \Lie\left(\LHQ/H_\Q\right)\to \A^{<,c}\sHQ/I^{<,c}\sHQ$ which, in degree $1$,
  sends the class of $x\wedge y\wedge z$ to the class of the $Y$-graph $\Ygraphtop{x}{y}{z}$.
\end{theorem}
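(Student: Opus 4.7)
The plan is to deduce Theorem \ref{th:diagram_closed} from its bordered counterpart (Theorem \ref{th:diagram}) via a cube diagram whose four vertical arrows are built from the surface inclusion $\bfi\colon\sgg\hookrightarrow\sg$.

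First, I would verify that the Lie algebra homomorphism $Y\colon\Lie(\LHQ)\to\A^{<,c}\sHQ$ of Theorem \ref{th:diagram} sends $h\wedge\omega$ (for each $h\in\HQ$) into the Lie ideal $I^{<,c}\sHQ$. Indeed,
\[
Y(h\wedge\omega) \;=\; \sum_{i=1}^g\Ygraphtop{h}{\alpha_i}{\beta_i}
\]
is precisely the image, via rule \eqref{eq:split_omega}, of the connected $\omega$-smallest strut joining an $\omega$-vertex (the smaller one) to an $h$-vertex. Since $I^{<,c}$ is a Lie ideal of $\A^{<,c}$ (see \S~\ref{subsec:ideals}), the bordered $Y$ descends to the Lie algebra homomorphism $Y\colon\Lie(\LHQ/\HQ)\to\A^{<,c}\sHQ/I^{<,c}\sHQ$ that appears in the statement. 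This is the step where the closed case genuinely departs from the bordered one, and it is the main obstacle.

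Next, I would stack the diagram of Theorem \ref{th:diagram} on top of the diagram sought here, to form a cube whose vertical arrows are the canonical algebraic projections on the top layer and the surjections $\Gr\bfi\oQ$ on the bottom layer. The four lateral faces commute for the following reasons: the right face by diagram \eqref{eq:bordered_to_closed}; the left face by naturality of the first Johnson homomorphism under surface inclusions; the front face by naturality of the mapping cylinder construction under $\bfi$; and the back face by the previous paragraph. Since the top face commutes (Theorem \ref{th:diagram}) and every vertical map is surjective, the bottom face commutes. The injectivity of $\psi$ (Theorem \ref{th:surgery-LMO_closed}) then forces $\mathrm{R}(\Ig)=\ker\overline J\subset\ker Y$, so $Y$ further descends modulo $\mathrm{R}(\Ig)$ to yield $\overline Y$ and the commutative diagram in the statement. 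Past the first step, the rest is a routine diagram chase.
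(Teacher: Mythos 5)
Your proposal is correct and follows the same route as the paper, whose entire proof is the one-line observation that the closed diagram commutes because its bordered analogue (Theorem \ref{th:diagram}) does. You have simply made explicit the details implicit in that reduction: the compatibility $Y(h\wedge\omega)\in I^{<,c}\sHQ$ via the rule \eqref{eq:split_omega}, the commutativity of the lateral faces of the cube (using \eqref{eq:bordered_to_closed} and the naturality of $\tau_1$ and $\mapcyl$ under $\bfi$), and the descent of $Y$ modulo ${\rm{R}}(\Ig)$ via the injectivity of $\psi$.
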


\begin{proof}
The diagram commutes because its analogue for the surface $\sgg$ does.
\end{proof}

Let us now consider the situation in degree $2$. 
As in the bounded case, it is convenient to switch from $\A^</I^<$ to $\A/I$.
Thus, we consider the Lie bracket of $\A^c/I^c$ in degree $1+1$:
$$
c_2 := [-,-]_\star: \Lambda^2\left(\A^c_1/I_1^c\right) \longrightarrow \A^c_2/I^c_2.
$$
According to Proposition \ref{prop:I_symmetrized}, 
the subspace $I_2^c\subset \A_2^c$ is generated by 
$$
\Ygraphbotbottop{x}{y}{\omega} -\frac{\omega(x,y)}{4} \cdot \thetagraph\ , \quad \quad \forall x,y \in H_\Q.
$$
So, $I_2^c$ is contained in the even part $\A_{2,\ev}^{c}$.

\begin{proposition} 
If $g\geq 3$, then we have 
$$
\Img(c_2) = \A_{2,\ev}^{c}/I_2^c \quad  \left(\simeq \Gamma_0 + \Gamma_{2 \omega_2}\right)
\quad \quad \hbox{and} \quad \quad
\Ker(c_2) = \langle\, \{r_1\}\, \rangle_{\Sp\sHQ}.
$$
\end{proposition}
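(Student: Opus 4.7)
The plan is to leverage the bounded-case computations (Proposition \ref{prop:im_bracket} and Lemma \ref{lem:ker_bracket}) and push them through the closed-case quotient via the commutative square
\[
\xymatrix{
\Lambda^2\A^c_1 \ar[r]^{b_2} \ar@{->>}[d]_{q} & \A^c_2 \ar@{->>}[d]^{p}\\
\Lambda^2(\A^c_1/I^c_1) \ar[r]^{c_2} & \A^c_2/I^c_2.
}
\]
A preliminary step unwinds Proposition \ref{prop:I_symmetrized} in internal degree one, identifying $I^c_1$ with the submodule $H_\Q\subset\Lambda^3 H_\Q = \A^c_1$ embedded by $h\mapsto h\wedge\omega$, whence $\A^c_1/I^c_1 \simeq \Gamma_{\omega_3}$ for $g\ge3$, and $\Ker(q) = I^c_1\wedge\A^c_1 = H_\Q\wedge\Lambda^3 H_\Q$.

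For the image, surjectivity of $q$ together with Proposition \ref{prop:im_bracket} gives $\Img(c_2) = p(\A^c_{2,\ev}) = \A^c_{2,\ev}/I^c_2$, using $I^c_2\subset \A^c_{2,\ev}$. Recall the $\SpHQ$-decomposition $\A^c_{2,\ev} \simeq 2\Gamma_0 + \Gamma_{\omega_2} + \Gamma_{2\omega_2}$ from \S~\ref{sec:quadratic}. The generator $f(x,y) := \Ygraphbotbottop{x}{y}{\omega} - \tfrac{\omega(x,y)}{4}\thetagraph$ is antisymmetric in $(x,y)$ by AS and the skew-symmetry of $\omega$, defining an $\SpHQ$-equivariant surjection $\Lambda^2 H_\Q = \Gamma_{\omega_2}\oplus\Gamma_0 \twoheadrightarrow I^c_2$. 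I would confirm both isotypic components survive by evaluating on highest-weight representatives, for instance $f(\alpha_1,\alpha_2)$ for $\Gamma_{\omega_2}$ and $\sum_j f(\alpha_j,\beta_j)$ for $\Gamma_0$; the latter remains non-zero because its expansion via \eqref{eq:split_omega_bis} contributes non-trivially to the $\A^c_{2,0}$-summand, linearly independent of $\thetagraph\in\A^c_{2,2}$. This yields $I^c_2 \simeq \Gamma_0 \oplus \Gamma_{\omega_2}$ and hence $\Img(c_2) \simeq \Gamma_0 + \Gamma_{2\omega_2}$.

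For the kernel, the inclusion $\langle\{r_1\}\rangle_{\SpHQ} \subseteq \Ker(c_2)$ is immediate from $b_2(r_1)=0$ (established in the proof of Lemma \ref{lem:ker_bracket}) and commutativity of the square. For the converse, given $\xi\in\Ker(c_2)$ with lift $\tilde\xi\in\Lambda^2\A^c_1$, we have $b_2(\tilde\xi)\in I^c_2$. The key claim is $I^c_2 = [I^c_1,\A^c_1] = b_2(\Ker(q))$; ``$\supseteq$'' follows from $I^c$ being a Lie ideal (Proposition \ref{prop:Hopf_ideal}), while ``$\subseteq$'' requires realizing each generator $f(x,y)$ as an explicit bracket $[Y(h\wedge\omega),Y(z)]$, to be verified via formula \eqref{eq:commutator} on highest-weight vectors of $\Lambda^2 H_\Q$. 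Granted the claim, $\tilde\xi$ differs from an element of $\Ker(b_2)$ by an element of $\Ker(q)$; by Lemma \ref{lem:ker_bracket}, $\Ker(b_2) = \langle r_1,r_2\rangle_{\SpHQ}$, and since $q(r_2)=0$ (the factor $\alpha_g\wedge\omega$ lies in $H_\Q$), we obtain $\xi\in\langle q(r_1)\rangle_{\SpHQ} = \langle\{r_1\}\rangle_{\SpHQ}$.

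The main obstacle is verifying the ``$\subseteq$''-direction of $I^c_2 = [I^c_1,\A^c_1]$: one must expand $[Y(h\wedge\omega), Y(z)]_\star$ via \eqref{eq:commutator} and show that the various partial-contraction terms conspire to produce both $\Ygraphbotbottop{x}{y}{\omega}$ and the $\thetagraph$-correction with the precise coefficient $\omega(x,y)/4$. Should this direct calculation prove unwieldy, an alternative route is a representation-theoretic dimension count: use Lemma \ref{lem:l2l3} together with $\Lambda^3 H_\Q = \Gamma_{\omega_3}\oplus\Gamma_{\omega_1}$ to compute $\Lambda^2\Gamma_{\omega_3}$ as an $\SpHQ$-module, subtract $\Img(c_2)$ to read off $\Ker(c_2)$ isotypically, and match the result with $\langle\{r_1\}\rangle_{\SpHQ}$ using Hain's Proposition \ref{prop:Hain} in the closed case.
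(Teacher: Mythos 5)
Your proposal is correct and follows essentially the same route as the paper: the same commutative square relating $b_2$ and $c_2$, the same identification of $\Img(c_2)$ with $\A^c_{2,\ev}/I^c_2$ via Proposition \ref{prop:im_bracket}, and the same strategy for the kernel (show that $I^c_2$ is hit by $b_2$ on elements of $\Ker(q)$, then invoke Lemma \ref{lem:ker_bracket} and the fact that $r_2$ dies in the quotient). The one step you defer --- realizing the two $\SpHQ$-generators of $I^c_2$ as explicit brackets with a factor in $I^c_1$ --- is exactly what the paper supplies, reusing the computations $\sum_{i\ge2}[Y(\alpha_1,\alpha_2,\beta_1),Y(\beta_i,\alpha_i,\alpha_1)]_\star$ and $[T_1]_\star$ already carried out in the proof of Proposition \ref{prop:im_bracket}, so no new calculation is needed.
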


\begin{proof}
Proposition \ref{prop:im_bracket} and the  commutative diagram
$$
\xymatrix{
{\Lambda^2 \Lambda^3 H_\Q} \ar@{->>}[d]_-{\operatorname{proj}} \ar[rr]^-{b_2=\chi_2^{-1} Y_2}
& &{\A_2^{c}} \ar@{->>}[d]^-{\operatorname{proj}} \\
{\Lambda^2\left( \Lambda^3 H_\Q/H_\Q\right)} \ar[rr]^-{c_2=\chi_2^{-1} Y_2} 
& &{\A_2^{c}/I_2^c}
}
$$
shows that the image of $c_2$ coincides with $\A_{2,\ev}^{c}/I_2^c$. 
The irreducible decomposition of $\A_{2,\ev}^{c}/I_2^c$ is deduced from the decomposition of $\A_{2,\ev}^{c}$:
$$
\begin{array}{c|c}
\hbox{eigenvalue} & \hbox{eigenvector} \\
\hline 
2 \omega_2 & \Big\{ \Hgraph{\alpha_1}{\alpha_2}{\alpha_2}{\alpha_1} \Big\} \\
\hline
0 & \{ \strutgraphbot{\omega}{\omega} \} = \big\{ \frac{g}{4}\cdot \thetagraph \big\}
\end{array}
$$

Let us now identify the kernel of $c_2$. First, we observe that 
$$
\Ker \left( \A_{2,\ev}^{c} \to \A_{2,\ev}^{c}/I_2^c \right) =
\left\langle \Ygraphbotbottop{\alpha_1}{\alpha_2}{\omega},
\strutgraphbot{\omega}{\omega} - \frac{g}{4}\cdot \thetagraph
\right\rangle_{\Sp\sHQ}.
$$ 
Moreover, the proof of Proposition \ref{prop:im_bracket} shows that
$$
\begin{array}{ll}
& b_2\left( -
\Ygraphbotbottop{\alpha_1}{\alpha_2}{\beta_1} \wedge
\strutgraphbot{\omega}{\alpha_1} \right) =
\Ygraphbotbottop{\alpha_1}{\alpha_2}{\omega},\\
\hbox{and} & b_2\left({ \displaystyle\frac{1}{g-1}
\sum_{i=1}^g \strutgraphbot{\omega}{\alpha_i} \wedge \strutgraphbot{\omega}{\beta_i}}\right) = 
\strutgraphbot{\omega}{\omega} - \frac{g}{4}\cdot \thetagraph.
\end{array}
$$
Thus, 
$$
\Ker(c_2) = \operatorname{proj}
\left(\Ker(b_2) + \left\langle 
\Ygraphbotbottop{\alpha_1}{\alpha_2}{\beta_1} \wedge \strutgraphbot{\omega}{\alpha_1},
\frac{1}{g-1}\sum_{i=1}^g \strutgraphbot{\omega}{\alpha_i} \wedge \strutgraphbot{\omega}{\beta_i}
\right\rangle_{\Sp\sHQ} \right).
$$
We conclude with Lemma \ref{lem:ker_bracket} that $\Ker(c_2)$ is $\Sp\sHQ$-spanned by $\{r_1\}$.
\end{proof}

Finally, we have the following commutative diagram:
$$
\xymatrix{
{\Gr^\Gamma_2 \Ig \oQ} \ar[rr]^-{\Gr_2 \mapcyl \oQ} && 
{\Gr^Y_2 \Cg \oQ} \ar[d]^-{\LMO_2}_-\simeq\\
{\Lie_2\left(\Lambda^3 H_\Q/H_\Q \right)/\, \langle\, \{r_1\}\, \rangle_{\SpHQ}}
\ar[u]^-{\overline{J}_2} \ar[rr]^-{\overline{Y}_2} && {\A_2^{<,c}/I_2^{<,c}}.
}
$$
The previous proposition shows that $\overline{Y}_2$ is injective, 
which implies that $\overline{J}_2$ is injective (hence Hain's Proposition \ref{prop:Hain}).
We conclude that 
$$
\Ker(Y_2) = R_2\left(\Ig\right),
$$
which is the analogue of Theorem \ref{th:degree_2} in the closed case.
Thus, the map $\Gr \mapcyl \oQ: \Gr^\Gamma \Ig \oQ \to \Gr^Y \Cg \oQ$ 
is injective in degree $2$.

\vspace{0.5cm}

\section{Remarks and questions}

\label{sec:conclusion}

In this section, we assume $g\ge6$ for simplicity.
Some statements can be adapted to more general cases.
Also, we consider only the surfaces with one boundary component, 
though most of the arguments below generalize to closed surfaces.

\subsection{The Lie subalgebra $a(\HQ)\subset \A^c\sHQ$}

\label{subsec:a_in_A}

Let  $a(\HQ)$ be the Lie subalgebra of $\A^c\sHQ$ generated by its degree
$1$ part $\Ac{1}\sHQ \simeq \LHQ$.

\begin{question}
\label{ques:quadratic_presentation}
Is the graded Lie algebra $a(\HQ)$ quadratically presented?
\end{question}

\noindent
According to Lemma \ref{lem:ker_bracket}, $a(\HQ)$ has a quadratic presentation if and only if  
the kernel of the Lie algebra map
\begin{gather*}
\chi^{-1}\circ Y \colon\Lie\left(\LHQ\right)  \longrightarrow \A^{c}\sHQ,
\quad  x\wedge y\wedge z \longmapsto \Ygraphbottoptop{x}{y}{z}
\end{gather*}
is generated as an ideal by $\langle r_1,r_2 \rangle_{\SpHQ}$.

By Theorem \ref{th:diagram}, Question \ref{ques:quadratic_presentation}  
is equivalent to the fact that the Torelli Lie algebra is quadratically presented 
(i.e$.$ the second part of Hain's Theorem \ref{th:Hain}) \emph{and} the injectivity of
$\Gr \mapcyl \oQ\colon\Gr^\Gamma\Igg\oQ\longrightarrow \Gr^Y\Cgg\oQ.$

Another very interesting problem would be to characterize the elements of $\A^c\sHQ$ belonging to $a\sHQ$.
Recall from \eqref{eq:a} that $a\sHQ$ is contained in $\A^{c}_{*,\ev}\sHQ$.

\subsection{Johnson homomorphisms and symplectic tree diagrams}

\label{subsec:tree-level}

Let us explain how the questions asked in \S~\ref{subsec:a_in_A} 
connect to problems about Johnson homomorphisms. Let
$$
\Igg = \Igg[1] \supset \Igg[2] \supset  \Igg[3] \supset \cdots
$$ 
be the Johnson filtration of the Torelli group \cite{Johnson_survey},
defined by
\begin{equation}
\label{eq:Johnson_filtration}
  \Igg[i]:=\Ker\Bigl(\Igg\longrightarrow
  \Aut(\pi_1\sgg/\Gamma_{i+1}(\pi_1\sgg))\Bigr)\quad \text{for $i\ge1$.}
\end{equation}
It is proved in \cite{Morita_Abelian} that
$$
\big[\Igg[k],\Igg[l]\big] \subset \Igg[k+l] \quad \quad \forall k,l \geq 1,
$$
hence a Lie bracket on the graded Abelian group
$$
\Gr^{[-]} \Igg = \bigoplus_{i\geq 1} \Gr^{[-]}_i\Igg,
\quad \hbox{where } \Gr^{[-]}_i \Igg := {\Igg[i]}/{\Igg[i+1]}.
$$
Also, $\textrm{id}_{\Igg}$ induces a Lie algebra homomorphism
\begin{gather*}
  j\colon\Gr^\Gamma \Igg\oQ \longrightarrow \Gr^{[-]} \Igg \oQ  .
\end{gather*}
The following problem was proposed by Morita (in the case of closed surfaces):

\begin{question}[See {\cite[Problem 6.2]{Morita_survey}}]
\label{ques:Morita}
Is the map $j$ injective in degree $\neq 2$?
\end{question}

The Johnson homomorphisms (tensored by $\Q$)\footnote{
The definition of Johnson homomorphisms needs several sign conventions.
To be specific, the homomorphism $\tau_i: \Igg[i] \to \A^{c,t}_i\sHQ$ 
that we are considering here is the 
homomorphism $\tau_i:\Igg[i] \to \A^{Y,c,t}_i(\set{g}^+\cup \set{g}^-)$ as defined in \cite{CHM},
composed with the tree-reduction of the isomorphism 
$\chi^{-1}s\varphi:\A^{Y,c}(\set{g}^+\cup \set{g}^-) \to \A^c\sHQ$.} induce
a Lie algebra monomorphism
$$
\tau: \Gr^{[-]} \Igg \oQ \longrightarrow \A^{c,t}\sHQ,
$$ 
where
\begin{gather*}
  \A^{c,t}\sHQ:=\A^c\sHQ/\text{(diagrams with loops)}  
\end{gather*}
consists of \emph{symplectic tree diagrams}.  The Lie bracket on $\A^{c,t}\sHQ$,
introduced by Kontsevich \cite{Kontsevich} and Morita \cite{Morita_Abelian},
coincides with the tree-reduction of the Lie bracket $[-,-]_\star$ on $\A^c(H_\Q)$ defined in \S~\ref{subsec:SJD}. 

As announced in \cite{Habiro} and proved in \cite{GL_tree-level,Habegger}, 
the Johnson homomorphisms naturally extend to the monoid of homology cylinders,
and one can describe these extensions using symplectic tree diagrams.
Moreover,  we have 
$$
Y_i \Cgg \subset \Cgg[i] \quad \quad \forall i\geq 1.
$$
So, $\textrm{id}_{\Cgg}$ induces a Lie algebra homomorphism
$$
k\colon\Gr^Y \Cgg \oQ \longrightarrow  \Gr^{[-]} \Cgg \oQ.
$$
Then, the commutative square (\ref{eq:diagram}) 
can be expanded to the commutative cube drawn in Figure \ref{fig:cube}.
There, $r_3 \in \Lie_2(\LHQ) \simeq \Lambda^2 \A_1^{c}\sHQ$ 
is a pre-image of the graph $\thetagraph$ by $[-,-]_\star$.
Commutativity of the rightmost square follows from 
the domination of the Johnson homomorphisms by the LMO invariant \cite{CHM}.
\begin{figure}[h!]
$$
\newdir{ >}{{}*!/-10pt/\dir{>}}
\xymatrix{
&{\Gr^\Gamma \Igg \oQ}  \ar[rr]^{\Gr \mapcyl \oQ}  \ar[dl]_{j} && 
{\Gr^Y \Cgg \oQ} \ar[dd]_-{\chi^{-1}\LMO}^-\simeq  \ar@{->>}[dl]_{k} \\
 {\Gr^{[-]} \Igg \oQ\ } \ar@{>->}[rr]^{\hspace{5em} \Gr^{[-]} \mapcyl\oQ}  &&  
{\Gr^{[-]} \Cgg \oQ} \ar[dd]^(.7)\tau_(.7)\simeq &\\
&{\frac{\Lie\left(\LHQ\right)}{\left\langle \langle r_1,r_2 \rangle_{\SpHQ} \right\rangle_{\rm{ideal}}}}
\ar@{ >->>}'[u]_(.5){\swarrow^{\rm Hain}}^(0.8){\overline{J}}[uu] 
 \ar'[r][rr]^-{\chi^{-1}\overline{Y}} \ar@{->>}[dl]_{\text{proj}} && 
{\A^c\sHQ} \ar@{->>}[dl]^{\text{proj}} \ar@/_1pc/[uu]_-{\psi\chi} \\
{\frac{\Lie\left(\LHQ\right)}
{\left\langle\langle r_1,r_2,r_3 \rangle_{\SpHQ}\right\rangle_{\rm{ideal}}}} 
\ar[uu] \ar[rr] &&  {\A^{c,t}\sHQ} & \\
}
$$
\caption{A cube of graded Lie algebras with $\SpHQ$-actions.}
\label{fig:cube}
\end{figure}

Let  $a^t(\HQ)$ be the Lie subalgebra of $\A^{c,t}(H_\Q)$ generated 
by its degree $1$ part $\A^{c,t}_1(H_\Q) \simeq \LHQ$.
Question \ref{ques:quadratic_presentation} has the following analogue at the ``tree'' level.

\begin{question}
\label{ques:quadratic_presentation_tree}
Is the graded Lie algebra $a^t(\HQ)$  quadratically presented?
\end{question}

\noindent
By  Lemma \ref{lem:ker_bracket}, $a^t(\HQ)$ has a quadratic presentation if and only if  
the kernel of the Lie algebra map
$$
\Lie\left(\LHQ\right) \longrightarrow \A^{c,t}\sHQ, \quad 
x\wedge y\wedge z \longmapsto \Ygraphbottoptop{x}{y}{z}
$$
is generated as an ideal by $\langle r_1,r_2,r_3 \rangle_{\Sp\sHQ}$.

The Lie subalgebra $a^t(\HQ)$ is equal to the image of the Johnson homomorphisms:
\begin{gather*}
  a^t(\HQ) = \Img(\tau: \Gr^{[-]} \Igg \oQ \longrightarrow \A^{c,t}\sHQ).
\end{gather*}
We deduce that $a(H_\Q)$ is strictly contained in $\A^{c}_{*,\ev}(H_\Q)$, 
since $\tau$ is not surjective, as was shown by Morita \cite{Morita_Abelian}. 

\begin{remark}
We can verify Question \ref{ques:quadratic_presentation} in degree $3$, with a long computation by hand.
Since the projection $\A^{c}_{3,\ev}(H_\Q) \to \A^{c,t}_3(H_\Q)$ is 
bijective, it follows that Question \ref{ques:quadratic_presentation_tree} has a positive answer in degree $3$.
This fact might be well-known to experts (though it does not seem to have been proved or
announced explicitly in the literature), since a version for closed surfaces
is already proved by  Morita \cite[Proposition 6.3]{Morita_survey}.  

Moreover, we deduce that  $r_3$ is central in the Lie algebra
$$
\Lie\left(\LHQ\right)/\left\langle \langle r_1,r_2 \rangle_{\Sp\sHQ} \right\rangle_{\rm{ideal}},
$$ 
so that the projection
$$
\Lie\left(\LHQ\right)/\left\langle \langle r_1,r_2 \rangle_{\Sp\sHQ} \right\rangle_{\rm{ideal}}
\longrightarrow
\Lie\left(\LHQ\right)/\left\langle \langle r_1,r_2,r_3 \rangle_{\Sp\sHQ} \right\rangle_{\rm{ideal}}
$$ 
is bijective in all degree $\neq2$.  
Then, the commutative cube of Figure \ref{fig:cube}
shows that Question \ref{ques:quadratic_presentation_tree} implies 
Questions \ref{ques:quadratic_presentation} and \ref{ques:Morita}.  
Also, giving a positive answer to Question \ref{ques:quadratic_presentation_tree} by
algebraic means would provide a new, algebraic proof for the second half
of Hain's Theorem \ref{th:Hain}.
\end{remark}

\vspace{0.5cm}

\appendix

\section{On Malcev Lie algebras of filtered groups}

In this appendix, we define the ``Malcev completion'' and the ``Malcev Lie algebra'' of a filtered group. 
When the filtration is the lower central series,  
our definitions agree with the usual constructions of the Malcev completion 
and Malcev Lie algebra  of a group.
Our exposition is inspired by Quillen's appendix to his paper \cite{Quillen},
which deals with the usual case.
We fix a ground field $\K$ of characteristic $0$.
We start by recalling some general definitions.\\

A \emph{filtration} on a vector space $V$ is a descending chain of subspaces
$$
V= F_0 V \supset F_1 V \supset F_2 V \supset \cdots.
$$
A \emph{complete vector space} is a filtered vector space $V$
such that $V \simeqto \varprojlim_{i} V/F_i V$.
The \emph{complete tensor product} $V\widehat{\otimes} W$ 
of two filtered vector spaces $V$ and $W$ is the completion 
of $V\otimes W$ with respect to the filtration
$$
F_n (V \otimes W) := \sum_{i+j=n} F_i V \otimes F_j W.
$$
The category of complete vector spaces endowed with $\widehat{\otimes}$ 
is a symmetric monoidal category.
The unit object is $\K$ filtered by $\K=F_0 \K \supset F_1\K =\{0\}$,
and the transpose map $V \widehat{\otimes} W \to W \widehat{\otimes} V$ is the completion
of the usual transpose map $V \otimes W \to W \otimes V$.

\begin{definition}
A \emph{complete Hopf algebra} is a Hopf algebra 
in the symmetric monoidal category of complete vector spaces.
\end{definition}

\begin{remark}
The reader is referred to \cite[Appendix A]{Quillen} for a detailed
treatment of complete Hopf algebras.  It should be
observed that Quillen's definition of a ``complete Hopf algebra''
is more restrictive than ours, since he requires the associated graded
algebra to be generated by its degree $1$ subspace.  Consequently,
some of the results of \cite{Quillen} do not directly generalize to
our situation.
\end{remark}
 
A \emph{filtration}\footnote{Originally termed ``N-series'' by Lazard \cite{Lazard}.} 
of a group $G$ is a descending chain of subgroups
$$
G=F_1 G \supset F_2 G \supset F_3 G \supset \cdots
$$
such that $\left[F_i G,F_j G\right] \subset F_{i+j} G$.
A \emph{filtration} of a Lie algebra is defined in a similar way.

\begin{example}
The lower central series  of a group $G$ is a filtration of $G$,
and similarly for a Lie algebra.
\end{example}

In the sequel, we consider a filtered group $G$.
Let $\K[G]$ be its group algebra, and let $I$ be the augmentation ideal of $\K[G]$.
The given filtration on $G$ induces a filtration on the algebra $\K[G]$ by defining
\begin{equation}
\label{eq:filtration_group_algebra}
F_i \K[G] :=  \left\langle\ \left(g_1-1\right) \cdots \left(g_r-1\right) \
\left| \begin{array}{l} g_1 \in F_{k_1} G,\dots, g_r \in F_{k_r} G \\
 k_1+ \cdots + k_r \geq i \end{array} \right.\ \right\rangle_\K.
\end{equation}

\begin{example}
If $G$ is filtered by the lower central series, 
then the filtration (\ref{eq:filtration_group_algebra}) 
is the $I$-adic filtration $(I^k)_{k\geq 0}$ of $\K[G]$.
\end{example}

The completion of $\K[G]$ with respect to the filtration (\ref{eq:filtration_group_algebra}) is denoted by
$$
\widehat{\K}[G] := \varprojlim_{i} \K[G]/ F_i \K[G].
$$
The filtration on $\K[G]$ induces a filtration on $\widehat{\K}[G]$ defined by
$$
F_j  \widehat{\K}[G] := \varprojlim_{i\geq j} F_j \K[G]/ F_i \K[G].
$$
The group Hopf algebra structure on $\K[G]$ induces a cocommutative, complete
Hopf algebra structure on $\widehat{\K}[G]$.

It follows that $\Gr \widehat{\K}[G] \simeq \Gr \K[G]$ is a graded cocommutative Hopf algebra.
A result by Quillen describes it as the universal enveloping algebra of a graded Lie algebra.
See \cite{Quillen_graded} in the case of the lower central series 
and \cite{Massuyeau} in the general case:

\begin{theorem}
\label{th:graded_Quillen}
Let $\Gr G$ be the graded Lie algebra over $\Z$ induced by the filtration of $G$, namely
$$
\Gr G = \bigoplus_{i\geq 1} F_i G/ F_{i+1} G
$$
with Lie bracket induced by commutator in $G$.
Then, there is a graded Hopf algebra isomorphism
$$
\theta: U(\Gr G \otimes \K) \simeqto \Gr \K[G]
$$
defined by $\{g\}\otimes 1 \mapsto \{g-1\}$ for all $g\in F_iG$ and for all $i\geq 1$.
\end{theorem}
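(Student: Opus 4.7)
The plan is as follows. First, I would verify that the assignment $\{g\}\otimes 1\mapsto \{g-1\}$ yields a well-defined degree-zero linear map $\theta_1:\Gr G\otimes\K\to\Gr\K[G]$. Additivity on each graded piece uses the identity $gh-1=(g-1)+(h-1)+(g-1)(h-1)$ combined with $(g-1)(h-1)\in F_{2i}\K[G]\subset F_{i+1}\K[G]$ for $g,h\in F_iG$. Compatibility with the Lie bracket reduces to the congruence
\begin{equation*}
[g,h]-1\;\equiv\; (g-1)(h-1)-(h-1)(g-1)\;\pmod{F_{i+j+1}\K[G]}
\end{equation*}
for $g\in F_iG$ and $h\in F_jG$, which follows by writing $gh-hg=([g,h]-1)\cdot hg$ and observing that $hg-1\in F_1\K[G]$ while $[g,h]-1\in F_{i+j}\K[G]$, so the product $([g,h]-1)(hg-1)$ sits in $F_{i+j+1}\K[G]$.

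Next, by the universal property of the universal enveloping algebra, $\theta_1$ extends uniquely to a graded algebra map $\theta:U(\Gr G\otimes\K)\to\Gr\K[G]$. This $\theta$ is a Hopf algebra homomorphism because each $\{g-1\}$ is primitive in $\Gr\K[G]$: indeed $\Delta(g-1)=(g-1)\otimes 1+1\otimes(g-1)+(g-1)\otimes(g-1)$, and the cross term lives in degree $\ge 2i$ of the tensor product filtration, so it vanishes in the degree-$i$ part of $\Gr(\K[G]\otimes\K[G])$. Surjectivity is then immediate from the definition \eqref{eq:filtration_group_algebra}: modulo $F_{i+1}\K[G]$, the subspace $F_i\K[G]$ is spanned by monomials $(g_1-1)\cdots(g_r-1)$ with $g_\ell\in F_{k_\ell}G$ and $\sum k_\ell=i$, all of which lie in the image of $\theta$ by construction.

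The main obstacle is injectivity, which is a Poincar\'e--Birkhoff--Witt-type statement for filtered groups. The plan is to choose an ordered $\Q$-basis $(e_\alpha)_{\alpha\in A}$ of $\bigoplus_{i\ge 1}(F_iG/F_{i+1}G)\otimes\Q$ compatible with the grading, lift each $e_\alpha$ to $g_\alpha\in F_{i_\alpha}G$, and prove that the classes $\{(g_{\alpha_1}-1)\cdots(g_{\alpha_r}-1)\}$ with $\alpha_1\le\cdots\le\alpha_r$ form a $\K$-basis of $\Gr\K[G]$. By PBW, the ordered monomials in the $e_\alpha$ already form a $\K$-basis of $U(\Gr G\otimes\K)$, so injectivity of $\theta$ reduces to linear independence of these classes in $\Gr\K[G]$. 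Spanning follows by induction on $i$: the bracket congruence from the first step lets one commute adjacent factors in an arbitrary monomial at the cost of strictly shorter terms modulo higher filtration. The genuinely difficult step is linear independence, which amounts to establishing a unique ``ordered normal form'' for elements of $\K[G]$ modulo each $F_i\K[G]$. I would attempt this by induction, using the characteristic-zero hypothesis to lift $\Q$-bases past any torsion in the quotients $F_iG/F_{i+1}G$. A conceivable alternative is to reduce to Quillen's original result \cite{Quillen_graded} for the lower central series by constructing an auxiliary filtered group whose lower central series realizes $F_\bullet G$, but making such a comparison precise is itself non-trivial in the generality allowed here.
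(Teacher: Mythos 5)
Your construction of $\theta$ is correct and matches the standard (easy) half of the argument: the identity $gh-1=(g-1)+(h-1)+(g-1)(h-1)$, the commutator congruence, the primitivity of the classes $\{g-1\}$, and surjectivity from the definition of $F_i\K[G]$ are all fine. Note, however, that the paper does not actually prove this theorem: it defers to \cite{Quillen_graded} for the lower central series and to \cite{Massuyeau} for general filtrations, so there is no internal proof to compare against.

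The genuine gap is injectivity, which you correctly identify as the substantive content but leave as an unexecuted plan. Your ``ordered normal form'' strategy buries the entire difficulty in the linear-independence claim: already for length-one monomials it asserts that if $g\in F_iG$ satisfies $g-1\in F_{i+1}\K[G]$ then $\{g\}\otimes 1=0$ in $(F_iG/F_{i+1}G)\otimes\Q$, and this is precisely the dimension-subgroup theorem $D_i(G)=\sqrt{F_iG}$ over $\Q$ (Jennings/Hall for the lower central series, extended to arbitrary N-series in \cite{Massuyeau}; the present paper invokes exactly this fact in the proof of Theorem \ref{th:divisible_closure}). It is not something one can establish by a formal induction on ordered monomials, since a priori nothing prevents $g-1$ from dropping into higher filtration for reasons invisible in $\Gr G$. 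The known route is: (i) observe that $\Gr\K[G]$ is a graded connected cocommutative Hopf algebra over a field of characteristic zero, so by Milnor--Moore it is the enveloping algebra of its primitives, and by Poincar\'e--Birkhoff--Witt the injectivity of $\theta$ reduces to the injectivity of the degree-wise map $\Gr G\otimes\K\to\Prim(\Gr\K[G])$; (ii) deduce that injectivity from $D_i(G)=\sqrt{F_iG}$. Without supplying (ii) --- or carrying out the auxiliary-group reduction to Quillen's case, which you acknowledge you have not done --- the proof is incomplete.
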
 

The complete vector space $\widehat{\K}[G]$ being a complete Hopf algebra,
we can consider the primitive and the group-like elements of $\widehat{\K}[G]$.

\begin{definition}
The \emph{Malcev completion} of the filtered group $G$ 
is the group of group-like elements of $\widehat{\K}[G]$
$$
\MalcevGroup{G} := \GLike \widehat{\K}[G] =
\left\{x \in \widehat{\K}[G]: \widehat\Delta x= x \widehat{\otimes} x, x\neq 0 \right\}.
$$
filtered by 
$$
F_i \MalcevGroup{G} := \MalcevGroup{G} \cap \left(1+ F_i \widehat{\K}[G]\right), \quad  \forall i\geq 1.
$$
The \emph{Malcev Lie algebra} of $G$ is the Lie algebra of primitive elements of $\widehat{\K}[G]$
$$
\MalcevLie{G} := \Prim  \widehat{\K}[G] = \left\{x \in \widehat{\K}[G]: 
\widehat\Delta x= x \widehat{\otimes} 1 + 1 \widehat{\otimes} x \right\}
$$
filtered by 
$$
F_i \MalcevLie{G} := \MalcevLie{G} \cap F_i \widehat{\K}[G], \quad  \forall i\geq 1.
$$
\end{definition}

\noindent
The canonical map $G\to \MalcevGroup{G}$ is denoted by $\iota$.
It should be observed that $\iota$ is not necessarily injective.

\begin{example}
When $G$ is filtered by the lower central series, 
$\MalcevGroup{G}$ and $\MalcevLie{G}$ are the usual notions 
of Malcev completion and Malcev Lie algebra respectively.
This construction, via the completed group algebra, 
is due to Jennings \cite{Jennings} and Quillen \cite{Quillen}.
\end{example}

As is always true in a complete Hopf algebra, the primitive and the group-like elements
are in one-to-one correspondence by the exponential and logarithmic maps:
$$
\MalcevGroup{G} \subset 1 + \widehat{I} 
\overset{\log}{\underset{\exp}{\longrightleftarrows}} \widehat{I} \supset \MalcevLie{G}.
$$
Here, $\widehat{I} \subset\widehat{\K}[G] $ denotes the completion of the augmentation ideal $I\subset \K[G]$.
The $\log$ and $\exp$ series converge on $1 + \widehat{I}$
and $\widehat{I}$ respectively since $I^k \subset F_k \K[G]$ for all $k\geq 1$.\\

In the rest of this appendix, we extend two results, which are well-known
for the lower central series, to arbitrary group filtrations.
The first one is the following

\begin{theorem}
\label{th:graded_Malcev_Lie_algebra}
The logarithmic map $\log \iota: G \to \MalcevLie{G}$ induces a  graded Lie algebra isomorphism:
$$
(\Gr \log  \iota) \otimes \K: \Gr G \otimes \K  \simeqto \Gr \MalcevLie{G}.
$$
\end{theorem}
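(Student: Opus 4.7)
The plan is to factor $(\Gr\log\iota)\otimes\K$ through the primitive part of the associated graded Hopf algebra $\Gr\widehat{\K}[G]$ and identify both factors. First, I would verify that $\log\iota$ is filtration-preserving: for $g\in F_iG$, one has $g-1\in F_i\K[G]$ by the very definition of the algebra filtration, and $(g-1)^k\in F_{ki}\widehat{\K}[G]$ by multiplicativity, hence $\log\iota(g)=(g-1)-\tfrac{1}{2}(g-1)^2+\cdots$ lies in $F_i\MalcevLie{G}$ and satisfies $\log\iota(g)\equiv g-1\pmod{F_{i+1}\widehat{\K}[G]}$. The Baker--Campbell--Hausdorff formula inside $\widehat{\K}[G]$, together with the inclusions $[F_i,F_j]\subset F_{i+j}$ in $\MalcevLie{G}$, then makes $\Gr\log\iota$ a homomorphism of graded Lie algebras, which extends $\K$-linearly to a graded Lie algebra map $(\Gr\log\iota)\otimes\K\colon\Gr G\otimes\K\to\Gr\MalcevLie{G}$.

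The core of the argument is the following commutative triangle. Since $F_i\MalcevLie{G}=\MalcevLie{G}\cap F_i\widehat{\K}[G]$, the inclusion $\MalcevLie{G}\hookrightarrow\widehat{\K}[G]$ is strict with respect to filtrations and therefore induces an injection $\gamma\colon\Gr\MalcevLie{G}\hookrightarrow\Gr\widehat{\K}[G]$; the fact that the coproduct $\widehat{\Delta}$ is filtration-preserving ensures that the image of $\gamma$ is contained in $\Prim\Gr\widehat{\K}[G]$. Using the congruence $\log\iota(g)\equiv g-1\pmod{F_{i+1}}$, the composite $\gamma\circ\bigl((\Gr\log\iota)\otimes\K\bigr)$ sends the class of $g\in F_iG$ to the class of $g-1$ in $\Gr_i\widehat{\K}[G]$. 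By Theorem~\ref{th:graded_Quillen}, this composite is precisely the restriction to $\Gr G\otimes\K\subset U(\Gr G\otimes\K)$ of Quillen's isomorphism of graded Hopf algebras $\theta\colon U(\Gr G\otimes\K)\simeqto\Gr\widehat{\K}[G]$.

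Finally, the Milnor--Moore theorem asserts that, in characteristic zero, the primitive part of the graded cocommutative connected Hopf algebra $U(\Gr G\otimes\K)$ coincides with its Lie algebra of generators $\Gr G\otimes\K$. Transporting along $\theta$, the restriction $\theta|_{\Gr G\otimes\K}\colon\Gr G\otimes\K\to\Prim\Gr\widehat{\K}[G]$ is therefore a Lie algebra isomorphism. Since this restriction equals $\gamma\circ\bigl((\Gr\log\iota)\otimes\K\bigr)$, the composite is an isomorphism; and since $\gamma$ is injective, an elementary diagram chase then forces both $\gamma$ and $(\Gr\log\iota)\otimes\K$ to be isomorphisms of graded Lie algebras. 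No substantial obstacle lies beyond this bookkeeping once Theorem~\ref{th:graded_Quillen} and Milnor--Moore are in hand; the pivot of the whole argument is the elementary identity $\{\log g\}=\{g-1\}$ in $\Gr_i\widehat{\K}[G]$ for $g\in F_iG$, which holds because $(g-1)^k\in F_{ki}\subset F_{i+1}$ for $k\ge2$.
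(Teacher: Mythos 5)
Your proposal is correct and follows essentially the same route as the paper: check that $\log\iota$ is filtration-preserving and a Lie map via Baker--Campbell--Hausdorff, then factor $(\Gr\log\iota)\otimes\K$ through the injective canonical map $\Gr\Prim\widehat{\K}[G]\to\Prim\Gr\K[G]$ (your $\gamma$ is the paper's $b$) and identify the composite with the restriction of Quillen's isomorphism $\theta$. The paper is slightly terser (it leaves implicit the congruence $\{\log g\}=\{g-1\}$ and the Milnor--Moore identification of $\Prim U(\Gr G\otimes\K)$ that you spell out), but the argument is the same.
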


\begin{proof}
First of all, $\log(\iota(g)) = \log(1+(\iota g-1))= (\iota g-1) - (\iota g-1)^2/2 + \cdots$
belongs to $F_i \widehat{\K}[G]$ for all $g\in F_i G$ and for all $i\geq 1$.
Thus, the map $\log \iota:G \to \MalcevLie{G}$ preserves the filtration 
so that $\Gr \log \iota : \Gr G   \to \Gr \MalcevLie{G}$ is well-defined. 
The latter preserves the Lie brackets by the Baker--Campbell--Hausdorff formula.
It remains to prove that the graded Lie algebra map 
$(\Gr \log \iota)\otimes \K : \Gr G \otimes \K   \to \Gr \MalcevLie{G}$ is bijective.

Quillen's isomorphism $\theta$ (Theorem \ref{th:graded_Quillen})
gives a graded Lie algebra isomorphism at the level of primitive elements:
$$
\theta:\Gr G \otimes \K \simeqto \Prim \Gr \K[G].
$$ 
Moreover, there is the canonical Lie algebra homomorphism
$$
b: \Gr \MalcevLie{G} = \Gr \Prim \widehat{\K}[G] \longrightarrow 
\Prim \Gr \widehat{\K}[G] \simeq \Prim \Gr \K[G].
$$ 
The map $b$ is injective since the filtration that we are considering on $\MalcevLie{G}$ 
is the restriction of the filtration on $\widehat{\K}[G]$.
We conclude thanks to the following commutative triangle:
$$
\xymatrix{
{\Gr G \otimes \K} \ar[rrd]_-\theta^-\simeq \ar[rr]^-{(\Gr \log \iota)\otimes \K} 
& &{\Gr \MalcevLie{G}} \ar@{>->}[d]^-b \\
& &{\Prim \Gr \K[G]}.
}
$$
\end{proof}

The usual form of Theorem \ref{th:graded_Malcev_Lie_algebra} is as follows:

\begin{corollary}
\label{cor:graded_Malcev_Lie_algebra}
If the group $G$ is filtered by the lower central series,
then the logarithmic map $\log \iota: G \to \MalcevLie{G}$ induces a graded Lie algebra isomorphism
$$
(\Gr \log \iota)\otimes \K: \Gr G \otimes \K \simeqto \Gr^{\widehat{\Gamma}} \MalcevLie{G},
$$
where $\widehat{\Gamma}$ is the complete lower central series\footnote{
The \emph{complete lower central series} of a complete Lie algebra $L$
is the filtration $\widehat{\Gamma}$ whose $i$-th term  $\widehat{\Gamma}_i L$
is the closure of $\Gamma_i L$ in $L$.} of $\MalcevLie{G}$.
\end{corollary}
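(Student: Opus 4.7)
The plan is to deduce this corollary from Theorem \ref{th:graded_Malcev_Lie_algebra} by identifying, for $G$ filtered by its lower central series $\Gamma$, the filtration $F$ on $\MalcevLie{G}$ inherited from $\widehat{\K}[G]$ with the complete lower central series $\widehat{\Gamma}$ of $\MalcevLie{G}$. Once $F=\widehat{\Gamma}$ is established, the graded isomorphism $(\Gr \log \iota)\otimes \K : \Gr G \otimes \K \simeqto \Gr^F \MalcevLie{G}$ supplied by Theorem \ref{th:graded_Malcev_Lie_algebra} is exactly the desired isomorphism.

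First I would record the input from Theorem \ref{th:graded_Malcev_Lie_algebra}: the graded Lie algebra $\Gr^F \MalcevLie{G}$ is isomorphic to $\Gr G \otimes \K$. Since $G$ is filtered by its lower central series, the classical fact that $\Gr^\Gamma G$ is generated as a Lie algebra by its degree $1$ part $G/[G,G]$ transfers to $\Gr^F \MalcevLie{G}$, which is then generated (as a graded Lie algebra) in degree $1$. This structural fact will be the crucial lever.

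Next I would verify the easy inclusion $\widehat{\Gamma}_i \MalcevLie{G} \subset F_i \MalcevLie{G}$. Every primitive element has vanishing counit, hence lies in $\widehat{I} = F_1 \widehat{\K}[G]$, so $F_1 \MalcevLie{G} = \MalcevLie{G}$. Formula \eqref{eq:filtration_group_algebra} makes $F$ a multiplicative filtration on $\widehat{\K}[G]$, whence $[F_j \MalcevLie{G}, F_k \MalcevLie{G}] \subset F_{j+k}\MalcevLie{G}$ by the commutator formula $[x,y]=xy-yx$. An induction then gives $\Gamma_i \MalcevLie{G} \subset F_i \MalcevLie{G}$ for all $i$; because $F_i \MalcevLie{G}$ is closed (it is $\MalcevLie{G}$ intersected with the closed subspace $F_i \widehat{\K}[G]$), taking closures yields $\widehat{\Gamma}_i \MalcevLie{G} \subset F_i \MalcevLie{G}$.

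The main obstacle is the reverse inclusion $F_i \MalcevLie{G} \subset \widehat{\Gamma}_i \MalcevLie{G}$, which I would handle by a successive-approximation argument exploiting completeness. Given $x \in F_i \MalcevLie{G}$, the fact that $\Gr^F \MalcevLie{G}$ is generated in degree $1$ means that the class of $x$ in $F_i / F_{i+1}$ is a finite $\K$-linear combination of classes of $i$-fold Lie brackets of elements of $F_1 \MalcevLie{G} = \MalcevLie{G}$. Lifting and taking the corresponding iterated brackets produces $y_0 \in \Gamma_i \MalcevLie{G}$ with $x - y_0 \in F_{i+1} \MalcevLie{G}$. Iterating, I obtain elements $y_k \in \Gamma_{i+k}\MalcevLie{G}$ with $x - (y_0 + \cdots + y_N) \in F_{i+N+1}\MalcevLie{G}$ for all $N$. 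Since $\MalcevLie{G}$ is closed in the complete space $\widehat{\K}[G]$, it is itself complete for $F$, so $\sum_k y_k$ converges to $x$; as the partial sums belong to $\Gamma_i \MalcevLie{G}$, the limit $x$ belongs to its closure $\widehat{\Gamma}_i \MalcevLie{G}$. This completes the identification $F=\widehat{\Gamma}$ and hence the corollary.
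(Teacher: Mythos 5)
Your proposal is correct. It shares the paper's overall skeleton --- everything is reduced to Theorem \ref{th:graded_Malcev_Lie_algebra} by identifying the filtration $F$ on $\MalcevLie{G}$ with $\widehat{\Gamma}$, the easy inclusion $\widehat{\Gamma}_i \subset F_i$ is obtained the same way, and the final step (``$F_n \subset \widehat{\Gamma}_n + F_{n+1}$ for all $n$, hence $F_n$ lies in the closure of $\widehat{\Gamma}_n$, which is already closed'') is identical --- but the mechanism you use for the crucial inclusion is genuinely different. The paper first invokes the Baker--Campbell--Hausdorff formula to check that $\log\iota$ carries $\Gamma_i G$ into $\widehat{\Gamma}_i\MalcevLie{G}$ and preserves brackets on the associated graded, so that the isomorphism of Theorem \ref{th:graded_Malcev_Lie_algebra} factors through the canonical comparison map $c\colon \Gr^{\widehat{\Gamma}}\MalcevLie{G} \to \Gr \MalcevLie{G}$; surjectivity of $c$ then yields $F_n \subset \widehat{\Gamma}_n + F_{n+1}$. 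You instead transfer the degree-one generation of $\Gr^{\Gamma}G$ across the isomorphism of Theorem \ref{th:graded_Malcev_Lie_algebra} to conclude that $\Gr^F \MalcevLie{G}$ is generated in degree one, and then lift iterated brackets by successive approximation, producing representatives in $\Gamma_i\MalcevLie{G}$ itself. Your route avoids the BCH computation entirely (well-definedness and bracket-preservation of the induced map come for free once $F=\widehat{\Gamma}$ is known), at the cost of invoking the classical fact that the graded Lie algebra of the lower central series is generated in degree one --- a fact which is, of course, specific to the lower central series and is exactly what makes the corollary a special case of the theorem. Both arguments are sound; the paper's has the small advantage of exhibiting the induced map on $\Gr^{\widehat{\Gamma}}$ explicitly before the filtrations are identified, while yours is more economical in its inputs.
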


\begin{proof}[Proof of Corollary \ref{cor:graded_Malcev_Lie_algebra}]
The Baker--Campbell--Hausdorff formula shows that the map $\log \iota: G \to \MalcevLie{G}$
sends the lower central series of $G$ to the complete lower central series of $\MalcevLie{G}$, so that 
$\Gr \log \iota: \Gr G  \to \Gr^{\widehat{\Gamma}} \MalcevLie{G}$ is well-defined.
The same formula shows that $\Gr \log \iota$ preserves the Lie brackets.

Since $\Gamma_n \MalcevLie{G} \subset F_n \MalcevLie{G}$, 
we have $\widehat{\Gamma}_{n} \MalcevLie{G} \subset F_n \MalcevLie{G}$
hence a canonical map $c:\Gr^{\widehat \Gamma} \MalcevLie{G} \to \Gr \MalcevLie{G}$.
The commutative triangle
$$
\xymatrix{
\Gr G \otimes \K \ar[rr]^-{(\Gr \log \iota)\otimes \K} 
\ar[drr]_-{(\Gr \log \iota)\otimes \K\quad }^-\simeq  
&& \Gr^{\widehat{\Gamma}} \MalcevLie{G} \ar[d]^c \\
&& \Gr \MalcevLie{G} 
}
$$
shows that $c$ is surjective. So, we have 
$$
F_n \MalcevLie{G} \subset 
\left(\widehat{\Gamma}_n \MalcevLie{G} + F_{n+1} \MalcevLie{G}\right), \quad \forall n\geq 1.
$$ 
We deduce that $F_n \MalcevLie{G}$ is contained 
in the closure of $\widehat{\Gamma}_n \MalcevLie{G}$,
so that $F_n \MalcevLie{G} = \widehat{\Gamma}_n \MalcevLie{G}$. The conclusion follows.
\end{proof}

To state the second result, we shall recall that, classically, 
the Malcev completion of a nilpotent group refers to its uniquely divisible closure. 
A \emph{uniquely divisible closure} of a nilpotent group $N$ is a pair $(D,i)$, where 
\begin{itemize}
\item $D$ is nilpotent and is uniquely divisible: 
$\forall y\in D, \forall k\geq 1, \exists ! x\in D, x^k =y$,
\item $i:N\to D$ is a group homomorphism whose kernel is the torsion subgroup of $N$,
\item $\forall x\in D, \exists k\geq 1, x^k \in i(N)$.
\end{itemize}
Malcev proved that the uniquely divisible closure of a nilpotent group $N$
exists and is essentially unique (see \cite{KM} for instance): 
Let us denote it by $N\oQ$.
As proved by Jennings in the finitely generated case \cite{Jennings}
and by Quillen in general \cite{Quillen}, 
the group $\MalcevGroup{N}$, where $N$ is filtered by the lower central series and where $\K=\Q$,
is a realization of $N\oQ$. This fact can be generalized as follows:

\begin{theorem}
\label{th:divisible_closure}
If the ground field is $\K=\Q$, then the canonical map $\iota:G \to \MalcevGroup{G}$ induces a group isomorphism
$$
\iota: \varprojlim_{i} \left( \left(G/F_i G\right) \oQ \right)\simeqto \MalcevGroup{G}.
$$
\end{theorem}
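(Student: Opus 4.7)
My plan is to show that $\MalcevGroup{G}$ is complete for its filtration and that each nilpotent quotient $\MalcevGroup{G}/F_j\MalcevGroup{G}$ is canonically isomorphic to the classical uniquely divisible closure $(G/F_jG)\oQ$; the theorem will then follow by passage to the inverse limit. Set $N_j := G/F_jG$; since $F$ is a filtration, $\Gamma_k N_j \subset F_kG/F_jG = \{1\}$ for $k \geq j$, so $N_j$ is nilpotent of class at most $j-1$. Completeness of $\MalcevGroup{G}$ in its filtration follows from the completeness of $\widehat{\K}[G]$: any compatible system in $\varprojlim_j \MalcevGroup{G}/F_j\MalcevGroup{G}$ lifts to a Cauchy sequence in $\widehat{\K}[G]$ whose limit is group-like by continuity of $\widehat\Delta$.

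For each $j$, the Lie algebra $\MalcevLie{G}/F_j\MalcevLie{G}$ is a nilpotent $\Q$-Lie algebra, so via the exponential bijection $\MalcevGroup{G}/F_j\MalcevGroup{G} \cong \exp(\MalcevLie{G}/F_j\MalcevLie{G})$ is a uniquely divisible nilpotent group. The composition $G \to \MalcevGroup{G} \to \MalcevGroup{G}/F_j\MalcevGroup{G}$ kills $F_jG$, factors through $N_j$, and extends uniquely (by unique divisibility of the target) to a homomorphism $\phi_j\colon (N_j)\oQ \to \MalcevGroup{G}/F_j\MalcevGroup{G}$. Assembling yields the comparison map $\phi := \varprojlim_j \phi_j$, and it remains to show each $\phi_j$ is an isomorphism.

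I pass to the Lie algebra side. The projection $G \twoheadrightarrow N_j$ is a morphism of filtered groups when $N_j$ carries its induced filtration $\tilde F$, and thus induces a filtered Lie algebra homomorphism $\MalcevLie{G} \to \MalcevLie{N_j}$. By Theorem \ref{th:graded_Malcev_Lie_algebra} applied to both $G$ and $N_j$, its associated graded is the natural projection $\Gr G \oQ \twoheadrightarrow \Gr N_j \oQ$, which is surjective with kernel concentrated in degrees $\geq j$; a standard argument using completeness and Hausdorffness then gives $\MalcevLie{G}/F_j\MalcevLie{G} \simeqto \MalcevLie{N_j}$. It remains to identify $\MalcevLie{N_j}$ with the classical Malcev Lie algebra $\mathfrak g(N_j)$. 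For this I show that the $\tilde F$-adic and $I$-adic topologies on $\K[N_j]$ coincide: the inclusion $I^k\subset \tilde F_k\K[N_j]$ is immediate, and conversely each generator $(g_1-1)\cdots(g_r-1)$ of $\tilde F_k\K[N_j]$ has factors with $g_l\in \tilde F_{k_l}N_j$ and $k_l<j$ (else $g_l=1$), whence $r(j-1)\geq \sum k_l\geq k$ and so $\tilde F_k\K[N_j]\subset I^{\lceil k/(j-1)\rceil}$. Therefore $\widehat{\K}[N_j]$ agrees with the classical $I$-adic completion, and Jennings--Quillen yields $\MalcevLie{N_j}\cong \mathfrak g(N_j)$, hence $\MalcevGroup{G}/F_j\MalcevGroup{G}\cong (N_j)\oQ$. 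Passing to the inverse limit concludes.

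The main obstacle is the identification $\MalcevLie{N_j}\cong \mathfrak g(N_j)$: it cannot be deduced purely formally from Theorem \ref{th:graded_Malcev_Lie_algebra} but requires both the topology-equivalence lemma above and the classical Jennings--Quillen theorem as a black box, which is where the hypothesis $\K=\Q$ plays its essential role.
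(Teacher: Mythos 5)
Your argument is correct, but it follows a genuinely different route from the paper's. The paper proves the theorem by verifying directly that each quotient $\MalcevGroup{G}/F_i\MalcevGroup{G}$ is the uniquely divisible closure of $G/\sqrt{F_iG}$: injectivity of the comparison map comes from the generalized dimension-subgroup theorem $D_i(G)=\sqrt{F_iG}$ (quoted from \cite{Massuyeau}), unique divisibility of the quotient comes from the exponential much as in your argument, and the cofinality property $\forall x\in\MalcevGroup{G},\ \exists k\ge1,\ x^k\in\iota(G)\cdot F_i\MalcevGroup{G}$ is established by induction on $i$ using Theorem \ref{th:graded_Malcev_Lie_algebra}. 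You instead reduce to the classical Jennings--Quillen theorem for the nilpotent quotients $N_j=G/F_jG$, via two observations: the induced filtration and the $I$-adic filtration on $\Q[N_j]$ are commensurable (your inequality $\tilde F_k\subset I^{\lceil k/(j-1)\rceil}$), so the completed group algebras agree; and $\MalcevLie{G}/F_j\MalcevLie{G}\cong\MalcevLie{N_j}$ by a filtered-graded argument resting on the same Theorem \ref{th:graded_Malcev_Lie_algebra}. Your route bypasses the dimension-subgroup theorem as an explicit input (its content for nilpotent groups is absorbed into the Jennings--Quillen black box) and makes transparent that the general statement is a formal consequence of the nilpotent case; the paper's route is self-contained modulo \cite{Massuyeau} and identifies each quotient concretely as $(G/\sqrt{F_iG})\oQ$, hence pins down exactly which elements of $G$ die in the Malcev completion. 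Two points you should spell out: that $F_j\MalcevLie{N_j}=0$ (it follows from the vanishing of $\Gr_kN_j$ for $k\ge j$ together with Hausdorffness of the filtration on $\widehat{\K}[N_j]$, and is needed for your map to factor through the quotient), and that the isomorphism you build on the Lie-algebra side is compatible with the canonical maps from $N_j$, so that the uniqueness clause in the universal property of $(N_j)\oQ$ really does identify it with $\phi_j^{-1}$ before you pass to the inverse limit.
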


\begin{proof}
For all $g \in G$, $\iota(g) \in \MalcevGroup{G}$ belongs to $1 + F_i \widehat{\Q}[G]$
if and only if $g$ belongs to $1 + F_i \Q[G]$. 
Consider the ``$i$-th dimension subgroup'' with coefficients in $\Q$ of the filtered group $G$, namely
$$
D_i(G):=\{g\in G: \hbox{the image of $g$ in $\Q[G]$ belongs to } 1 + F_i \Q[G]\}.
$$
We can also consider the radical closure of $F_i G$ in $G$:
$$
\sqrt{F_i G} := \{g\in G: \exists k\geq 1, g^k \in F_i G\}.
$$
A classical result by Malcev asserts that $D_i(G) = \sqrt{F_i G}$ 
in the case of the lower central series (see \cite{Passi} for instance),
and it can be generalized to arbitrary filtrations (see \cite{Massuyeau}).  
Thus, the homomorphism $\iota: G \to \MalcevGroup{G}$ induces a group monomorphism
\begin{equation}
\label{eq:monomorphism}
\iota: G/ \sqrt{F_i G} \longrightarrow \MalcevGroup{G}/ F_i\MalcevGroup{G}
\end{equation}
for all $i\geq 1$. 

Next, we observe several properties for the group $\MalcevGroup{G}/ F_i\MalcevGroup{G}$. 
First, it is nilpotent (being the quotient of a group by a term of a filtration).
Second, it is torsion-free: Let $x \in \MalcevGroup{G}$ and $n\geq 1$ be such that
$x^n \in F_i \MalcevGroup{G}$; setting $y:= \log(x)$, 
we deduce that $n y \in F_i \widehat{\Q}[G]$ or, equivalently, that 
$y \in F_i \widehat{\Q}[G]$; we conclude that $x=\exp(y) \in (1+F_i \widehat{\Q}[G])$.
Third, the group $\MalcevGroup{G}/ F_i\MalcevGroup{G}$ is divisible 
since $\MalcevGroup{G}= \exp \MalcevLie{G}$ is. 

Therefore, $\MalcevGroup{G}/ F_i\MalcevGroup{G}$ is a uniquely divisible nilpotent group.
To show that it is the uniquely divisible closure of $G/ \sqrt{F_i G}$
via (\ref{eq:monomorphism}), we prove
\begin{equation}
\label{eq:powering}
\forall x\in\MalcevGroup{G}, \exists k\geq 1,  x^k \in 
\iota(G) \cdot F_i \MalcevGroup{G}
\end{equation}
by induction on $i\geq 1$. 
Thus, we consider an $x\in\MalcevGroup{G}$ satisfying 
that there exists $k\geq 1$, $g\in G$ and $y\in F_{i-1} \MalcevGroup{G}$
such that $x^k= \iota(g) y$. According to Theorem \ref{th:graded_Malcev_Lie_algebra}, the map 
$$
\left(\Gr G\right) \oQ \longrightarrow \Gr \MalcevGroup{G}
$$
defined by $\{h\}\otimes q \mapsto \{\exp\left(q\log(\iota h)\right)\}$ 
for all $h\in F_jG$, $j\geq 1$ and $q\in \Q$, is a bijection.
We deduce that there exists $n\geq 1$ and $h\in G$ such that $y^n= \iota(h)$ modulo $F_i\MalcevGroup{G}$. 
Since $y\in F_{i-1} \MalcevGroup{G}$ is central modulo $F_i\MalcevGroup{G}$,
we have that $x^{kn} = \iota(g)^n y^n =\iota(g^n h)$ modulo $F_i\MalcevGroup{G}$. 
This proves claim (\ref{eq:powering}).

Thus,  (\ref{eq:monomorphism}) induces a  group isomorphism
\begin{equation}
\label{eq:monomorphism_bis}
\iota: \left(G/ \sqrt{F_i G}\right)\oQ \longrightarrow \MalcevGroup{G}/ F_i\MalcevGroup{G}
\end{equation}
defined by $\{x\}^{1/n} \mapsto \{\exp\left(\frac{1}{n}\log(\iota x)\right)\}$ 
for all $x\in G$ and $n\geq 1$. 
Passing to the inverse limit, we obtain an isomorphism
\begin{equation}
\label{eq:final_monomorphism}
\iota: \varprojlim_i \left(G/ F_i G\right)\oQ \simeq 
\varprojlim_i  \left(G/ \sqrt{F_i G}\right)\oQ \longrightarrow 
\varprojlim_i \MalcevGroup{G}/ F_i\MalcevGroup{G} \simeq \MalcevGroup{G}.
\end{equation}
\end{proof}

%
%
%
%
%
%

\bibliographystyle{amsalpha}

\end{document}